\newcommand{\thickhline}{%
    \noalign {\ifnum 0=`}\fi \hrule height 1pt
    \futurelet \reserved@a \@xhline
}
\newcolumntype{"}{@{\hskip\tabcolsep\vrule width 1pt\hskip\tabcolsep}}
\newcommand{\alp}{\mathsf{alph}}
\newcommand{\dM}{\mathbb{M}}
\newcommand{\N}{\mathbb{N}}
\newcommand{\Z}{\mathbb{Z}}
\newcommand{\E}{e}
\newcommand{\NF}{\mathsf{NF}}
\newcommand{\IRR}{\mathsf{IRR}}
\newcommand{\BR}{\mathsf{BR}}
\newcommand{\dG}{G}
\newcommand{\Sol}{\mathsf{sol}}
\newcommand{\magn}{\mathsf{mag}}
\newcommand{\DHB}{H_3(\Z)}
\renewcommand{\mod}{\operatorname{mod}}
\newcommand{\norm}[1]{|\!| #1 |\!|}
\newcommand{\rest}{\mathord\restriction}
\theoremstyle{definition}
\newtheorem{defi}{Definition}[section]
\newtheorem{bem}[defi]{Remark}
\theoremstyle{plain}
\newtheorem{theorem}[defi]{Theorem}
\newtheorem{proposition}[defi]{Proposition}
\newtheorem{lemma}[defi]{Lemma}
\newtheorem{convention}[defi]{Convention}
\begin{document}

\title{Closure properties of knapsack semilinear groups}

\author[M.~Figelius]{Michael Figelius}
\email{figelius@eti.uni-siegen.de}
\author[M.~Lohrey]{Markus Lohrey}
\email{lohrey@eti.uni-siegen.de}
\author[G.~Zetzsche]{Georg Zetzsche}
\email{georg@mpi-sws.org}

\address[Michael~Figelius, Markus~Lohrey]{Universit{\"a}t Siegen, Germany}
\address[Georg~Zetzsche]{Max Planck Institute for Software Systems, Kaiserslautern, Germany}
\thanks{This work has been supported by the DFG research project
	LO 748/12-1}

\begin{abstract}
	A \emph{knapsack equation} in a group $G$ is an equation of the form
	$g_1^{x_1} \cdots g_k^{x_k} = g$ where $g_1,\ldots,g_k,g$ are elements
	of $G$ and $x_1,\ldots,x_k$ are variables that take values in the
	natural numbers. We study the class of groups $G$ for which all
	knapsack equations have effectively semilinear solution sets.  We show
	that the following group constructions preserve effective
	semilinearity: graph products, amalgamated free products with finite
	amalgamated subgroups, HNN-extensions with finite associated subgroups,
	and finite extensions.  Moreover, we study a complexity measure, called
	magnitude, of the resulting semilinear solution sets. More precisely,
	we are interested in the growth of the magnitude in terms of the length
	of the knapsack equation (measured in number of generators).  We
	investigate how this growth changes under the above group operations.
\end{abstract}

\maketitle

\section{Introduction}

The study of algorithmic problems has a long tradition in combinatorial group theory, going back to the work of Dehn
\cite{Dehn11} on the word and conjugacy problem in finitely generated groups. 
Myasnikov, Nikolaev, and Ushakov initiated in  \cite{MyNiUs14} the systematic investigation of a new class of algorithmic problems
that have their origin in discrete optimization problems over the integers. One of these problems is the 
{\em knapsack problem}. Myasnikov et al.~proposed the following definition for the knapsack problem in a 
finitely generated group $G$:
The input is a sequence of group elements $g_1, \ldots, g_k, g \in G$ (specified
by finite words over the generators of $G$) and it is asked whether there exist natural numbers
$x_1, \ldots, x_k \in \mathbb{N}$
such that $g_1^{x_1} \cdots g_k^{x_k} = g$ in $G$. 
For the particular case $G = \mathbb{Z}$  (where the additive notation 
$x_1 \cdot g_1 + \cdots + x_k \cdot g_k = g$ is usually preferred)
this problem  is {\sf NP}-complete 
if the numbers $g_1,\ldots, g_k,g \in \mathbb{Z}$ are given in binary notation \cite{Karp72,Haa11}.\footnote{Karp in his seminal paper \cite{Karp72}
defined knapsack in a slightly different way. {\sf NP}-completeness of the above version was shown in \cite{Haa11}.}
On the other hand, if $g_1,\ldots, g_k,g$ are given in unary notation, then the knapsack problem for the integers was shown to be complete for the circuit
complexity class $\mathsf{TC}^0$ \cite{ElberfeldJT11}. Note that the unary notation for integers corresponds to the case where an integer is given by 
a word over a generating set $\{t,t^{-1}\}$. In one particular case, the knapsack problem was studied for a non-commutative group before
the work of Myasnikov et al.: in \cite{BabaiBCIL96}, it was shown that the
knapsack problem for commutative matrix groups over algebraic number fields can be solved in polynomial time.

Let us give a brief survey of the results that were obtained for the knapsack problem in \cite{MyNiUs14}  and successive papers:
\begin{itemize}
\item Knapsack can be solved in polynomial
time for every hyperbolic group \cite{MyNiUs14}. In \cite{FrenkelNU16} this result was extended to free products of any
finite number of hyperbolic groups and finitely generated abelian groups. A further generalization was obtained
in \cite{LOHREY2019}, where the smallest class of groups that can be obtained from hyperbolic groups using the operations of free 
products and direct products with $\mathbb{Z}$ was considered. It was shown that for every group in this class the knapsack
problem belongs to the complexity class ${\sf LogCFL}$ (a subclass of ${\sf P}$).
\item There are nilpotent groups of class $2$ for which knapsack is undecidable. Examples
are direct products of sufficiently many copies of the discrete Heisenberg group $H_3(\mathbb{Z})$ \cite{KoenigLohreyZetzsche2015a},
and free nilpotent groups of class $2$ and sufficiently high rank \cite{MiTr17}.
\item Knapsack for $H_3(\mathbb{Z})$ is decidable \cite{KoenigLohreyZetzsche2015a}. In particular, 
together with the previous point it follows that decidability
of knapsack is not preserved under direct products.
\item Knapsack is decidable for every co-context-free group~\cite{KoenigLohreyZetzsche2015a}, i.e., groups where the set of all words over the generators that do not represent the identity
is a context-free language. Lehnert and Schweitzer \cite{LehSch07} have shown that the Higman-Thompson groups are
co-context-free.
\item Knapsack belongs to ${\sf NP}$ for all virtually special groups (finite extensions of subgroups
of graph groups) \cite{LohreyZetzsche2016a}. The class of virtually special groups is very rich. It contains all Coxeter groups, 
one-relator groups with torsion, fully residually free groups, and  fundamental groups of hyperbolic 3-manifolds.
For graph groups (also known as right-angled Artin groups) a complete classification of the complexity
of knapsack was obtained in \cite{LohreyZ18}: If the underlying graph contains an induced path or cycle on 4 nodes, then knapsack
is ${\sf NP}$-complete; in all other cases knapsack can be solved in polynomial time (even in {\sf LogCFL}).
\item Knapsack is {\sf NP}-complete for non-abelian free solvable groups \cite{FigeliusGLZ20} and solvable Baumslag-Solitar groups $\mathsf{BS}(1,q)$ \cite{LohreyZ20} with $q>1$. 
For Baumslag-Solitar groups $\mathsf{BS}(p,q)$ with $p \neq 1 \neq q$ and $\mathrm{gcd}(p,q) = 1$, decidability of knapsack was shown in \cite{DudTre18}.
\item Decidability of knapsack is preserved under finite extensions, HNN-exten\-sions over finite associated subgroups 
and amalgamated free products over finite subgroups \cite{LohreyZetzsche2016a}.
\item In \cite{BGZ21}, there is a characterization of those wreath products $G \wr H$ for which the knapsack
problem is decidable. The characterization is in terms of (i)~decidability properties of the groups $G$ and $H$ and (ii)~whether $G$ is abelian.
\end{itemize}

In this work, we initiate the systematic study of solution sets of equations $g_1^{x_1}\cdots g_k^{x_k}=g$, which we call \emph{knapsack equations}.
The \emph{solution set} of this equation is $\{ (n_1, \ldots, n_k) \in \mathbb{N}^k \mid g_1^{n_1} \cdots g_k^{n_k} = g \text{ in } G\}$. In the papers
\cite{LOHREY2019,KoenigLohreyZetzsche2015a,LohreyZ18} it turned out that in many groups the solution
set of every knapsack equation is a {\em semilinear set}. Recall that a subset $S \subseteq \mathbb{N}^k$ is 
semilinear if it is a finite union of linear sets, and a subset $L \subseteq \mathbb{N}^k$ is linear if there are vectors
$v_0, v_1, \ldots, v_\ell \in \mathbb{N}^k$ such that $L = \{ v_0 + \lambda_1 v_1 + \cdots + \lambda_\ell v_\ell \mid \lambda_1, \ldots, \lambda_\ell \in \mathbb{N}\}$.
Semilinear sets play a prominent role in many areas of computer science and mathematics, e.g. in automata theory and logic.
It is known that the class of semilinear sets is closed under Boolean operations and that the semilinear sets are exactly
the sets that are first-order definable in Presburger arithmetic (i.e., the structure $(\mathbb{N},+)$)~\cite{GS66}.

We say that a group is {\em knapsack-semilinear} if for every knapsack equation the set of all solutions is semilinear. 
Note that in any group $G$ the set of solutions on an equation $g_1^x = g$ is periodic and hence semilinear. 
Moreover, every finitely generated abelian group is semilinear (since solution sets of linear equations are Presburger definable).
Nontrivial examples of knapsack-semilinear groups are hyperbolic groups \cite{LOHREY2019}, graph groups \cite{LohreyZ18}, and
co-context free groups \cite{KoenigLohreyZetzsche2015a}.\footnote{Knapsack-semilinearity of co-context free groups is not stated in 
\cite{KoenigLohreyZetzsche2015a} but follows immediately from the proof for the decidability of knapsack.}

Obviously, every finitely generated subgroup of a finitely generated knapsack-semilinear group is knapsack-semilinear as well.
Moreover, it was shown in \cite{GanardiKLZ18} that the class of knapsack-semilinear groups is closed under wreath products.
In this paper we prove the closure of the class of knapsack-semilinear groups under
\begin{itemize}
\item finite extensions,
\item graph products,
\item amalgamated free products with finite amalgamated subgroups, and
\item HNN-extensions with finite associated subgroups.
\end{itemize}
The operation of graph product interpolates between direct products and free products. It is specified
by a finite graph $(V,E)$, where every node $v \in V$ is labelled with a group $G_v$. One takes the free
product of the groups $G_v$ ($v \in V$) modulo the congruence that allows elements from adjacent groups
to commute. Amalgamated free products and HNN-extensions are fundamental constructions in combinatorial 
group theory; see Section~\ref{sec-HNN+amalgamated} for references.

In order to get complexity bounds for the knapsack problem, the sole concept of knapsack-semilinearity is not useful.
For this purpose, we need a quantitative measure for semilinear sets; see also
\cite{chistikov_et_al:LIPIcs:2016:6263}: For a semilinear set 
$$L = \bigcup_{1 \leq i \leq n} \{ v_{i,0} + \lambda_1 v_{i,1} + \cdots + \lambda_{\ell_i} v_{i,\ell_i} \mid \lambda_1, \ldots, \lambda_{\ell_i} \in \mathbb{N}\}$$
we call the tuple of all vectors $v_{i,j}$ a {\em semilinear representation} for $L$. The {\em magnitude} of this semilinear representation is the largest
number that occurs in some of the vectors $v_{i,j}$. Finally, the magnitude of a semilinear set $L$ is the smallest magnitude among all semilinear 
representations of $L$.

Our proofs showing that the above group constructions preserve knapsack-semi\-linearity also yield
upper bounds for the magnitude of solution sets in terms of (i) the total length of the knapsack equation (measured in the total number
of generators) and (ii) the number of variables in the knapsack equation. For this, we introduce a function 
$\mathsf{K}_G(n,m)$ that yields the maximal magnitude of a solution set for a knapsack equation over $G$ of total
length at most $n$ and at most $m$ variables.
Roughly speaking, it turns out 
that finite extensions, amalgamated free products with finite amalgamated subgroups, and
HNN-extensions with finite associated subgroups only lead to a polynomial blowup for the function $\mathsf{K}_G(n,m)$
(actually, this function also depends on the generating set for $G$),
whereas graph products can lead to an exponential blowup. On the other hand, if we bound the number of variables
by a constant, then also graph products only lead to a polynomial blowup for the function $\mathsf{K}_G(n,m)$.

\section{Words, monoids and groups}

Fix a non-empty set $\Sigma$, which is also called an alphabet in the following. Its elements are also called symbols.
 A word over $\Sigma$ is a finite sequence $w = a_1 a_2 \ldots a_n$ of elements $a_1, \ldots, a_n \in \Sigma$.
We write $|w|=n$ for the length $w$ and $\alp(w) = \{ a_1, a_2, \ldots, a_n\}$ for the set of symbols that occur in $w$.
For $a\in \Sigma$, we write $|w|_{a}$ to denote the number of occurrences of $a$ in $w$.
The {\em free monoid} $\Sigma^*$ consists of all finite words over $\Sigma$ and the monoid operation is the concatenation of words.
The concatenation of words $u,v \in \Sigma^*$ is simply denoted with $uv$. The identity element of the free monoid $\Sigma^*$ is the empty 
word, which is usually denoted with $\varepsilon$. Here, we prefer to denote the empty word with $1$ according to the following convention:
 
\begin{convention}
For every monoid $M$ we denote the identity element of $M$ with the symbol $1$; even in cases where we deal with several monoids.
\end{convention}
So intuitively, all monoids that we deal with share the same identity element $1$. This convention will simplify our notations.

For a set $\Omega$ we denote with $F(\Omega)$ the free group generated by $\Omega$. Formally, it can be defined as follows: 
Let $\Omega^{-1} = \{ a^{-1} \mid a \in \Omega\}$
be a disjoint copy of $\Omega$ (the set of formal inverses) and let $\Sigma = \Omega \cup \Omega^{-1}$.
Then the {\em free group} $F(\Omega)$ can be identified with the set of all words $w \in \Sigma^*$ that do not contain a factor of the form 
$a a^{-1}$ or $a^{-1} a$ for $a \in \Omega$ (so called irreducible words). The  product of two irreducible words $u,v$ is the unique irreducible
word obtained from $uv$ by replacing factors of the form $a a^{-1}$ or $a^{-1} a$ ($a \in \Omega$) by the empty word as long as possible.
For a set $R \subseteq \Sigma^*$ of irreducible words (the relators) we denote with $\langle \Omega \mid R \rangle$ the quotient group $F(\Omega)/N$, where
$N$ is the smallest normal subgroup of $F(\Omega)$ that contains $R$. Every group is isomorphic to a group $\langle \Omega \mid R \rangle$. If $\Omega$ is finite,
then $\langle \Omega \mid R \rangle$ is called {\em finitely generated}. In other words: a group $G$ is finitely generated if there exists a finite subset 
$\Sigma \subseteq G$ such that every element of $G$ is a product of 
elements of $\Sigma$. If for every $a \in \Sigma$ also $a^{-1}$ belongs to $\Sigma$ then $\Sigma$ is called a {\em symmetric generating set} for $G$. If $\Sigma$ is a symmetric generating set and $u\in\Sigma^*$ with $u=u_1\cdots u_n$, $u_1,\ldots,u_n\in\Sigma$, then by $u^{-1}$, we denote the word $u_n^{-1}\cdots u_1^{-1}$.

Let $\Sigma$ be an alphabet. An \emph{automaton over $\Sigma$} is a tuple
$\mathcal{A}=(Q,E,q_0,q_f)$, where $Q$ is a finite set of \emph{states},
$E\subseteq Q\times (\Sigma\cup \{1\})\times Q$ is a finite set of
\emph{edges}, $q_0\in Q$ is its \emph{initial state}, and $q_f\in Q$ is its
\emph{final state}. Note that here, $1$ denotes the empty word over $\Sigma$.
If there is an edge $(p,w,q)\in E$, we also denote this by $p\xrightarrow{w}q$.
A word $w\in\Sigma^*$ is \emph{accepted by $\mathcal{A}$} if we can write
$w=a_1\cdots a_n$ with $a_1,\ldots,a_n\in\Sigma\cup\{1\}$ such that there are
states $p_0,\ldots,p_n\in Q$, edges $p_{i-1}\xrightarrow{a_i}p_i$ for $1\le
i\le n$, and $p_0=q_0$ and $p_n=q_f$. By $L(\mathcal{A})$, we denote the set
of all words in $\Sigma^*$ accepted by $\mathcal{A}$, which is also called its
\emph{language}. The \emph{size} of $\mathcal{A}$ is $|Q|$, the number of its
states.

\section{Semilinear sets}

Fix a dimension $d \ge 1$.
For a vector $v=(v_1, \dots , v_d)^{\mathsf{T}}\in \Z^d$ we define its norm $\norm{v}=\max \{ |v_i| \mid 1 \leq i \leq d \}$ and for a 
matrix $M \in \Z^{c \times d}$ with entries $m_{i,j}$ ($1 \le i \le c$, $1 \le j \le d$) we define the norm
$\norm{M} = \max \{|m_{i,j}| \mid 1 \le i \le c, 1 \leq j \leq d \}$. Finally, for a finite set of vectors $A \subseteq \N^d$ let
$\norm{A} = \max \{ \norm{a} \mid a \in A \}$.

We extend the operations of vector addition and multiplication of a vector by a matrix to sets of vectors in the obvious way.
A \emph{linear subset} of $\N^d$ is a set of the form
$$
L= L(b,P) = b + P \cdot \N^k
$$
where $b \in \N^d$ and $P \in \N^{d \times k}$.
We call a set $S\subseteq \N^d$ \emph{semilinear}, if it is a finite union of linear sets. 
The class of semilinear sets is known be closed under Boolean operations, a set is semilinear if and only if it is first-order definable in Presburger arithmetic~\cite{GS66}.

If a semilinear set $S$ is given as a union $\bigcup_{i=1}^k L(b_i,P_i)$, we call the tuple $\mathcal{R} = (b_1, P_1, \ldots, b_k, P_k)$
a \emph{semilinear representation} of $S$.
For a semilinear representation $\mathcal{R} = (b_1, P_1, \ldots, b_k, P_k)$ we define 
$\norm{\mathcal{R}} = \max \{ \norm{b_1}, \norm{P_1} \ldots, \norm{b_k}, \norm{P_k}\}$.
The \emph{magnitude} of a semilinear 
set $S$, $\magn(S)$ for short, 
is the smallest possible value for $\norm{\mathcal{R}}$ among all  semilinear representations $\mathcal{R}$ of $S$.

For a linear set $L(b,P) \subseteq \N^d$
we can assume that all columns of $P$ are different.
Hence, if the magnitude of $L(b,P)$ is bounded by $s$ then 
we can bound the number of columns of $P$ by $(s+1)^d$
(since there are only $(s+1)^d$ vectors in $\N^d$ of norm at most $s$).
No better upper bound is known, but if we allow to split $L(b,P)$ into
several linear sets, we get the following lemma from \cite{EisenbrandS06}:

\begin{lemma}[c.f.~\mbox{\cite[Theorem~1]{EisenbrandS06}}] \label{lemma-number-periods}
Let $L = L(b,P) \subseteq \N^d$ be a linear set of magnitude $s = \magn(L)$.
Then $L = \bigcup_{i\in I} L(b,P_i)$ such that every $P_i$ consists of at most $2 d \log(4ds)$
columns from $P$ (and hence, $\magn(L(b,P_i)) \leq s$).
\end{lemma}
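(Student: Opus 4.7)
The plan is to derive the lemma directly from the integer Carathéodory theorem of Eisenbrand and Shmonin~\cite{EisenbrandS06}, which states that whenever a vector $w$ lies in the integer cone of a finite set $X \subseteq \Z^d$ with $\norm{X} \le M$, it already lies in the integer cone of some sub-multiset $Y \subseteq X$ with $|Y| \le 2d\log(4dM)$.

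First I would reduce to a representation of $L$ that attains its magnitude. Since $s = \magn(L)$, there exists at least one representation $L = L(b,P)$ with $\norm{b} \le s$ and $\norm{P} \le s$; without loss of generality the $b$ and $P$ in the hypothesis satisfy these bounds (this is in any case forced by the conclusion, since each $P_i$ consists of columns of $P$ and we want $\magn(L(b,P_i)) \le s$). This reduction is the only non-bookkeeping step in the argument.

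Second I would apply the Eisenbrand--Shmonin bound with $M = s$ to the columns of $P$. For any $v \in L$, write $v = b + P\lambda$ with $\lambda \in \N^k$; then $v - b$ lies in the integer cone of the columns of $P$, so by the cited theorem it lies in the integer cone of some sub-selection $J$ of columns with $|J| \le 2d\log(4ds)$. Let $I$ index all sub-selections of columns of $P$ of cardinality at most $2d\log(4ds)$ (a finite set) and let $P_i$ denote the corresponding submatrix. Then every $v \in L$ lies in $L(b,P_i)$ for at least one $i \in I$, giving $L \subseteq \bigcup_{i \in I} L(b,P_i)$; the reverse inclusion is immediate from $P_i$ being a sub-selection of $P$. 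The magnitude bound $\magn(L(b,P_i)) \le s$ follows directly from $\norm{P_i} \le \norm{P} \le s$ and $\norm{b} \le s$.

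The main point requiring care is bridging the gap between the Eisenbrand--Shmonin bound (whose logarithm depends on the norm of the columns in the specific representation being used) and the magnitude $s$ of $L$ as a whole (which is a minimum over all representations); the reduction in the first step resolves this cleanly. Beyond that, the argument is a straightforward enumeration over sub-selections of columns, so no further obstacle is expected.
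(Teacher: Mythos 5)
Your proposal is correct and is essentially the paper's own argument: the paper states this lemma as a direct consequence of \cite[Theorem~1]{EisenbrandS06} without further proof, and your derivation (apply the integer Carath\'eodory bound to $v-b$ for each $v\in L$ and group the points by the chosen column subsets, the reverse inclusion being trivial) is exactly the routine bridge being invoked. Your preliminary normalization $\norm{b},\norm{P}\le s$ is also the right reading of the hypothesis, since it is what the parenthetical conclusion presupposes and how the lemma is used in Proposition~\ref{intersection-semilinear-sets}.
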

We also need the following bound on the magnitude for the intersections of semilinear sets:

\begin{proposition}[c.f.~\mbox{\cite[Theorem 4]{Beier}}] \label{intersection-semilinear-sets}
Let $K = L(b_1,P_1)$ and $L = L(b_2,P_2) \subseteq \N^d$ be linear sets of magnitude at most $s \geq 1$.
Then the intersection $K \cap L$ is semilinear and $\magn(K \cap L) \le (12 d^2 \log^2(4ds) d^{d/2} s^{d+1}+1)s 
\le \mathcal{O}(d^{d/2+3} s^{d+3})$.
\end{proposition}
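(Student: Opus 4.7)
The plan is to combine Lemma~\ref{lemma-number-periods} with a ``few-periods'' intersection bound. First, apply Lemma~\ref{lemma-number-periods} separately to $K = L(b_1,P_1)$ and $L = L(b_2,P_2)$ to obtain decompositions
\[
K = \bigcup_{i \in I} L(b_1, P_1^{(i)}), \qquad L = \bigcup_{j \in J} L(b_2, P_2^{(j)}),
\]
where each $P_1^{(i)}$ (resp.\ $P_2^{(j)}$) consists of at most $k := 2d\log(4ds)$ columns of $P_1$ (resp.\ $P_2$) and therefore still has magnitude at most $s$. Since intersection distributes over unions and $\magn(A \cup B) = \max\{\magn(A),\magn(B)\}$, it suffices to bound $\magn\bigl(L(b_1,P_1^{(i)}) \cap L(b_2,P_2^{(j)})\bigr)$ for a single pair $(i,j)$.

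For that single pair I would invoke the cited bound of Beier on the intersection of two linear sets: it gives a semilinear representation whose magnitude is polynomial in $s$, in the dimension $d$, and in the number of periods of the two inputs, with the dominant $s$-dependence being $s^{d+1}$ (this matches the exponent in the claimed bound). Plugging in that both period matrices have at most $k = 2d\log(4ds)$ columns and magnitude at most $s$ yields a bound on the intersection's periods of the form $C \cdot d^{d/2} s^{d+1}$ times a polynomial in $k$; collecting the two $k$ factors produces the $12 d^2 \log^2(4ds)$ contribution. The trailing factor of $s$ and the ``$+1$'' arise from translating by the offset $b_1$, which has magnitude at most $s$, when writing the intersection in the standard $L(b,P)$ form. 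The asymptotic simplification $\mathcal{O}(d^{d/2+3} s^{d+3})$ follows by absorbing the $\log^2(4ds) = \mathcal{O}(s^2)$ factor into the $s$-exponent.

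The main obstacle is really a bookkeeping one: one must track precisely how Beier's bound depends on the number of periods versus the magnitude, so that the reduction via Lemma~\ref{lemma-number-periods} yields exactly the claimed constants. Conceptually, however, the proof is just: reduce to few periods, apply Beier, take the union.
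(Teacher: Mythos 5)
Your proposal matches the paper's proof: it applies Lemma~\ref{lemma-number-periods} to reduce each linear set to pieces with at most $2d\log(4ds)$ periods, distributes the intersection over the resulting unions, and then reads the stated magnitude bound for each pairwise intersection directly off Theorem~4 of \cite{Beier}. The only (harmless) imprecision is the final asymptotic absorption, where one should use $\log^2(4ds)\le\mathcal{O}(ds)$ rather than $\mathcal{O}(s^2)$, which is exactly what the extra factors $d$ and $s$ in $\mathcal{O}(d^{d/2+3}s^{d+3})$ accommodate.
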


\begin{proof}
By Lemma~\ref{lemma-number-periods} we can write $K = \bigcup_{i \in I_1}  L(b_1,P_{1,i})$ and
$L = \bigcup_{i \in I_2}  L(b_2,P_{2,i})$ where every $P_{1,i}$ ($P_{2,i}$) consists of at most 
$2 d \log(4ds)$ columns from $P_1$ ($P_2$). We have $K \cap L = \bigcup_{(i,j) \in I_1\times I_2}  L(b_1,P_{1,i}) \cap L(b_2,P_{2,i})$.
From \cite[Theorem 4]{Beier} we get the upper bound $(12 d^2 \log^2(4ds) d^{d/2} s^{d+1}+1)s$
for the magnitude of each intersection $L(b_1,P_{1,i}) \cap L(b_2,P_{2,i})$.
\end{proof}
In the context of knapsack problems (which we will introduce in the next section), we will consider 
semilinear subsets as sets of mappings $f : \{x_1, \ldots, x_d\} \to \N$ for a finite set of variables
$U = \{x_1, \ldots, x_d\}$. Such a mapping $f$ can be identified with the vector $(f(x_1), \ldots, f(x_d))^{\mathsf{T}}$.
This allows to use all vector operations (e.g. addition and scalar multiplication) on the set
$\N^U$ of all mappings from $U$ to $\N$. The pointwise product $f \cdot g$ of two mappings
$f,g \in \N^U$ is defined by $(f \cdot g)(x) = f(x) \cdot g(x)$ for all $x \in U$. Moreover, for mappings
$f \in \N^U$, $g \in \N^V$ with $U \cap V = \emptyset$ we define $f \oplus g \in \N^{U \cup V}$
by $(f \oplus g)(x) = f(x)$ for $x \in U$ and  $(f \oplus g)(y) = g(y)$ for $y \in V$. All operations on
$\N^U$ will be extended to subsets of $\N^U$ in the standard pointwise way.
Note that $\magn(K \oplus L) \le \max\{ \magn(K), \magn(L)\}$ for
semilinear sets $K,L$. If $L \subseteq \N^U$ is semilinear and $V \subseteq U$ then we denote with
$L\rest_{V}$ the semilinear set $\{ f\rest_{V} \mid f \in L \}$ obtained by restricting every function $f \in L$ to the 
subset $V$ of its domain.
Clearly, $L \rest_{V}$ is semilinear too and $\magn(L\rest_{V}) \le \magn(L)$.

\section{Knapsack and exponent equations}

Let $G$ be a finitely generated group with the finite symmetric generating set $\Sigma$.
Moreover, let $X$ be a set of formal variables that take values
from $\N$. For a subset $U\subseteq X$, we call a mapping $\sigma : U \to \N$ a \emph{valuation} for $U$.
An \emph{exponent expression} over $\Sigma$ is a formal expression of the form 
$\E = u_1^{x_1} v_1 u_2^{x_2} v_2  \cdots u_k^{x_k} v_k$
with $k \geq 1$, words $u_i, v_i \in \Sigma^*$ and variables $x_1, \ldots, x_k \in X$. 
Here, we allow $x_i = x_j$ for $i \neq j$.
The words $u_i$ are called the {\em periods} of $\E$, and we can assume that $u_i \neq 1$ for all $1 \le i \le k$.
If every variable in an exponent expression occurs at most once, it is called a \emph{knapsack expression}. 
Let $X_\E= \{ x_1, \ldots, x_k \}$ be the set of variables that occur in $\E$.
 For a valuation $\sigma : U \to \N$ such that $X_\E \subseteq U$ (in which case we also say
 that $\sigma$ is a valuation for $\E$), we define 
 $\sigma(\E) = u_1^{\sigma(x_1)} v_1 u_2^{\sigma(x_2)} v_2  \cdots u_k^{\sigma(x_k)} v_k \in \Sigma^*$.
We say that $\sigma$ is a \emph{$G$-solution} of the equation $\E=1$ if $\sigma(\E)$ evaluates to the identity element $1$ of $G$.
With $\Sol_G(\E)$ we denote the set of all $G$-solutions $\sigma : X_\E \to \N$ of $\E$. We can view $\Sol_G(\E)$ as a subset
of $\N^k$.
The \emph{length} of $\E$ is defined as $\norm{\E} =\sum_{i=1}^k |u_i|+|v_i|$, whereas $k \le \norm{\E}$ is its \emph{degree}, $\deg(\E)$ for short.
We define {\em solvability of exponent equations over $G$} as the following decision problem:
\begin{description}
\item[Input] A finite list of exponent expressions $\E_1,\ldots,\E_n$ over $G$.
\item[Question] Is $\bigcap_{i=1}^n \Sol_G(\E_i)$ non-empty?
\end{description}
The {\em knapsack problem for $G$} is the following 
decision problem:
\begin{description}
\item[Input] A single knapsack expression $\E$ over $G$.
\item[Question] Is $\Sol_G(\E)$ non-empty?
\end{description}
It is easy to observe that the concrete choice of the generating set $\Sigma$ has no influence
on the decidability and complexity status of these problems.

One could also allow exponent expressions of the form $\E = v_0 u_1^{x_1} v_1 u_2^{x_2} v_2  \cdots u_k^{x_k} v_k$.
However, since then $\Sol_G(\E) = \Sol_G(u_1^{x_1} v_1 u_2^{x_2} v_2  \cdots u_k^{x_k} v_kv_0)$, this would result in the same class of solution sets. Moreover, we could also restrict to exponent expressions of the form 
$\E = u_1^{x_1} u_2^{x_2} \cdots u_k^{x_k} v$: for $\E = u_1^{x_1} v_1 u_2^{x_2} v_2   \cdots u_k^{x_k} v_k$
and $\E' = u_1^{x_1} (v_1 u_2 v_1^{-1})^{x_2} (v_1v_2 u_3 v_2^{-1} v_1^{-1})^{x_3} \cdots (v_1\cdots v_{k-1} u_3 v_{k-1}^{-1} \cdots v_1^{-1})^{x_k}
v_1 \cdots v_{k-1} v_k$
we have $\Sol_G(\E) = \Sol_G(\E')$.

\subsection{Knapsack-semilinear groups}

The group $G$ is called {\em knapsack-semilinear} if for every knapsack expression $\E$ over $\Sigma$,
the set $\Sol_G(\E)$ is a semilinear set of vectors and a semilinear representation can be effectively computed from $\E$.
This implies that for every exponent expression $\E$ over $\Sigma$,
the set $\Sol_G(\E)$ is semilinear as well and a semilinear representation can be effectively computed from $\E$.
To see this, consider an exponent expression $\E = u_1^{x_1} v_1 u_2^{x_2} v_2  \cdots u_k^{x_k} v_k$ over $\Sigma$.
Choose pairwise different variables $y_1, y_2, \ldots, y_k$ such that $X_{\E} = \{ x_1, \ldots, x_k \} \subseteq \{ y_1, \ldots, y_k \}$
and consider the knapsack expression
$\E' = u_1^{y_1} v_1 u_2^{y_2} v_2  \cdots u_k^{y_k} v_k$. Moreover, define the equivalence relation 
$R = \{ (i,j) \mid 1 \le i,j \le k, x_i = x_j \}$. We get
$$
\Sol_G(\E) = (\Sol_G(\E') \cap \{ \sigma \mid \sigma : \{ y_1, \ldots, y_k \} \to \N, \forall (i,j) \in R : \sigma(y_i) = \sigma(y_j) \})\rest_{X_{\E}}.
$$
Since semilinear sets are effectively closed under intersection and restriction, the effective semilinearity of 
$\Sol_G(\E')$ yields the effective semilinearity of  $\Sol_G(\E)$.

Also notice that solvability of exponent equations is decidable for every knapsack-semilinear group.
As mentioned in the introduction, the class of knapsack-semilinear groups is very rich.
An example of a group $G$, where knapsack is decidable but solvability of exponent equations
is undecidable is the Heisenberg group $\DHB$ 
(which consists of all upper triangular $(3 \times 3)$-matrices over the integers, where all diagonal entries
are $1$), see \cite{KoenigLohreyZetzsche2015a}. In particular, $\DHB$ is not knapsack-semilinear in a strong sense:
there are knapsack expressions $\E$ such that $\Sol_{\DHB}(\E)$ is not semilinear.

\begin{bem}
	The requirement that the semilinear representation of the solution set
	can be computed effectively is important: There are groups where every
	knapsack equation has a semilinear solution set, but the semilinear
	representation cannot be computed. For example, consider a finitely
	generated torsion group $G$ with an undecidable word
	problem~\cite{Adian2010}. Then every knapsack expression over $G$ has a
	semilinear solution set.  However, computing a semilinear
	representation for $\{n\in\N\mid u^n=1\}$ for a given word $u$ would
	allow us to check whether $u=1$ in $G$.
\end{bem}

For a knapsack-semilinear group $G$ and a finite generating set $\Sigma$ for $G$ we define two growth functions. For $n,m \in \N$ with $m \leq n$ let
$\mathsf{Exp}(n,m)$ be the finite set of all exponent expressions $\E$ over $\Sigma$ such that
(i) $\Sol_G(\E) \neq \emptyset$, (ii) $\norm{\E} \leq n$ and (iii) $\deg(\E) \leq m$.
Moreover, let $\mathsf{Knap}(n,m) \subseteq \mathsf{Exp}(n,m)$ be the set of all knapsack
expressions in $\mathsf{Exp}(n,m)$.
We define the mappings $\mathsf{E}_{G,\Sigma} : \{(n,m) \mid m,n \in \N, m \leq n\} \rightarrow \N$ and $\mathsf{K}_{G,\Sigma} :  \{(n,m) \mid m,n \in \N, m \leq n\} \rightarrow \N$ as follows:
\begin{itemize}
\item $\mathsf{E}_{G,\Sigma}(n,m) = \max \{ \magn(\Sol_G(\E)) \mid \E \in \mathsf{Exp}(n,m) \}$,
\item $\mathsf{K}_{G,\Sigma}(n,m) = \max \{ \magn(\Sol_G(\E)) \mid \E \in \mathsf{Knap}(n,m) \}$.
\end{itemize}
Clearly, if $\Sol_G(\E) \neq \emptyset$  and $\magn(\Sol_G(\E)) \le N$ then $\E$ has a $G$-solution $\sigma$
such that $\sigma(x) \leq N$ for all variables $x\in X_\E$. Therefore, if $G$ has a decidable word problem and
we have a computable bound on the function $\mathsf{E}_{G,\Sigma}$ then we obtain a nondeterministic  
algorithm for solvability of exponent equations over $G$:
given an exponent expression $\E$ we can guess $\sigma : X_\E \to \N$ with $\sigma(x) \le N$ for all variables
$x$ and then verify (using an algorithm for the word problem), whether $\sigma$ is indeed a solution.

Let  $\Sigma$ and $\Sigma'$ be two generating sets for the group $G$. Then there is a constant $c$ such that
$\mathsf{E}_{G,\Sigma}(n,m) \le \mathsf{E}_{G,\Sigma'}(cn,m)$ and 
$\mathsf{K}_{G,\Sigma}(n,m) \le \mathsf{K}_{G,\Sigma'}(cn,m)$. To see this, note that 
for every $a\in \Sigma$ there is a word $w_a\in (\Sigma')^*$ such that $a$ and $w_a$ are representing the same element in $G$. 
Then we can choose $c=\max \{ |w_a| \mid a\in \Sigma\}$.

\section{Part 1: Exponent equations in graph products} \label{sec-part1}

In this section we introduce graph products of groups (Section~\ref{subsect:graph products}) and show that
every graph product of knapsack-semilinear groups is again knapsack-semilinear (Section~\ref{sec-main}).
Our definition of graph products is based on trace monoids (also known as partially commutative monoids), which
we discuss first. 

\subsection{Trace monoids} \label{sec-traces}

In the following we introduce some notions from trace theory, see
\cite{Die90lncs,DieRoz95} for more details.
An {\em independence alphabet} is an undirected graph  $(\Sigma,\mathrel{I})$
(without loops). Thus, $\mathrel{I}$ is a symmetric and irreflexive
relation on $\Sigma$. The set $\Sigma$ may be infinite. 
The \emph{trace monoid} $\dM(\Sigma,I)$ is defined as the quotient $\dM(\Sigma,I)=\Sigma^*/\{ab=ba\mid (a,b)\in I\}$ with
concatenation as operation and the empty trace $1$ as the neutral
element. Its elements are called
\emph{traces}. We denote by $[w]_{\mathrel{I}}$ the trace represented by the word
$w\in \Sigma^*$. Let $\alp([w]_{\mathrel{I}}) = \alp(w)$ and $|[w]_{\mathrel{I}}| = |w|$.
The {\em dependence alphabet} associated with $(\Sigma,I)$ is 
$(\Sigma,D)$, where $D=(\Sigma\times \Sigma)\setminus I$. 
Note that the relation $D$ is reflexive.
For $a\in \Sigma$ let $I(a)=\{b\in\Sigma\mid (a,b)\in I\}$ be the letters that commute with $a$.
For traces $u,v\in \dM(\Sigma,I)$ we denote with
$u \mathrel{I} v$
the fact that $\alp(u)\times \alp(v)\subseteq I$. The trace $u$ is {\em connected}
if we cannot write $u = v w$ in $\dM(\Sigma,I)$ such that $v \neq 1 \neq w$
and $v \mathrel{I} w$.

An {\em independence clique} is a subset 
$\Delta \subseteq \Sigma$ such that $(a,b) \in I$ for all $a,b \in \Delta$
with $a \neq b$. A {\em finite}
independence clique $\Delta$ is identified with the trace
$[a_1 a_2 \cdots a_n]_{\mathrel{I}}$, where $a_1, a_2, \ldots, a_n$
is an arbitrary enumeration of $\Delta$. 

The following lemma, which is known as Levi's lemma, is one of the most fundamental 
facts for trace monoids, see e.g.~\cite{DieRoz95}. 
\begin{lemma} \label{lemma-levi}
Let $u_1, \ldots, u_m, v_1, \ldots, v_n\in \dM(\Sigma,I)$. Then 
\[ u_1u_2 \cdots u_m = v_1 v_2 \cdots  v_n\] if and only if
there exist $w_{i,j} \in \dM(\Sigma,I)$ $(1 \leq i \leq m$, $1 \leq j \leq n)$ such that
\begin{itemize}
\item $u_i = w_{i,1}w_{i,2}\cdots w_{i,n}$ for every $1 \leq i \leq m$,
\item $v_j = w_{1,j}w_{2,j}\cdots w_{m,j}$ for every $1 \leq j \leq n$, and
\item $(w_{i,j}, w_{k,\ell})\in I$ if $1 \leq i < k \leq m$ and $n \geq j > \ell \geq 1$.
\end{itemize}
\end{lemma}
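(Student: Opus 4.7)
The plan is to treat the two directions separately. The ``if'' direction is a routine rearrangement: reading the grid of $w_{i,j}$'s row by row gives $u_1 \cdots u_m = \prod_{i=1}^m \prod_{j=1}^n w_{i,j}$, while reading it column by column gives $v_1 \cdots v_n = \prod_{j=1}^n \prod_{i=1}^m w_{i,j}$. The two orderings of these $mn$ factors differ by a finite sequence of adjacent swaps of the form $w_{i,j}\, w_{k,\ell} \leftrightarrow w_{k,\ell}\, w_{i,j}$ with $i<k$ and $j>\ell$, each justified by the hypothesized independence condition.

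The substance lies in the ``only if'' direction, which I would argue via Mazurkiewicz's dependence-graph representation of traces. Recall that a trace $t$ is encoded by a finite vertex-labeled DAG whose vertices are the letter-occurrences in any representative word, and whose edges go from $a$ to $b$ iff $a$ precedes $b$ in some representative and their labels are $D$-related. Under this encoding, concatenation of traces corresponds to disjoint union of the underlying graphs together with all $D$-edges from the left component to the right one; in particular, for two occurrences with $D$-related labels the order is an invariant of the trace.

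Given $t = u_1 \cdots u_m = v_1 \cdots v_n$, the vertex set of the dependence graph of $t$ then admits two decompositions, into $A_1 \sqcup \cdots \sqcup A_m$ from the first factorization and into $B_1 \sqcup \cdots \sqcup B_n$ from the second. I would set $W_{i,j} := A_i \cap B_j$ and define $w_{i,j}$ as the trace represented by the induced sub-DAG on $W_{i,j}$. The row-identity $u_i = w_{i,1} \cdots w_{i,n}$ should then follow from the observation that any $D$-edge inside $A_i$ joining a vertex in $W_{i,j}$ to one in $W_{i,k}$ is forced by the $v$-ordering on the whole graph to go from the smaller to the larger index; the column-identity is symmetric. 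For the independence clause, if $i<k$, $j>\ell$, and $a \in W_{i,j}$, $b \in W_{k,\ell}$ had dependent labels, then the $u$-decomposition would force $a$ to precede $b$ while the $v$-decomposition would force $b$ to precede $a$, a contradiction; hence every letter of $w_{i,j}$ is $I$-related to every letter of $w_{k,\ell}$.

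The main obstacle I anticipate is the bookkeeping for the dependence-graph machinery: one must verify that the induced sub-DAG on an arbitrary block $A_i \cap B_j$ really is the dependence graph of a trace (it is, but this uses that both $A_i$ and $B_j$ are order-convex in the partial order of $t$) and that concatenating the row-blocks $W_{i,1}, \ldots, W_{i,n}$ reconstitutes the dependence graph of $u_i$. A more elementary alternative that sidesteps the Mazurkiewicz formalism is a double induction on $m+n$, handling the base case $m=n=2$ directly by chasing commutations in representative words and then peeling off one row or column; this is less conceptual but requires no graph-theoretic preparation.
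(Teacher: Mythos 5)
Your plan is essentially correct, but note that the paper does not prove this lemma at all: it is quoted as a known fact (Levi's lemma for traces) with a citation to the literature, so there is no in-paper argument to compare against. What you sketch is the standard proof one finds in the trace-theory literature. The ``if'' direction is exactly the observation that the row-major and column-major orderings of the $mn$ blocks differ only on pairs $(i,j),(k,\ell)$ with $i<k$, $j>\ell$, which the hypothesis makes independent. For the ``only if'' direction your dependence-graph argument goes through; the only point worth tightening is the implicit identification of occurrences: to intersect the block $A_i$ coming from the factorization $u_1\cdots u_m$ with the block $B_j$ coming from $v_1\cdots v_n$, you should fix the canonical vertex set of the dependence graph of $t$ (e.g.\ pairs $(a,k)$ meaning the $k$-th occurrence of the letter $a$, which is invariant under the commutation relation), so that both partitions live on the same set. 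Given that, defining $w_{i,j}$ as the trace induced on $A_i\cap B_j$ works; any induced labeled subgraph of a dependence graph is again a dependence graph, so the order-convexity worry you raise is not actually needed there---convexity only enters, implicitly, in the fact that the graph induced on $A_i$ is the dependence graph of $u_i$, which follows directly from the description of concatenation as disjoint union plus all $D$-edges from left to right. Your independence clause argument (a dependent pair would be forced into opposite orders by the two factorizations) is exactly right, and the elementary double induction on $m+n$ you mention is a well-known alternative route.
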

\noindent
The situation in the lemma will be visualized by a 
diagram of the following kind. The $i$-th column
corresponds to $u_i$, the $j$-th row
corresponds to $v_j$, and the intersection of the $i$-th
column and the $j$-th row represents $w_{i,j}$.
Furthermore $w_{i,j}$ and $w_{k,\ell}$ are independent
if one of them is left-above the other one. 

\medskip

\begin{center}
  \begin{tabular}{c"c|c|c|c|c|}\hline
  $v_n$  & $w_{1,n}$ & $w_{2,n}$ & $w_{3,n}$ & \dots  & $w_{m,n}$ \\ \hline
  \vdots & \vdots    & \vdots    & \vdots    & \vdots & \vdots    \\ \hline
  $v_3$  & $w_{1,3}$ & $w_{2,3}$ & $w_{3,3}$ & \dots  & $w_{m,3}$ \\ \hline
  $v_2$  & $w_{1,2}$ & $w_{2,2}$ & $w_{3,2}$ & \dots  & $w_{m,2}$ \\ \hline
  $v_1$  & $w_{1,1}$ & $w_{2,1}$ & $w_{3,1}$ & \dots  & $w_{m,1}$ \\ \thickhline
         & $u_1$     & $u_2$     & $u_3$     & \dots  & $u_m$
  \end{tabular} 
\end{center}

\medskip
\noindent
A consequence of Levi's lemma is that
trace monoids are cancellative, i.e., $usv=utv$ implies $s=t$ for all
traces $s,t,u,v\in\dM(\Sigma,I)$. 

A \emph{trace rewriting system} $R$ over $\mathbb{M}(\Sigma,I)$ is
just a subset of $\mathbb{M}(\Sigma,I) \times
\mathbb{M}(\Sigma,I)$ \cite{Die90lncs}. We define the
\emph{one-step rewrite relation} $\to_R \;\subseteq
\mathbb{M}(\Sigma,I) \times \mathbb{M}(\Sigma,I)$ by: $x \to_R y$ if
and only if there are $u,v \in \mathbb{M}(\Sigma,I)$ and $(\ell,r) \in
R$ such that $x = u\ell v$ and $y = u r v$. With $\xrightarrow{*}_R$
we denote the reflexive transitive closure of $\rightarrow_R$. The
notion of a confluent and terminating trace rewriting system is
defined as for other types of rewriting systems \cite{BoOt93}: A trace
rewriting system $R$ is called \emph{confluent} if for all $u,v,v'\in
\mathbb{M}(\Sigma,I)$ with $u \xrightarrow{*}_R v$ and
$u\xrightarrow{*}_R v'$ there exists a trace $w$ with $v
\xrightarrow{*}_R w$ and $v'\xrightarrow{*}_R w$.  It is called
\emph{terminating} if there does not exist an infinite chain
$u_0\rightarrow_R u_1 \rightarrow_R u_2 \cdots$.  A trace $u$ is
\emph{$R$-irreducible} if no trace $v$ with $u \to_R v$ exists. The
set of all $R$-irreducible traces is denoted
with $\IRR(R)$. If $R$ is terminating and confluent, then for every
trace $u$, there exists a unique \emph{normal form} $\NF_R(u) \in
\IRR(R)$ such that $u \xrightarrow{*}_R \NF_R(u)$~\cite{Huet1980}.

\subsection{Graph products}
\label{subsect:graph products}

Let us fix a {\em finite} 
independence alphabet $(\Gamma,E)$ 
and finitely generated groups ${\dG_i}$ for $i\in \Gamma$. 
Let $\alpha$ be the size of a largest clique of the independence alphabet $(\Gamma, E)$.
As usual $1$ is the identity element for each of the groups $\dG_i$.
Let $\Sigma_i$ be a finite and symmetric generating set of $\dG_i$ such that 
$\Sigma_i \cap \Sigma_j = \emptyset$ for $i\neq j$.
We define a (possibly infinite) 
independence alphabet as in \cite{DiLo08IJAC,KuLo05ijac}: Let
$$
A_i={\dG_i}\setminus \{1\} \quad\text{and}\quad A =
\bigcup_{i \in \Gamma} A_i.
$$ 
We assume that $A_i \cap A_j = \emptyset$ for $i\neq j$. We fix the independence relation 
$$
I =  \bigcup_{(i,j)\in E}A_i\times A_j
$$ 
on $A$. The independence alphabet $(A,I)$ is the only independence alphabet in this paper which may be infinite.
Recall that for a trace $t \in \dM(A,I)$, $\alp(t) \subseteq A$ is the set of symbols that occur in $t$.
We define the {\em $\Gamma$-alphabet} of $t$ as 
$$\alp_\Gamma(t) = \{ i \in \Gamma \mid \alp(t) \cap A_i \neq \emptyset \}.
$$
Note that whether $u \mathrel{I} v$ (for $u,v \in \dM(A,I)$) only depends on $\alp_{\Gamma}(u)$ and $\alp_{\Gamma}(v)$.

Every independence clique of $(A,I)$ 
has size at most $\alpha$ and hence can be identified with a trace from $\dM(A,I)$. 
Let $C_1$ and $C_2$ be independence cliques. We say that 
$C_1$ and $C_2$ are \emph{compatible}, if $\alp_{\Gamma}(C_1) = \alp_{\Gamma}(C_2)$.
In this case we can write $C_1 = \{ a_1, \ldots, a_m \}$ and $C_2 = \{ b_1, \ldots, b_m \}$
for some $m \leq \alpha$ such that for every $1 \leq i \leq m$ there exists $j_i \in \Gamma$ with $a_i, b_i \in A_{j_i}$. 
Let $c_i = a_i b_i$ in  the group $\dG_{j_i}$. If $c_i \neq 1$ for all $1 \leq i \leq m$, then $C_1$ and $C_2$ are 
{\em strongly compatible}. In this case we define the independence clique $C_1 C_2 = \{ c_1, \ldots, c_m\}$.
Note that $\alp_{\Gamma}(C_1) = \alp_{\Gamma}(C_2) = \alp_{\Gamma}(C_1C_2)$.

We will work with traces $t \in \dM(A,I)$. For such a trace we need two length measures. The ordinary length of $t$ is $|t|$ as defined in
Section~\ref{sec-traces}: If $t = [a_1 \cdots a_k]_{\mathrel{I}}$ with $a_j \in A$ then $|t|=k$.
On the other hand, if we deal with computational problems, we need a finitary representations of the elements $a_j$.
Assume that $a_j \in A_{i_j}$. Then, $a_j$ can be written as a word over the alphabet $\Sigma_{i_j}$. Let $n_j=\norm{a_j}$
denote the length of a shortest word over $\Sigma_{i_j}$ that evaluates to $a_j$ in the group $\dG_{i_j}$; 
this is also called the {\em geodesic length}
of the group element $a_j$. Then we define 
$\norm{t} = n_1 + n_2 + \cdots + n_k$.

A trace $a \in A$ (i.e., a generator of $\dM(A,I)$) is also called {\em atomic}, or an {\em atom}.
For an atom $a \in A$ that belongs to the group $\dG_i$, we write $a^{-1}$ for the inverse of $a$ in $\dG_i$;
it is again an atom.
On $\dM(A,I)$ we define the trace rewriting system
\begin{equation}\label{system-R}
R=\bigcup_{i\in \Gamma} \bigg(\{([aa^{-1}]_{\mathrel{I}},1)\mid a\in A_i\} \cup
\{([ab]_{\mathrel{I}},[c]_{\mathrel{I}})\mid a,b,c\in A_i,ab=c\;\mathrm{in}\;\dG_i\} \bigg).
\end{equation}
The following lemma was shown in \cite{KuLo05ijac}:

\begin{lemma}\label{R-confluent}
The trace rewriting system $R$ is confluent.
\end{lemma}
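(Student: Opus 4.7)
The plan is to combine termination with Newman's lemma and then analyze overlaps via Levi's lemma (Lemma~\ref{lemma-levi}). Termination is immediate: every rule of $R$ replaces a left-hand side of length $2$ by a right-hand side of length at most $1$, so $|u| > |v|$ whenever $u \to_R v$. It therefore suffices to verify local confluence.

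For local confluence, consider $u \to_R v_1$ via $(\ell_1, r_1)$ and $u \to_R v_2$ via $(\ell_2, r_2)$ with $u = p_1 \ell_1 q_1 = p_2 \ell_2 q_2$. Applying Levi's lemma to this equality, viewed as two three-factor products, yields a $3 \times 3$ grid of subtraces $(w_{i,j})_{1 \le i,j \le 3}$ with $\ell_1 = w_{2,1} w_{2,2} w_{2,3}$, $\ell_2 = w_{1,2} w_{2,2} w_{3,2}$, and with $w_{i,j} I w_{k,\ell}$ whenever $i < k$ and $j > \ell$. Each $\ell_k$ has the form $[a_k b_k]_I$ with $a_k, b_k \in A_{n_k}$ for some $n_k \in \Gamma$, so every factor of $\ell_k$ consists only of atoms from $A_{n_k}$.

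The argument then splits into two cases. If $n_1 = n_2 = n$, then all $w_{i,j}$ contributing to $\ell_1$ or $\ell_2$ lie in $A_n^*$, and since distinct elements of $A_n$ are pairwise dependent (because $(n,n) \notin E$), the setting collapses to an ordinary word factorization in $A_n^*$. Local confluence then reduces to that of the standard semi-Thue presentation $\{aa^{-1} \to 1\} \cup \{ab \to c \mid ab = c \neq 1 \text{ in } \dG_n\}$ of $\dG_n$ on $A_n^*$, a routine critical-pair computation that closes using associativity of multiplication in $\dG_n$. If $n_1 \neq n_2$, then $w_{2,2}$ would consist simultaneously of atoms from $A_{n_1}$ and $A_{n_2}$; but $A_{n_1} \cap A_{n_2} = \emptyset$, forcing $w_{2,2} = 1$, so $\ell_1$ and $\ell_2$ involve four pairwise distinct atom-occurrences of $u$. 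Whenever both redexes split non-trivially across two cells, the Levi independencies $w_{2,1} I w_{1,2}$ and $w_{2,3} I w_{3,2}$ force an atom of $A_{n_1}$ to commute with one of $A_{n_2}$, so $(n_1, n_2) \in E$ and therefore $\ell_1 I \ell_2$ holds globally; one can then re-factor $u = s \ell_1 \ell_2 t = s \ell_2 \ell_1 t$, and both $v_1$ and $v_2$ reduce in one further step to the common trace $s r_1 r_2 t$. The main technical obstacle is precisely this case $n_1 \neq n_2$, since one must also treat the sub-cases in which one redex fits into a single cell of the Levi grid (where the two redexes are trivially disjoint as subtraces) and extract the global commutation $\ell_1 I \ell_2$ from the local Levi independencies together with the structural fact that the independence relation on $A$ lifts the one on $\Gamma$.
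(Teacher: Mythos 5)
First, note that the paper does not prove this lemma at all: it is quoted from \cite{KuLo05ijac}, so there is no in-paper argument to compare yours against and your proposal has to stand on its own. Its skeleton is the standard one and is sound: termination by length reduction, Newman's lemma, and an analysis of two redex occurrences via Lemma~\ref{lemma-levi}, with the key structural input that independence in $(A,I)$ is determined at the level of $\Gamma$ (so a trace independent of one letter of $A_i$ is independent of all of $A_i$, and of anything the corresponding right-hand side can produce). Your case $n_1\neq n_2$ is essentially correct: $w_{2,2}=1$ is forced, and when both left-hand sides split across two cells the Levi independencies do give $(n_1,n_2)\in E$, hence $\ell_1 I \ell_2$; moreover the flanking cells $w_{1,3}$ and $w_{3,1}$ are independent of a letter of each of $A_{n_1}$ and $A_{n_2}$ by Levi, hence of both groups entirely, and this is what actually lets you re-factor $u=s\ell_1\ell_2 t$ (the commutation $\ell_1 I \ell_2$ alone would not suffice). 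In the single-cell sub-cases one does not get, and does not need, $\ell_1 I \ell_2$; what is needed there is the routine disjoint-redex commutation, again powered by the lifting property.

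The genuine soft spot is the case $n_1=n_2=n$. The claim that the situation ``collapses to an ordinary word factorization in $A_n^*$'' is not justified as stated: the two redexes sit inside a trace whose remaining cells may contain letters of other groups interleaved between and around them, so $u$ is not a word over $A_n$ near the redexes. In the overlapping sub-case ($w_{2,2}\neq 1$) the collapse is true but needs an argument: for instance with $\ell_1=ab$, $\ell_2=bc$, Levi only gives $w_{3,1} I b$, and one must invoke the lifting property to conclude $w_{3,1} I a$, $w_{3,1} I c$, $w_{3,1} I r_1$, $w_{3,1} I r_2$ and push $w_{3,1}$ (and similarly $w_{1,3}$) out of the way before the semi-Thue critical-pair computation, which you correctly close via associativity in $\dG_n$, applies. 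Moreover, the disjoint sub-case with $n_1=n_2$ (where $w_{2,2}=1$ and, necessarily, at least one redex lies in a single Levi cell) is covered by neither of your cases as delimited: there is no critical pair to compute, and your second case explicitly assumes $n_1\neq n_2$. The repair is exactly the disjoint-redex commutation you sketch for the other case, so no step is actually false --- but as written the case split has a hole, and Case~1 asserts the key rearrangement instead of proving it.
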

Since $R$ is length-reducing, it is also terminating and hence defines 
unique normal forms. We define the {\em graph product} 
$\dG(\Gamma,E,({\dG_i})_{i\in \Gamma})$ as the quotient monoid 
$$\dG(\Gamma,E,({\dG_i})_{i\in \Gamma}) = \dM(A,I)/R.$$
Here we identify $R$ with the smallest congruence relation on $\dM(A,I)$ that contains 
all pairs from $R$.
In the rest of Section~\ref{sec-part1}, we write $\dG$ for $\dG(\Gamma,E,({\dG_i})_{i\in \Gamma})$.
It is easy to see that $\dG$ is a group. The \emph{inverse} of a trace $t= [a_1 a_2 \cdots a_k]_{\mathrel{I}} \in \dM(A,I)$ with $a_i \in A$ is the trace 
$t^{-1} = [a_k^{-1} \cdots a_2^{-1} a_1^{-1}]_{\mathrel{I}}$.
Note that $t$ is well defined: If $[a_1 a_2 \cdots a_k]_{\mathrel{I}}  = [b_1 b_2 \cdots b_k]_{\mathrel{I}}$ then $[a_k^{-1} \cdots a_2^{-1} a_1^{-1}]_{\mathrel{I}} = [b_k^{-1} \cdots b_2^{-1} b_1^{-1}]_{\mathrel{I}}$.
We can apply this notation also to an independence clique $C$ of $(A,I)$ which yields the independence clique 
$C^{-1} = \{ a^{-1} \mid a \in C\}$.

Note that $\dG$ is finitely generated by $\Sigma = \bigcup_{i\in \Gamma}\Sigma_i$.
If $E = \emptyset$, then $\dG$ is the free product of the groups $\dG_i$ ($i \in \Gamma$) and if $(\Gamma,E)$ is a complete graph,
then $\dG$ is the direct product of the groups $\dG_i$ ($i \in \Gamma$). In this sense, the graph product
construction generalizes free and direct products.

For traces $u,v \in \dM(A,I)$ or words $u,v \in \Sigma^*$ we write $u =_{\dG} v$ if $u$ and $v$
represent the same element of the group $\dG$. In this case, we also say that $u=v$ \emph{in $\dG$}.
The following lemma is important for solving the word problem in
a graph product $\dG$:

\begin{lemma}\label{rewriting=product}
Let $u,v\in \dM(A,I)$. Then
$u=_{\dG} v$ if and only if $\NF_R(u)=\NF_R(v)$. In particular
we have $u=_{\dG} 1$ if and only if $\NF_R(u)=1$.
\end{lemma}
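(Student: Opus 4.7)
The plan is to reduce the statement to the classical Church--Rosser fact for abstract rewriting systems: whenever a rewrite relation is both confluent and terminating, two objects are equivalent under the symmetric-transitive closure of the one-step reduction if and only if they have the same normal form. Confluence of $R$ is exactly Lemma~\ref{R-confluent}, and termination is immediate since each rule in $R$ strictly reduces the length $|\cdot|$ of a trace. Hence unique normal forms $\NF_R(\cdot)$ exist, as already noted in Section~\ref{sec-traces}.

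First I would identify $=_\dG$ with the reflexive-symmetric-transitive closure of $\to_R$, which I denote by $\leftrightarrow_R^{*}$. Because $\to_R$ is defined via arbitrary contexts ($u\ell v \to_R u r v$), the relation $\to_R$ is already compatible with left and right multiplication in $\dM(A,I)$, so $\leftrightarrow_R^{*}$ is a congruence on $\dM(A,I)$. It obviously contains each pair $(\ell,r)\in R$ (take the trivial context), and any congruence that contains $R$ must contain $\leftrightarrow_R^{*}$. Therefore $\leftrightarrow_R^{*}$ is precisely the smallest congruence on $\dM(A,I)$ generated by the pairs in $R$, i.e., the kernel used to define $\dG = \dM(A,I)/R$. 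Consequently, $u =_{\dG} v$ iff $u \leftrightarrow_R^{*} v$.

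Next I would run the standard Church--Rosser argument in both directions. The implication ``$\NF_R(u) = \NF_R(v) \Rightarrow u \leftrightarrow_R^{*} v$'' is immediate: $u \xrightarrow{*}_R \NF_R(u) = \NF_R(v) {}_R\xleftarrow{*} v$. For the converse I would take any zigzag $u = t_0 \leftrightarrow_R t_1 \leftrightarrow_R \cdots \leftrightarrow_R t_n = v$ and prove by induction on $n$ that $\NF_R(t_i)$ is independent of $i$: at each single step $t_{i-1} \leftrightarrow_R t_i$ one trace reduces to the other, so applying confluence to the common $R$-ancestor forces $\NF_R(t_{i-1}) = \NF_R(t_i)$.

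The ``in particular'' claim then follows at once: the empty trace $1$ is $R$-irreducible, since no left-hand side of a rule in $R$ equals $1$, so $\NF_R(1) = 1$, and $u =_\dG 1$ iff $\NF_R(u) = \NF_R(1) = 1$. I do not anticipate any genuine obstacle; the only point deserving care is making the translation from the group-theoretic equality $=_\dG$ to the rewriting equivalence $\leftrightarrow_R^{*}$ explicit, after which confluence (Lemma~\ref{R-confluent}) together with the length-reducing character of $R$ finishes the proof.
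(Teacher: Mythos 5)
Your proposal is correct and follows essentially the same route as the paper: the paper's proof also identifies $=_{\dG}$ with congruence modulo $R$ and then invokes the fact that the confluent, terminating system $R$ assigns a unique normal form to each congruence class, merely compressing the Church--Rosser zigzag argument you spell out into a single sentence. Your added details (the identification of $=_{\dG}$ with $\leftrightarrow_R^{*}$ and the induction over the zigzag) are exactly the content the paper leaves implicit.
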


\begin{proof}
The if-direction is trivial. Let on the other hand $u,v\in \dM(A,I)$ and
suppose that $u=v$ in $\dG$. By definition this is the case if and only if
$u$ and $v$ represent the same element from $\dM(A,I)/R$ and
are hence congruent with respect to $R$. Since $R$ produces a normal form for elements from
the same congruence class, this implies that $\NF_R(u)=\NF_R(v)$.
\end{proof}

Graph products of copies of $\mathbb{Z}$ are also known as \emph{graph groups} or \emph{right-angled Artin groups}.
Graph products of copies of  $\mathbb{Z}/2\mathbb{Z}$ are known
as {\em right-angled Coxeter groups},  see \cite{Dro2003} for more details. 

For the rest of the paper we fix the graph product 
$\dG = \dG(\Gamma,E,({\dG_i})_{i\in \Gamma})$. Moreover, $\Sigma_i$, $A_i$ ($i \in \Gamma$), $\Sigma$, $A$, $I$, and $R$
will have the meaning defined in this section.

\subsection{Results from \cite{LohreyZ18}}

In this section we state a small modification of results from \cite{LohreyZ18},
where the statements are made for finitely generated trace monoids $\dM(\Sigma,I)$.
We need the corresponding statements for the non-finitely generated trace monoid
$\dM(A,I)$ from Section~\ref{subsect:graph products}. The proofs are exactly the same
as in \cite{LohreyZ18}, one only has to argue with the $\Gamma$-alphabet $\alp_{\Gamma}(t)$ instead
the alphabet $\alp(t)$ of traces.

Note that all statements in this section refer to the trace monoid $\dM(A,I)$ and not
to the corresponding graph product $\dG$. In particular, when we write a product
$t_1 t_2 \cdots t_n$ of traces $t_i \in \dM(A,I)$ no cancellation occurs between the $t_i$.
We will also consider the case that $E=\emptyset$ (and hence $I=\emptyset$), in which case 
$\dM(A,I) = A^*$.

Let $s,t \in \dM(A,I)$ be traces. We say that $s$ is a \emph{prefix} of $t$ if
there is a trace $r\in \dM(A,I)$ with $sr=t$. Moreover, we define $\rho(t)$ as
the number of prefixes of $t$.  We will use the following statement from
\cite{BeMaSa89}.

\begin{lemma} \label{lemma-prefixes}
Let $t \in \dM(A,I)$ be a trace of length $n$. Then $\rho(t) \le \mathcal{O}(n^\alpha) \le \mathcal{O}(n^{|\Gamma|})$, where $\alpha$
is the size of a largest clique of the independence alphabet $(\Gamma, E)$.
\end{lemma}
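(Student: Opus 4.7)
The plan is to identify prefixes of $t$ with order ideals of the dependence graph of $t$ and then bound the number of such ideals via Dilworth's theorem, using that the width of the dependence graph is controlled by $\alpha$.

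First I would fix a linearization $t = [a_1 a_2 \cdots a_n]_I$ with $a_j \in A$ and define the poset $\pi_t$ on positions $\{1, \ldots, n\}$ by $i <_t j$ iff $i < j$ and $(a_i, a_j) \in D$, closed transitively. The standard Mazurkiewicz correspondence (a direct application of Levi's lemma, Lemma \ref{lemma-levi}) shows that prefixes of $t$ are in bijection with the order ideals (downward-closed subsets) of $\pi_t$: an ideal $J$ corresponds to the trace read off positions in $J$ in any compatible linear order, and different prefixes yield different ideals.

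Next I would bound the width of $\pi_t$ by $\alpha$. An antichain in $\pi_t$ is a set of positions whose letters pairwise commute, i.e., pairwise lie in $I$. Since $I \cap (A_i \times A_i) = \emptyset$ for every $i \in \Gamma$ (the independence alphabet $(\Gamma,E)$ has no loops), the letters of an antichain lie in pairwise distinct components $A_{i_1}, \ldots, A_{i_r}$, and the indices $i_1, \ldots, i_r$ must form a clique in $(\Gamma, E)$. Hence $r \le \alpha$.

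Then I would invoke Dilworth's theorem to decompose $\pi_t$ into at most $\alpha$ chains $C_1, \ldots, C_\alpha$, padding with empty chains if needed. Every order ideal $J$ is determined by the tuple $(J \cap C_1, \ldots, J \cap C_\alpha)$, and since $J \cap C_k$ is itself a downward-closed subset of the totally ordered chain $C_k$, it is one of at most $|C_k| + 1 \le n+1$ possibilities. Therefore the total number of ideals, and hence $\rho(t)$, is at most $(n+1)^\alpha \le \mathcal{O}(n^\alpha)$. The inequality $\alpha \le |\Gamma|$ is immediate because a clique of $(\Gamma, E)$ has at most $|\Gamma|$ vertices. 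The only delicate step is the prefix-to-ideal bijection in the non-finitely-generated trace monoid $\dM(A,I)$, but this is purely combinatorial and depends only on Levi's lemma, which is already available in this setting; everything else is a clean application of Dilworth's theorem.
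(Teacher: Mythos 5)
Your argument is correct. Note, however, that the paper does not prove this lemma at all: it is quoted from Bertoni, Mauri and Sabadini \cite{BeMaSa89}, and the only point the authors make is that the bound carries over to the non-finitely generated alphabet $A$ because independence of traces depends only on their $\Gamma$-alphabets. Your proof supplies exactly the missing content, and it is essentially the standard one: prefixes of $t$ correspond bijectively to order ideals of the (finite) dependence poset on the $n$ positions of $t$ (the injectivity direction uses that equally labelled positions are dependent, since $I$ is irreflexive and $I\cap(A_i\times A_i)=\emptyset$, so an ideal is determined by the Parikh data of the prefix -- worth one explicit sentence); an antichain consists of pairwise independent letters, hence letters from groups $A_{i_1},\ldots,A_{i_r}$ with $\{i_1,\ldots,i_r\}$ a clique of $(\Gamma,E)$, so the width is at most $\alpha$; Dilworth then gives at most $(n+1)^\alpha$ ideals. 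What your route buys, compared with the bare citation, is that it makes completely transparent why only the clique number of $(\Gamma,E)$ enters, even though $(A,I)$ may be infinite and its own independence cliques are bounded by $\alpha$ only through the $\Gamma$-structure; this is precisely the adaptation the paper asserts without detail. The inequality $\alpha\le|\Gamma|$ is indeed immediate.
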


\begin{bem}
It is easy to see that $\rho(t)=n+1$ if $E=\emptyset$.
\end{bem}

\begin{lemma} \label{lemma-simplify-factorization-power}
Let $u  \in \dM(A,I) \setminus \{1\}$ be a connected trace and $m \in \mathbb{N}$, $m \geq 2$. 
Then, for all $x \in \mathbb{N}$ and traces $y_1, \ldots, y_m$ the following two statements are equivalent:
\begin{enumerate}[(i)]
\item $u^x = y_1 y_2 \cdots y_m$. 
\item  There exist traces $p_{i,j}$ $(1 \leq j < i \leq m)$, $s_i$ $(1 \leq i \leq m)$  and
numbers $x_i, c_j \in \mathbb{N}$ $(1 \leq i \leq m$, $1 \leq j \leq m-1)$
such that:
\begin{itemize}
\item $y_i = (\prod_{j=1}^{i-1} p_{i,j})  u^{x_i} s_i$ for all $1 \leq i \leq m$,
\item $p_{i,j} \mathrel{I} p_{k,\ell}$ if $j < \ell < k < i$ and  $p_{i,j} \mathrel{I} (u^{x_k} s_k)$ if $j <  k < i$,\footnote{Note 
that since $\alp(p_{i,j}) \subseteq \alp(u)$,
we must have $p_{i,j}=1$ or $x_k=0$ whenever $j <  k < i$.}
\item $s_m = 1$ and for all $1 \leq j < m$, $s_j \prod_{i=j+1}^m p_{i,j} = u^{c_j}$,
\item $c_j \leq |\Gamma|$ for all $1 \leq j \leq m-1$,
\item $x = \sum_{i=1}^m x_i + \sum_{i=1}^{m-1} c_i$.
\end{itemize}
\end{enumerate}
\end{lemma}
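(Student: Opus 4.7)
The proof splits into two implications. The easy direction (ii)~$\Rightarrow$~(i) proceeds by direct rearrangement. Using the independence conditions from the third bullet, each factor $p_{i,j}$ (which sits at the left of $y_i$) slides leftwards past all factors strictly between positions $j$ and $i$ until it lands immediately after $s_j$. After all such slides, $y_1 y_2 \cdots y_m$ rearranges to
\[
u^{x_1}\Bigl(s_1 \prod_{i>1} p_{i,1}\Bigr) u^{x_2}\Bigl(s_2 \prod_{i>2} p_{i,2}\Bigr) \cdots u^{x_{m-1}}\bigl(s_{m-1}\, p_{m,m-1}\bigr)\, u^{x_m}.
\]
The fourth bullet replaces each parenthesised block by $u^{c_j}$, and summing exponents yields $u^{\sum_i x_i + \sum_j c_j} = u^x$.

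For the substantive direction (i)~$\Rightarrow$~(ii), I plan to apply Lemma~\ref{lemma-levi} to the identity $u \cdot u \cdots u = y_1 y_2 \cdots y_m$ (with $x$ copies of $u$ on the left), obtaining a grid $w_{i,j}$ ($1 \le i \le x$, $1 \le j \le m$) with $u = w_{i,1} \cdots w_{i,m}$, $y_j = w_{1,j} \cdots w_{x,j}$, and $w_{i,j} \mathbin{I} w_{k,\ell}$ whenever $i < k$ and $j > \ell$. Classify each column $i$ by its support $J_i = \{ j : w_{i,j} \ne 1\}$: the column is \emph{whole} if $|J_i| = 1$ (so $w_{i,j}=u$ for the unique $j\in J_i$) and \emph{split} otherwise. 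Let $x_j$ be the number of whole columns with $J_i = \{j\}$. For each split column $i$ with $\min J_i = j_0$, assign its first piece $w_{i,j_0}$ to the construction of $s_{j_0}$ and each later piece $w_{i,j}$ ($j > j_0$, $j \in J_i$) to $p_{j,j_0}$, concatenating contributions from different columns in order of increasing column index. The third bullet then translates directly from the Levi independence, and $s_j \prod_{i>j} p_{i,j} = u^{c_j}$ (with $c_j$ the number of split columns starting in row $j$) follows by reassembling the factorizations $u = w_{i,1} \cdots w_{i,m}$ of those columns, the necessary reorderings being supplied by Levi's independence relation on north-east/south-west pairs.

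Two points remain delicate. First, to obtain $y_j = (\prod_{j'<j} p_{j,j'}) u^{x_j} s_j$, one must verify that within $y_j = w_{1,j}\cdots w_{x,j}$ the continuation pieces (from columns with $\min J_i < j$), the whole $u$-blocks, and the starting pieces (from columns with $\min J_i = j$) can indeed be grouped in the required order; here a careful choice of Levi grid, exploiting both the freedom to reorder the identical copies of $u$ on the left and the connectedness of $u$, is needed. Second, and this is the main obstacle, one must prove the bound $c_j \le |\Gamma|$. If $c_j$ copies of $u$ all have their first nontrivial piece in $y_j$, then for any two such copies $i_r < i_{r'}$ Levi's independence forces the entire tail of copy $i_r$ in rows $>j$ to commute with the prefix of copy $i_{r'}$ sitting in row $j$. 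Connectedness of $u$ severely constrains such prefix--suffix pairs: each additional straddling copy must witness a new letter of $\alp_\Gamma(u) \subseteq \Gamma$ participating in the independence pattern, so that the number of simultaneously straddling copies is bounded by $|\Gamma|$. Making this dimensional extraction precise is the crux of the argument, and this is where connectedness of $u$ enters essentially.
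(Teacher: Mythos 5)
Your overall route for (i)$\Rightarrow$(ii) --- apply Lemma~\ref{lemma-levi} to $u\cdot u\cdots u=y_1\cdots y_m$, classify columns of the Levi grid as whole or split, and read off $x_i$, $s_i$, $p_{i,j}$ from whole columns, starting pieces and continuation pieces --- is exactly the intended argument (the paper does not reprove the lemma but defers to Lemma~6 of \cite{LohreyZ18}, whose proof is this argument with $\alp$ replaced by $\alp_\Gamma$), and your direction (ii)$\Rightarrow$(i) is fine. However, there is a genuine gap: the bound $c_j\le|\Gamma|$ is never proved. You offer only the heuristic that each additional straddling copy must ``witness a new letter'' and you explicitly concede that making this precise is unresolved. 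This bound is the entire quantitative content of the lemma and the only place where connectedness of $u$ enters (for non-connected $u=ab$ with $a I b$ it fails: $u^x=a^x\cdot b^x$ makes all $x$ copies split columns starting in row $1$), so the proof is incomplete as it stands. The missing argument is short: list the split columns starting in row $j$ as $1,\dots,c_j$ in column order, and for the $a$-th of them write $u=P_aQ_a$, where $P_a\neq 1$ is its row-$j$ piece and $Q_a\neq 1$ is the product of its pieces in rows $>j$. Levi's independence gives $Q_a I P_b$ for $a<b$, hence $\alp_\Gamma(Q_a)\cap\alp_\Gamma(P_b)=\emptyset$ and therefore $\alp_\Gamma(Q_a)\subseteq\alp_\Gamma(Q_b)$ (since $\alp_\Gamma(P_b)\cup\alp_\Gamma(Q_b)=\alp_\Gamma(u)$). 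The inclusion is strict: if $\alp_\Gamma(Q_a)=\alp_\Gamma(Q_b)$ then, because independence of traces depends only on their $\Gamma$-alphabets, $Q_b I P_b$, so $u=P_bQ_b$ would be a decomposition into independent nonempty parts, contradicting connectedness. A strictly increasing chain of nonempty subsets of $\alp_\Gamma(u)$ has length at most $|\alp_\Gamma(u)|\le|\Gamma|$, which gives $c_j\le|\Gamma|$.

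Your first ``delicate point'' (grouping each row as continuation pieces, then whole copies, then starting pieces) in fact needs neither a careful choice of the Levi grid nor connectedness; it follows from Levi's independence together with the trivial observation that a nonempty piece (whose alphabet lies in $\alp(u)$) can never be independent of a full copy of $u$, since every letter is dependent on itself. For instance, if a column $c_1<c_2$ were whole in row $i$ while column $c_2$ had a nonempty piece in a row $j<i$, Levi would force $u I w_{c_2,j}$, which is impossible; the same observation shows that the starting rows of the columns are weakly increasing in the column index. These two facts give exactly the required order of the $p_{i,j}$ inside $y_i$ and all the stated independences, and the identity $s_j\prod_{i>j}p_{i,j}=u^{c_j}$ is then just the converse direction of Lemma~\ref{lemma-levi} applied to the $c_j$ split columns. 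So that point is routine; the unproved bound $c_j\le|\Gamma|$ is the real gap in your proposal.
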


Note that this implies $\alp_\Gamma(p_{i,j}) \cup \alp_\Gamma(s_i) \subseteq \alp_\Gamma(u)$ for $1\le j<i\le m$.

The proof of Lemma~\ref{lemma-simplify-factorization-power} is the same as for \cite[Lemma~3.3]{LohreyZ18}, where the statement is shown for the case
of a finite independence alphabet $(A,I)$. In our situation the independency between traces only depends on
their $\Gamma$-alphabets. This allows to carry over the proof of \cite[Lemma~3.3]{LohreyZ18}
to our situation by replacing the alphabet $\alp(t)$ of a trace $t \in \dM(A,I)$ by 
$\alp_{\Gamma}(u)$.

\begin{bem} \label{remark-simplify-factorization-power}
In Section~\ref{sec-main}
we will apply Lemma~\ref{lemma-simplify-factorization-power} in order to replace an equation $u^x = y_1 y_2 \cdots y_m$
(where $x,y_1, \ldots, y_m$ are variables and $u$ is a concrete connected trace) by an equivalent disjunction. Note that the length of all factors $p_{i,j}$ and $s_i$ in Lemma~\ref{lemma-simplify-factorization-power}
is bounded by $|\Gamma| \cdot |u|$ and that $p_{i,j}$ and $s_i$ only contain symbols from $u$.
Hence, one can guess these traces as well 
as the numbers $c_j \leq |\Gamma|$  (the guess results in a disjunction). We can also 
guess which of the numbers $x_i$ are zero and which are greater than zero (let $K$ consists of those
$i$ such that $x_i>0$). After these guesses we can verify the independencies
$p_{i,j} \mathrel{I} p_{k,\ell}$ ($j < \ell < k < i$) and  $p_{i,j} \mathrel{I} (u^{x_k} s_k)$ ($j <  k < i$), and the identities
$s_m = 1$, $s_j \prod_{i=j+1}^m p_{i,j} = u^{c_j}$ ($1 \leq j < m$). If one of them does not hold, the specific guess does
not contribute to the disjunction. In this way,
we can replace the equation $u^x = y_1 y_2 \cdots y_m$ by a disjunction of formulas of the form
$$
\exists x_i > 0 \; (i \in K):
x = \sum_{i\in K} x_i + c \wedge \bigwedge_{i \in K} y_i = p_i  u^{x_i} s_i  \wedge \bigwedge_{i \in [1,m] \setminus K}  y_i = p_i s_i ,
$$
where $K \subseteq [1,m]$,
$c \leq |\Gamma| \cdot (m-1)$ and the $p_i, s_i$ are concrete traces of length 
at most  $|\Gamma| \cdot (m-1) \cdot |u|$. The number of disjuncts in the disjunction will not be important for our purpose.
\end{bem}

\begin{lemma} \label{lemma-connected-star}
Let $p,q,u,v,s,t \in \dM(A,I)$ with $u\neq 1$ and $v \neq 1$ connected.  Let $m = \max\{ \rho(p), \rho(q), \rho(s), \rho(t) \}$
and $n = \max \{ \rho(u), \rho(v) \}$.
Then the set 
$$L(p,u,s,q,v,t) = \{ (x,y) \in \mathbb{N} \times \mathbb{N} \mid p u^x s = q v^y t \}$$
is a union of 
$\mathcal{O}(m^8 \cdot n^{4 |\Gamma|})$ many
linear sets of the form $\{ (a+bz, c+dz) \mid z \in \mathbb{N} \}$ with
$a,b,c,d \le \mathcal{O}(m^8 \cdot n^{4 |\Gamma|})$.  In particular, $L(p,u,s,q,v,t)$ is semilinear.
If $|\Gamma|$ is a fixed constant, then a  semilinear representation for $L(p,u,s,q,v,t)$ can be computed
in polynomial time.
\end{lemma}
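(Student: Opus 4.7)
The plan is to apply Levi's lemma (Lemma~\ref{lemma-levi}) to the trace equation $pu^xs = qv^yt$ by viewing both sides as three-factor products $(p)(u^x)(s)$ and $(q)(v^y)(t)$. This yields a $3 \times 3$ matrix of traces $w_{i,j}$ ($1 \leq i,j \leq 3$) whose rows reconstruct $p, u^x, s$ and whose columns reconstruct $q, v^y, t$, subject to the independence conditions of the lemma. Enumerating all ways to split each of $p, q, s, t$ into a consistent three-factor product is bounded by $\rho(p)^2\rho(q)^2\rho(s)^2\rho(t)^2 \leq m^8$ choices, and this enumeration fixes every $w_{i,j}$ except $w_{2,2}$. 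After this step, the problem reduces to finding $(x,y)$ and a single remaining trace $w_{2,2}$ satisfying the two equations $u^x = w_{2,1} w_{2,2} w_{2,3}$ and $v^y = w_{1,2} w_{2,2} w_{3,2}$, together with the required independencies.

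To each of these two single-variable equations I would then apply Lemma~\ref{lemma-simplify-factorization-power} with $m=3$. By Remark~\ref{remark-simplify-factorization-power}, the $u$-equation yields a disjunction in which $w_{2,2} = p' u^{x'} s'$ for concrete traces $p', s'$ of length at most $|\Gamma|\cdot|u|$, together with a shift $x = x' + c'$ for some $c' \leq \mathcal{O}(|\Gamma|)$. Since the admissible $p', s'$ arise as pieces of $u^{c}$ for $c \leq |\Gamma|$, the number of branches is bounded by $\rho(u^{|\Gamma|})^2 \leq \mathcal{O}(n^{2|\Gamma|})$ using Lemma~\ref{lemma-prefixes}. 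The same analysis applied to $v^y$ produces $w_{2,2} = p''v^{y'}s''$, a shift $y = y'+c''$, and $\mathcal{O}(n^{2|\Gamma|})$ further branches. Equating the two expressions for $w_{2,2}$ gives the base equation $p' u^{x'} s' = p'' v^{y'} s''$ with short fixed contexts and unknowns $x', y'$.

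The main obstacle is solving this base equation. Because $u, v$ are both connected and nontrivial, I would argue that either both $x', y'$ are large, in which case the long central overlap inside $w_{2,2}$ is simultaneously periodic with respect to $u$ and $v$ and forces $u$ and $v$ to share a common root (so the set of solutions is a single linear set $(x', y') = (\alpha z+\beta,\; z)$ with $\alpha, \beta$ polynomial in $|u|,|v|$); or else one of $x', y'$ is bounded by a polynomial in $|u|+|v|$, so by Levi's lemma applied once more to the short equation only finitely many $(x',y')$ remain, each giving a degenerate linear set. In either case the branch contributes a linear set of the form $\{(a+bz, c+dz)\mid z\in\N\}$ with $a,b,c,d$ polynomial in $m$ and $n$; undoing the shifts $x = x'+c'$, $y = y'+c''$ then gives the stated form for $(x,y)$. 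Summing over the $\mathcal{O}(m^8\cdot n^{4|\Gamma|})$ branches yields both the claimed count and magnitude bound, and when $|\Gamma|$ is constant the whole enumeration runs in polynomial time.
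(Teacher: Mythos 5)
Your outer reduction is fine as far as it goes: applying Lemma~\ref{lemma-levi} to the factorizations $(p)(u^x)(s)$ and $(q)(v^y)(t)$, enumerating the at most $m^8$ compatible splittings of $p,q,s,t$ (which indeed pins down every $w_{i,j}$ except $w_{2,2}$), and then invoking Lemma~\ref{lemma-simplify-factorization-power} and Remark~\ref{remark-simplify-factorization-power} to bring the two residual constraints into the form $w_{2,2}=p'u^{x'}s'$ and $w_{2,2}=p''v^{y'}s''$ is a legitimate normalization (one inaccuracy: the shift $c'$ in $x=x'+c'$ also contains the exponents of $u$ absorbed into the concrete factors $w_{2,1},w_{2,3}$, so it is $\mathcal{O}(m+|\Gamma|)$ rather than $\mathcal{O}(|\Gamma|)$; this is harmless for the magnitude but should be stated).

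The genuine gap is your last paragraph. The ``base equation'' $p'u^{x'}s'=p''v^{y'}s''$ is itself an instance of the set $L(p',u,s',p'',v,s'')$ the lemma is about, so the reduction has only shortened the contexts; the whole content of the lemma is the claim that this two-variable equation has a solution set which is a union of few linear sets of the special form with small coefficients, and that is exactly what your large/small dichotomy asserts without proof. Moreover, as stated the dichotomy is wrong already in the free monoid: a long simultaneous overlap of the $u$-periodic and $v$-periodic portions yields, via Fine--Wilf, that the primitive roots of $u$ and $v$ are \emph{conjugate}, not that $u$ and $v$ share a common root, and one must still extract the linear parametrization of $(x',y')$ and bound $a,b,c,d$. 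For traces the corresponding periodicity statement is exactly where connectedness is needed and requires a nontrivial argument; this is the machinery of \cite[Lemma~11]{LohreyZ18} (and the lemmas leading to it), which is what the present paper imports verbatim, only replacing $\alp$ by $\alp_\Gamma$ and $|A|$ by $|\Gamma|$. A secondary bookkeeping problem: in your ``one exponent is small'' case each branch contributes up to polynomially many (in $n$) singleton sets, so the claimed total of $\mathcal{O}(m^8\cdot n^{4|\Gamma|})$ linear sets does not follow simply by summing over branches. As it stands, the core of the lemma remains unproved.
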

Again, the proof of Lemma~\ref{lemma-connected-star} is exactly the same as the proof of \cite[Lemma 3.8]{LohreyZ18}. 
One simply substitutes $|A|$ by $|\Gamma|$ and $\alp(x)$ by $\alp_{\Gamma}(x)$.

\begin{bem} \label{rem-2dim-words}
Let us consider again the case $E = I = \emptyset$ in Lemma~\ref{lemma-connected-star}.
Let $m = \max\{ |p|, |q|, |s|, |t|, |u|, |v| \}$. We can construct an automaton accepting $p u^* s$ of size at most $3m$
and similarly for $q v^* t$. Hence, we obtain an automaton of size $\mathcal{O}(m^2)$ accepting the language $L = p u^* s \cap q v^* t$.
We are only interested in the length of words from $L$. Let $\mathcal{A}$ be the automaton obtained
from the automaton for $L$ by replacing every transition label by the symbol $a$. The resulting automaton
$\mathcal{A}$ is defined over a unary alphabet. Let $P = \{ n \mid a^n \in L(\mathcal{A}) \}$.
By \cite[Theorem~1]{To09ipl}, the set $P$ can be written as a union 
$$
P = \bigcup_{i=1}^r \{  b_i + c_i \cdot z \mid z \in \mathbb{N} \}
$$
with $r \in \mathcal{O}(m^4)$ and $b_i, c_i \in \mathcal{O}(m^4)$. 
For every $1 \leq i \leq r$ and $z \in \mathbb{N}$ there must exist a pair $(x,y) \in \mathbb{N} \times \mathbb{N}$
such that 
$$
b_i + c_i \cdot z = |ps| + |u| \cdot x = |qt| + |v| \cdot y.
$$
In particular, $b_i \geq |ps|$, $b_i \geq |qt|$, $|u|$ divides $b_i-|ps|$ and $c_i$,
and $|v|$ divides $b_i-|qt|$ and $c_i$.
We get
\begin{multline*}
L(p,u,s,q,v,t) =  \bigcup_{i=1}^r \bigg\{ \bigg( \frac{b_i-|ps|}{|u|} + \frac{c_i}{|u|} \cdot z, \frac{b_i-|qt|}{|v|} + \frac{c_i}{|v|} \cdot z \bigg) \; \bigg| \bigg. \; z \in \mathbb{N}\bigg\}
\end{multline*}
and all numbers that appear on the right-hand side are bounded by $\mathcal{O}(m^4)$.
\end{bem}

\subsection{Irreducible powers in graph products}

In this section, we study powers $u^n$ for an irreducible trace $u \in \IRR(R)$.
We need the following definitions:
A trace $u \in \dM(A,I)$ is called \emph{cyclically reduced} if 
$u \in \IRR(R)$ and there do not exist $a \in A$ and $v \in \dM(A,I)$ such that $u = a v a^{-1}$.
A trace $t \in \dM(A,I)$ is called {\em well-behaved} if it is connected and $t^m \in \IRR(R)$ 
for every $m \geq 0$. 

\begin{lemma} \label{lemma-square}
Let $u \in \IRR(R)$. If $u^2 \in \IRR(R)$ then $u^m \in \IRR(R)$ for all $m \ge 0$.
\end{lemma}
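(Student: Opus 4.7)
The plan is to handle the trivial cases $m \leq 2$ separately (they follow from the hypotheses, with $u^0 = 1 \in \IRR(R)$ trivially). For $m \ge 3$ I would argue by contradiction: suppose $u^m \notin \IRR(R)$. Inspecting the rewriting system~\eqref{system-R}, every left-hand side has the form $[ab]_I$ with $a, b \in A_j$ for some $j \in \Gamma$, so there exist $s, t \in \dM(A,I)$ with
\[ u^m \;=\; s \cdot [ab]_I \cdot t. \]

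The main tool is Levi's lemma (Lemma~\ref{lemma-levi}) applied to the two factorizations $u^m = u \cdot u \cdots u$ (as an $m$-fold product) and $u^m = s \cdot [a]_I \cdot [b]_I \cdot t$ (as a $4$-fold product). Levi produces pieces $w_{i,\ell}$ for $1 \le i \le m$ and $1 \le \ell \le 4$. Since $\prod_i w_{i,2} = [a]_I$ and $\prod_i w_{i,3} = [b]_I$ are single atoms, there are unique indices $p,q$ with $w_{p,2} = [a]_I$ and $w_{q,3} = [b]_I$, and all other entries in rows $2$ and $3$ are $1$. The independence condition of Levi applied to these two entries, together with $(j,j) \notin E$ (so $a, b \in A_j$ are not $I$-independent), forces $p \le q$.

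I then split on $q - p$. If $p = q$, then copy $p$ of $u$ equals $w_{p,1} \cdot [ab]_I \cdot w_{p,4}$, contradicting $u \in \IRR(R)$. If $q = p+1$, the independencies $(w_{p,4}, w_{q,1})$, $(w_{p,2}, w_{q,1})$, $(w_{p,4}, w_{q,3}) \in I$ supplied by Levi allow the rewriting
\[ u \cdot u \;=\; w_{p,1} \, [a]_I \, w_{p,4} \, w_{q,1} \, [b]_I \, w_{q,4} \;=\; w_{p,1} w_{q,1} \cdot [ab]_I \cdot w_{p,4} w_{q,4}, \]
so $u^2$ is $R$-reducible, contradicting the hypothesis. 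If $q - p \ge 2$, pick any intermediate copy $r$ with $p < r < q$; then $w_{r,2} = w_{r,3} = 1$, so $u = w_{r,1} w_{r,4}$, and Levi supplies $(w_{p,2}, w_{r,1}), (w_{r,4}, w_{q,3}) \in I$. Every atom of $u$ is thus $I$-independent of both $a$ and $b$ and therefore lies outside $A_j$; but $a \in A_j$ is itself an atom of $u$ (occurring in copy $p$), a contradiction.

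The delicate case is $q = p+1$, where the $R$-redex witnessed in the long product $u^m$ must be exhibited inside the shorter product $u^2$. Levi's lemma is precisely what enables this: the three specific independencies it grants let us move the intervening pieces $w_{p,4}$ and $w_{q,1}$ past the central atoms, after which the cancellativity of $\dM(A,I)$ delivers the factorization of $u^2$ that contradicts the hypothesis.
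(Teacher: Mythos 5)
Your proof is correct and rests on the same core argument as the paper's: apply Levi's lemma to $u^m$ read both as the $m$-fold product of $u$ and as the four-factor product $s\cdot a\cdot b\cdot t$ around the redex, locate the columns $p\le q$ carrying $a$ and $b$, and use the independencies supplied by Levi to commute the intervening pieces and exhibit the factor $ab$ inside $u$ (case $p=q$) or inside $u^2$. The organizational difference is that the paper takes $m$ minimal with $u^{m-1}\in\IRR(R)$ and $u^m\notin\IRR(R)$, which forces $a$ into the first column and $b$ into the last, whereas you avoid minimality and case-split on $q-p$, killing the case $q-p\ge 2$ by an alphabet argument (an intermediate copy of $u$ has no letter in $A_j$, yet $a\in A_j$ occurs in $u$). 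That extra case is actually superfluous: the independencies $(w_{p,4},w_{q,1})$, $(a,w_{q,1})$, $(w_{p,4},b)\in I$ are granted by Levi for every $p<q$, not just $q=p+1$, so your computation $u^2=w_{p,1}w_{q,1}\,ab\,w_{p,4}w_{q,4}$ goes through verbatim whenever $p<q$ --- which is in essence what the paper does with columns $1$ and $m$. One small wording point: cancellativity of $\dM(A,I)$ is not needed at the end, since each column of the Levi diagram is equal to $u$ outright, giving the two factorizations $u=w_{p,1}aw_{p,4}=w_{q,1}bw_{q,4}$ directly.
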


\begin{proof}
Assume that $m\ge 3$ is the smallest number, such that $u^{m-1} \in \IRR(R)$ and $u^m \not\in \IRR(R)$. Hence we can write $u^m=xaby$ with $x,y \in \IRR(R)$ and $a,b \in A_i$ for some $i \in \Gamma$. Applying Levi's lemma, we get factorizations $x=x_1x_2 \cdots x_m$ and $y=y_1y_2 \cdots y_m$ and the following diagram:
\begin{center}
  \begin{tabular}{c"c|c|c|c|c|}\hline
  $y$  & $y_1$ & $y_2$  & \dots  & $y_{m-1}$ & $y_m$ \\ \hline
  $b$  &            &             & \dots  &                   & $b$ \\ \hline
  $a$  & $a$     &			 & \dots  & 				 & \\ \hline
  $x$  & $x_1$ & $x_2$  & \dots  & $x_{m-1}$ & $x_m$ \\ \thickhline
          & $u$     & $u$     & \dots  & $u$ 			 & $u$
  \end{tabular}
\end{center}
This is in fact the only possibility for the positions of the atoms $a$ and $b$: If $a$ and $b$ were in the same column
then $u$ would contain the factor $ab$ and hence $u \notin \IRR(u)$.
Also $a$ and $b$ are not independent, which means $b$ has to be top-right from $a$. If $a$ is not in the first column or $b$ is not in the last column, then $u^{m-1}$ is reducible, which contradicts the choice of $m$.
Hence, we have $u = x_1 a y_1 = x_m b y_m$ with $a \mathrel{I} x_m$, $y_1 \mathrel{I} x_m$ and $b \mathrel{I} y_1$.
We get $u^2 = x_1 a y_1 x_m b y_m = x_1 a x_m y_1 b y_m = x_1 x_m a y_1 b y_m = x_1 x_m a b y_1 y_m$.
Hence $u^2 \not\in \IRR(R)$, which is a contradiction.
\end{proof}

\begin{lemma}\label{well-behaved-iff}
A trace $u \in \dM(A,I)$ is well-behaved if and only if it has the following properties:
\begin{itemize}
\item $u$ is irreducible,
\item $u$ is not atomic,
\item $u$ is connected, and
\item one cannot write $u$ as $u = a v b$ such that $a,b \in A_i$ for some $i \in \Gamma$
(in particular, $u$ is cyclically reduced).
\end{itemize}
\end{lemma}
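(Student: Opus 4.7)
The plan is to prove the two implications separately; the forward direction is immediate, while the backward direction, the heart of the argument, reduces to showing $u^2 \in \IRR(R)$ via Lemma~\ref{lemma-square} and then uses Levi's lemma. For the forward direction, assume $u$ is well-behaved. Then $u$ is connected and $u = u^1 \in \IRR(R)$ by definition. If $u = a \in A_i$ were atomic, then $[u^2]_I = [aa]_I$ would rewrite via either $(aa^{-1},1) \in R$ (when $a$ has order $2$) or $(aa,a^2) \in R$ (otherwise), contradicting $u^2 \in \IRR(R)$. If $u = avb$ with $a,b \in A_i$, then $u^2 = avbavb$ contains $[ba]_I$ as a factor, which rewrites under $R$, again contradicting $u^2 \in \IRR(R)$.

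For the backward direction, assume $u$ satisfies the four properties and suppose, for contradiction, that $u^2 = \hat{x} \cdot [cd]_I \cdot \hat{y}$ with $c,d \in A_i$. I will apply Levi's lemma to $u \cdot u = \hat{x}\,c\,d\,\hat{y}$ and case-analyze which copy of $u$ contains each of the atoms $c$ and $d$. The two ``same-copy'' cases place $[cd]_I$ as a factor of $u$, contradicting $u \in \IRR(R)$; the ``cross'' case with $d$ in the first copy and $c$ in the second forces $d\mathbin{I}c$ by Levi's independence condition, contradicting dependence within $A_i$. The only surviving case is $u = x_1 c y_1 = x_2 d y_2$ with $c\mathbin{I} x_2$, $y_1 \mathbin{I} d$, and $y_1 \mathbin{I} x_2$.

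The crucial upgrade step is the following: from $c \in A_i$ and $c\mathbin{I} x_2$, every letter of $x_2$ lies in some $A_j$ with $(i,j) \in E$; since $d \in A_i$ as well, the same graph edges yield $d \mathbin{I} x_2$, and symmetrically $c \mathbin{I} y_1$. Consequently $u = d x_2 y_2$ and $u = x_1 y_1 c$ as traces, so $d$ is a minimal and $c$ a maximal position in the pomset of $u$. If $c$ and $d$ refer to the same position in $u$, this atom is both minimal and maximal, hence independent of every other letter of $u$; combined with $|u| \geq 2$ from non-atomicity, this writes $u = c \cdot w$ with $c \mathbin{I} w$ and $w \neq 1$, contradicting connectedness. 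Otherwise $c \neq d$ as positions, and extending a linearization starting with $d$ to one ending with $c$ (possible because $c$ remains maximal after removing $d$) yields $u = dvc$ with $d,c \in A_i$, contradicting the fourth property. The main obstacle will be this upgrade step, which crucially exploits that the independence relation $I$ is determined by the graph $E$ on $\Gamma$ rather than by individual atoms; the preceding Levi case analysis, though notationally heavy, is routine.
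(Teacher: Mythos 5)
Your proposal is correct and follows essentially the same route as the paper: reduce to showing $u^2 \in \IRR(R)$ via Lemma~\ref{lemma-square}, apply Levi's lemma to $u^2 = x c d y$, rule out the same-copy and crossing cases, and use the key observation that $c,d$ lie in the same $A_i$ to upgrade $c I x_2$ and $y_1 I d$ to $d I x_2$ and $c I y_1$, exactly as in the paper's proof. The only (harmless) difference is the endgame: where the paper applies Levi's lemma a second time to the equation $v a = b w$, you argue directly with the minimal/maximal occurrences in the pomset of $u$, arriving at the same two contradictions (failure of connectedness when the two occurrences coincide, and a forbidden factorization $u = d v c$ with $d,c \in A_i$ otherwise).
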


\begin{proof}
Clearly, if one the four conditions in the lemma is not satisfied, then $u$ is not well-behaved.
Now assume that the four conditions hold for $u$. By Lemma~\ref{lemma-square}, it suffices to show that
$u^2 \in \IRR(R)$. Assume that $u^2=xaby$ with $a,b \in A_i$. Applying Levi's lemma, and using $u \in \IRR(R)$ and 
$(a,b) \notin I$,  we obtain the following diagram:
\begin{center}
  \begin{tabular}{c"c|c|}\hline
  $y$  & $y_1$ & $y_2$  \\ \hline
  $b$  &            &  $b$      \\ \hline
  $a$  & $a$     &			\\ \hline
  $x$  & $x_1$ & $x_2$  \\ \thickhline
          & $u$     & $u$
  \end{tabular}
\end{center}
From Levi's lemma we also get $b\mathrel{I} y_1$ and $a\mathrel{I} x_2$. But $a$ and $b$ are in the same group, hence $a\mathrel{I} y_1$ and $b\mathrel{I} x_2$ also hold. The first property implies
$u=va$ with $v=x_1y_1$ and the seconds property
gives us $u=bw$ with $w=x_2y_2$. Since $u$ is not atomic, we have $v\neq 1 \neq w$. Now we apply Levi's lemma to $va=bw$, which yields one of the following diagrams:
\begin{center}
  \begin{tabular}{c"c|c|}\hline
  $w$  & $v=w$ &  \\ \hline
  $b$  &            &  $a=b$      \\ \thickhline
          & $v$     & $a$
  \end{tabular}
  \qquad
  \begin{tabular}{c"c|c|}\hline
  $w$  & $w^\prime$ &  $a$\\ \hline
  $b$  &     $b$     &       \\ \thickhline
          & $v$     & $a$
  \end{tabular}
\end{center}
From the left diagram we get $a \mathrel{I} v$. Hence $u = va$ is not connected, which is a contradiction.
From the right diagram we get $u = va = b w' a$ for some trace $w^\prime$, which is a contradiction to our last assumption. This finally proves $u^2 \in \IRR(R)$, hence $u$ is well-behaved.
\end{proof}

\begin{lemma}\label{power-presentation}
From a trace $u \in \dM(A,I)$ one can compute traces $s,t, v_1, \ldots, v_k \in \IRR(R)$,
such that the following hold:
\begin{itemize}
\item every $v_i$ is either atomic or well-behaved,
\item $u^m =_{\dG} s  v_1^m \cdots v_k^{m} t$ for all $m \geq 0$,
\item $\norm{s} + \norm{t} + \sum_{i=1}^k \norm{v_i} \le 3 \norm{u}$,
\item  $k \leq \alpha$, where $\alpha$ is the size of a largest clique in $(\Gamma,E)$.
\end{itemize}
\end{lemma}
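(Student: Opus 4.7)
The plan is to first put $u$ in a cyclically reduced form, then to extract a strongly compatible pair of cliques whose removal leaves a trace whose connected components are atomic or well-behaved, and finally to rewrite $u^m$ to exhibit the claimed power structure.

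I first replace $u$ by $\NF_R(u)$: every rule of $R$ either deletes a pair $aa^{-1}$ (decreasing the norm by $2\norm{a}$) or replaces $ab$ (for $a,b \in A_i$) by the single atom $ab \in A_i$ of geodesic length at most $\norm{a}+\norm{b}$, so $\NF_R(u) \in \IRR(R)$, $u =_\dG \NF_R(u)$, and $\norm{\NF_R(u)} \le \norm{u}$. Next I perform a cyclic reduction: iteratively, while the current $u$ can be written as $a \tilde u a^{-1}$ for some atom $a$, replace $u$ by $\tilde u$ and append $a$ on the right of a growing prefix $p$ (initially $p = 1$). Repeated use of Lemma~\ref{lemma-levi} gives a factorization $u = p u' p^{-1}$ in $\dM(A,I)$ with $p, u' \in \IRR(R)$ such that $u'$ admits no factorization $a w a^{-1}$ for any atom $a$, and $u^m =_\dG p\, (u')^m\, p^{-1}$ for every $m \ge 0$.

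Next I extract a strongly compatible pair of cliques from $u'$. The sets $M$ and $N$ of minimal and maximal atoms of $u'$ are independence cliques by Levi's lemma. Let $C \subseteq M$ consist of those $a \in M \cap A_j$ for which $A_j \cap N \neq \emptyset$, and define $D \subseteq N$ symmetrically; then $C$ and $D$ have equal $\Gamma$-alphabet and pair off as $C = \{c_1, \dots, c_\ell\}$, $D = \{d_1, \dots, d_\ell\}$ with $c_i, d_i \in A_{j_i}$. They are strongly compatible: otherwise $d_i c_i = 1$ in $\dG_{j_i}$ forces $c_i = d_i^{-1}$, and Levi's lemma applied to $u' = [C]_I v [D]_I$ would yield a decomposition $u' = c_i u'' c_i^{-1}$, contradicting the choice of $u'$. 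Setting $w := v [DC]_I$, the identities $[D]_I [C]_I =_\dG [DC]_I$ and $[DC]_I^{-1} [D]_I =_\dG [C]_I^{-1}$ give, after a short telescoping computation,
\[
u^m =_\dG p\, ([C]_I v [D]_I)^m\, p^{-1} =_\dG p [C]_I\, w^m\, [C]_I^{-1} p^{-1}.
\]
Letting $v_1, \dots, v_k$ be the connected components of $w$, pairwise independence gives $w^m = v_1^m \cdots v_k^m$ in $\dM(A,I)$, and setting $s := p [C]_I$, $t := [C]_I^{-1} p^{-1}$ yields $u^m =_\dG s\, v_1^m \cdots v_k^m\, t$. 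The traces $s$ and $t$ are irreducible, as prefix and suffix of the irreducible traces $u$ and $u^{-1}$. The bound $k \le \alpha$ follows because the $\Gamma$-alphabets of the $v_i$ are pairwise disjoint and pairwise $E$-adjacent, giving a clique of size $k$ in $(\Gamma,E)$. The norm bound is a direct computation using $\norm{u} = 2\norm{p} + \norm{[C]_I} + \norm{v} + \norm{[D]_I}$ and $\norm{[DC]_I} \le \norm{[C]_I} + \norm{[D]_I}$, yielding $\norm{s}+\norm{t}+\sum_i \norm{v_i} \le 2\norm{p} + 3\norm{[C]_I} + \norm{v} + \norm{[D]_I} \le 3 \norm{u}$.

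The main obstacle is to show that each non-atomic $v_i$ is well-behaved. By construction $w = v[DC]_I$ is irreducible, because the maximal atoms of $v$ lie in groups $A_j$ with $j \notin \alp_\Gamma(C)$, hence in groups disjoint from those of the atoms of $[DC]_I$, so no $R$-rule applies at the interface. Each $v_i$ is therefore connected and irreducible. By Lemma~\ref{well-behaved-iff} it remains to rule out a factorization $v_i = a v' b$ with $a, b \in A_j$ for some $j$; such a decomposition would exhibit a minimal and a maximal atom of $u'$ in the same group $A_j$ that the definitions of $C$ and $D$ force to have been placed into $C$ and $D$ (so that their contribution would have been swallowed into $[DC]_I$), a contradiction. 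Carrying out this case analysis via Levi's lemma, disentangling which atoms can occupy the extremes of each connected component of $w$, is the combinatorial heart of the proof; everything else is bookkeeping.
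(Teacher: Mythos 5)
Your overall architecture---normalize, cyclically reduce to $u = p\,u'\,p^{-1}$, peel off the boundary cliques $C,D$, conjugate $[D]_I$ past the period to form $w = v[DC]_I$, split $w$ into connected components, and do the norm and clique-size bookkeeping---is a legitimate alternative to the paper's argument, and the telescoping identity, the bound $k\le\alpha$, the irreducibility of $s,t$, and the estimate $\norm{s}+\norm{t}+\sum_i\norm{v_i}\le 3\norm{u}$ all check out. (The paper proceeds differently and more simply: it iterates a \emph{stronger} reduction, rewriting the period as $v\,(ba)$ whenever it has the form $avb$ with $a,b\in A_j$ and pushing $a,a^{-1}$ into $s,t$; once this terminates, Lemma~\ref{well-behaved-iff} applies to the connected components with no further work.) However, there is a genuine gap exactly at the point you label the combinatorial heart: you never prove that a non-atomic connected component $v_i$ of $w=v[DC]_I$ admits no factorization $v_i=a v' b$ with $a,b\in A_j$. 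The one-sentence justification you offer is not correct as stated: such a factorization exhibits a minimal and a maximal atom of $w$, not of $u'$; the atom $b$ may be an element of the new clique $DC$ (hence not an atom of $u'$ at all), and $a$ may be a minimal atom of $v$ that is minimal in $w$ only because it is independent of the removed clique $C$, so it need not be minimal in $u'$ nor lie in a group meeting both extremes. Translating these possibilities back into statements about the minimal and maximal atoms of $u'=[C]_I v[D]_I$ and then deriving a contradiction with the definitions of $C,D$ and with irreducibility requires a several-case analysis via Levi's lemma; that case analysis \emph{is} the content of the lemma, and deferring it leaves the statement unproven.

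In addition, the clique-extraction step is incorrect as formulated. If $u'$ has an atomic connected component, i.e.\ a single occurrence $a\in A_j$ independent of all other occurrences of $u'$ (take, e.g., $u=ag$ with $a\in A_1$, $g\in A_2$, $(1,2)\in E$: this is irreducible and admits no cyclic reduction), then that occurrence is simultaneously minimal and maximal, so your definitions place it into both $C$ and $D$; but then the factorization $u'=[C]_I\,v\,[D]_I$ on which the entire construction (including your strong-compatibility argument) rests does not exist, since it would require two distinct $A_j$-occurrences in $u'$. You would need to first split $u'$ into connected components and run the min/max extraction only on the non-atomic ones (keeping atomic components untouched), or equivalently exclude from $C$ and $D$ every occurrence that is both minimal and maximal. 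Both defects are repairable, but as written the proof fails on simple inputs at the construction step and is incomplete at its central step.
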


\begin{proof}
Let $u \in \dM(A,I)$. As an initial processing, we can replace every $u$ by $\NF_R(u) \in \IRR(R)$.
So we can assume that $u$ is already irreducible.
In the next step, we compute irreducible traces $s, w, t$, such that 
$u^m=_{\dG} sw^m t$ for all $m \ge 0$ and $w$ cannot be 
written as $w=aw' b$ with $a,b \in A_j$ for some $j \in \Gamma$. For this, we will inductively construct 
irreducible traces $s_i, u_i, t_i$ (with $0 \le i \le \ell$ for some $\ell$)
such that $u^m=_{\dG}s_{i} u_{i}^m t_{i}$ for all $m \ge 0$. Moreover, if $0 \le i < \ell$ then $|u_{i}| > |u_{i+1}|$.
We start with 
$u_0=u$ and $s_0=t_0=1$.
Assume that after $i$ steps we already found irreducible traces 
$s_{i}, u_{i}, t_{i}$ with $u^m=_{\dG} s_{i} u_{i}^m t_{i}$
for all $m \ge 0$. If $u_{i}$ cannot be written in the form $au'b$ with $a,b \in A_j$ for some $j$, then we are done.
Otherwise assume that $u_{i}=av_{i}b$ for some $a, b \in A_j$. Let $c \in A_j \cup \{ 1 \}$ such that
$c = ba$ in the group $\dG_j$. So we get $u_{i}^m=_{\dG} a(v_{i}c)^m a^{-1}$ for all $m \ge 0$. This means $u^m=_{\dG} (s_{i}a) (v_{i}c)^m (a^{-1}t_{i})$. Hence, we can set $u_{i+1} =v_ic$, $s_{i+1} = \NF_R(s_ia)$ and $t_{i+1} =\NF_R(a^{-1}t_i)$. Note that $|u_{i+1}| = |u_i|-1$,
$\norm{u_{i+1}} \leq \norm{u_i}$, $\norm{s_{i+1}} \leq \norm{s_i} + \norm{a}$, and 
$\norm{t_{i+1}} \leq \norm{t_i} + \norm{a}$.
This process is terminating after at most $|u|$ steps. Note 
also that each $u_{i+1}$ is irreducible. When our algorithm is terminating after step $\ell$, we set $v=u_\ell$, $s=s_\ell$ and $t=t_\ell$.
We have 
\begin{equation} \label{norm-bound}
\norm{s}, \norm{t}, \norm{v} \leq \norm{u} .
\end{equation}
Finally, we split $v$ into its connected components, i.e., we write $v=v_1 \cdots v_{k}$, where every $v_j$ is connected
and $v_i \mathrel{I} v_j$ for $i \neq j$. We obtain for every $m \ge 0$ the identity
$u^m =_{\dG} s v_{1}^m \cdots v_k^{m} t$
as described in the statement of the lemma. If a $v_j$ is not atomic then it cannot be written 
as $v_j=bv'_jc$ with $b, c \in A_i$ (otherwise the above reduction process would continue).
Thus Lemma~\ref{well-behaved-iff} implies that the non-atomic $v_j$ are well-behaved. 
Finally, we have $\sum_{i=1}^k \norm{v_i} = \norm{v}  \le \norm{u}$ by \eqref{norm-bound}.
\end{proof}

\begin{bem} \label{remark-power-presentation}
If $E=\emptyset$ then we must have $k = 1$ in Lemma~\ref{power-presentation} since $\alpha=1$.
Hence, we obtain $s,t,v$, where $v$ is either atomic or well-behaved, such that
$u^m = s v^m t$ for every $m \geq 0$ and $\norm{s} + \norm{v} + \norm{t}  \le 3 \norm{u}$.
\end{bem}

\subsection{Reductions to the empty trace}

For the normal form of the product of two $R$-irreducible traces we
have the following lemma, which was shown in \cite{DiLo08IJAC}
(equation (21) in the proof of Lemma 22) using a slightly different
notation.

\begin{lemma}\label{normalform}
Let $u,v \in \dM(A,I)$ be $R$-irreducible. 
Then there exist strongly compatible independence cliques $C, D$ and
unique factorizations $u = p C s$, $v = s^{-1} D t$
such that $\NF_R(uv) = p (CD) t$.
\end{lemma}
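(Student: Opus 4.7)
The plan is to extract from the product $uv$ two independent pieces of information: the maximal cancellation at the boundary, captured by $s$, and the residual boundary merging captured by the cliques $C$ and $D$. Because $u$ and $v$ are both irreducible, every $R$-redex in $uv$ must straddle the boundary, so I will first push cancellation as far as possible and then read off the clique structure from whatever boundary rewrites remain.

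For existence, I would first choose $s\in\dM(A,I)$ of maximal length such that $u=u_1 s$ and $v=s^{-1}v_1$ for some $u_1,v_1\in\dM(A,I)$; existence of such a maximal $s$ follows by a standard Levi-style pairing argument (see Lemma~\ref{lemma-levi}), using the facts that $R$ is length-reducing and that $\dM(A,I)$ is cancellative. Then $uv\to_R^* u_1 v_1$ by successive $[aa^{-1}]_I\to 1$ cancellations aligned with $s$. It remains to reduce $u_1 v_1$. Any further rewrite must again straddle the $u_1/v_1$ boundary (since $u_1$ is a prefix of $u\in\IRR(R)$ and $v_1$ a suffix of $v\in\IRR(R)$), and it cannot be of type $[aa^{-1}]_I\to 1$: such a rewrite would let us enlarge $s$ by one more atom, contradicting its maximality. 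Hence only length-preserving rewrites of the form $[ab]_I\to[c]_I$ with $c\neq 1$ occur. I would then apply Levi's lemma to each such pair $(a,b)$ to show that $a$ is a maximal atom of $u_1$ and $b$ is a minimal atom of $v_1$, lying in the same $A_i$.

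Let $C$ be the set of such maximal atoms of $u_1$ that actually get merged, and $D$ the corresponding minimal atoms of $v_1$. Because every pair of atoms in $C$ can each be slid to the boundary to participate in a merge, Levi's lemma forces any two distinct atoms of $C$ to commute, so $C$ is an independence clique; similarly $D$. The pairing $a\in C\cap A_i \mapsto b\in D\cap A_i$ shows $\alp_\Gamma(C)=\alp_\Gamma(D)$, and since every product $ab$ is nontrivial in $\dG_i$ (else $s$ was not maximal), $C$ and $D$ are strongly compatible. Writing $u_1=pC$ and $v_1=Dt$ via the clique structure, the remaining reductions produce $\NF_R(uv)=p(CD)t$, and the length budget $|p|+|C|+|s|=|u|$, $|s|+|D|+|t|=|v|$ gives the required decompositions $u=pCs$, $v=s^{-1}Dt$.

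For uniqueness, suppose $u=pCs=p'C's'$ and $v=s^{-1}Dt=(s')^{-1}D't'$ are two such decompositions with $\NF_R(uv)=p(CD)t=p'(C'D')t'$. I would apply Levi's lemma to $pCs=p'C's'$ and argue that if, say, $|s|<|s'|$, then an atom of $C'$ would have to appear in $s$, but $s$ also equals (up to independence) a suffix of $(s')^{-1}D't'\cdot v^{-1}$-type rearrangement, forcing a coincidence $s=s'$; then cancellativity of $\dM(A,I)$ gives $pC=p'C'$ and $Dt=D't'$. Combined with the fact that $C,D$ are uniquely determined as sets of merging atoms (by the maximality of $s$ and the irreducibility of the factors $p,t$ of $\NF_R(uv)$), this yields $C=C'$, $D=D'$, $p=p'$, $t=t'$. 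The main technical obstacle will be making the ``maximal $s$'' argument precise in the presence of concurrent cancellations allowed by the independence relation; once that is dealt with via Levi's lemma, the clique analysis and uniqueness follow routinely.
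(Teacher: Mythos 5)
You should note first that the paper itself gives no proof of Lemma~\ref{normalform}: it is imported from \cite{DiLo08IJAC} (equation (21) in the proof of Lemma~22), so your argument has to stand on its own. Your existence argument takes the natural route (maximal cancelled suffix $s$, then boundary merges), but as written it only analyses the redexes of the \emph{initial} trace $u_1v_1$: you correctly show that the first rewrite must be a boundary merge $[ab]_I\to[c]_I$ with $a$ a maximal letter of $u_1$ and $b$ a minimal letter of $v_1$ in the same $A_i$, and that a cancellation would contradict the maximality of $s$. After that step, however, the trace contains the new letter $c=ab$ and is no longer of the form (prefix of $u$)(suffix of $v$), so the jump to ``the remaining reductions produce $\NF_R(uv)=p(CD)t$'' hides an induction over the whole reduction sequence. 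One must rule out cascades: a redex $[a'c]_I$ with $a'$ from the remaining part of $u_1$ would force $a'$ to be maximal there and hence exhibit the redex $[a'a]_I$ inside the irreducible $u_1$; two created letters lie in distinct groups (two maximal letters of $u_1$ in the same $A_i$ would already give a redex in $u_1$), so they never react; and a cancellation between original letters arising only at a later step must again be excluded via the maximality of $s$. This is all provable, but it is precisely the content that makes $C$ and $D$ sets of pairwise independent \emph{original} atoms of $u_1$ and $v_1$, so it cannot be left implicit.

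The uniqueness half is a genuine gap. The lemma asserts uniqueness among \emph{all} factorizations $u=pCs$, $v=s^{-1}Dt$ with $C,D$ strongly compatible cliques and $p(CD)t=\NF_R(uv)$; you may not appeal to ``maximality of $s$'', which is a property of the decomposition you constructed, not a hypothesis on a competing one. The step that needs proof is exactly the one you assert: $s=s'$. Your intermediate claim that $|s|<|s'|$ forces an atom of $C'$ into $s$ does not follow (the extra letters of $s'$ could come from $p$), and no contradiction is drawn from it; the ``$(s')^{-1}D't'\cdot v^{-1}$-type rearrangement'' is not an argument. A workable route: writing $|w|_i$ for the number of letters of $w$ in $A_i$, the identity $p(CD)t=\NF_R(uv)$ gives $|u|_i+|v|_i-|\NF_R(uv)|_i=2|s|_i+\epsilon_i$, where $\epsilon_i\in\{0,1\}$ records whether $i\in\alp_\Gamma(C)$; the left-hand side is independent of the decomposition, so parity pins down $\alp_\Gamma(C)=\alp_\Gamma(C')$ and then $|s|_i=|s'|_i$ for all $i$. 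Since in any factorization $u=xs$ the group-$i$ occurrences of $s$ form a final segment of the group-$i$ occurrences of $u$, equal per-group counts force $s$ and $s'$ to consist of the same occurrences, hence $s=s'$; cancellativity then yields $pC=p'C'$ and $Dt=D't'$, and because the clique letters are the last (resp.\ first) occurrences of their groups in these traces, $C=C'$, $D=D'$, $p=p'$, $t=t'$. Without an argument of this kind the uniqueness claim remains unproved.
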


In the following, we consider tuples over $\IRR(R)$ of arbitrary length.
We identify tuples that can be obtained from each other by inserting/deleting $1$'s at
arbitrary positions. Clearly, every tuple is equivalent to a possibly empty tuple over $\IRR(R) \setminus \{1\}$.

\begin{defi} \label{def-reduction}
We define a reduction relation on tuples over $\IRR(R)$ of arbitrary length.
Take $u_1, u_2, \ldots, u_m \in \IRR(R)$. Then we have
\begin{itemize}
\item $(u_1, u_2, \ldots, u_m) \to (u_1, \ldots, u_{i-1}, u_{i+1}, u_i, u_{i+2}, \ldots, u_m)$  if 
$u_i \mathrel{I} u_{i+1}$ (a {\em swapping step}),
\item  $(u_1, u_2, \ldots, u_m) \to (u_1, \ldots, u_{i-1}, u_{i+2}, \ldots, u_m)$  if
$u_i = u_{i+1}^{-1}$ in $\dM(A,I)$ (a {\em cancellation step}),
\item $(u_1, u_2, \ldots, u_m) \to (u_1, \ldots, u_{i-1}, a, u_{i+2}, \ldots, u_m)$ if
there exists $j \in \Gamma$ with $u_i, u_{i+1}, a \in A_j$, and $a = u_i u_{i+1}$ in $G_j$
(an {\em atom creation step of type $j$}).
\end{itemize}
Moreover, these are the only reduction steps. 
A concrete sequence of these rewrite steps leading to the empty tuple
is a {\em reduction} of $(u_1, u_2, \ldots, u_m)$. 
If such a sequence exists, the tuple is called {\em $1$-reducible}.
\end{defi}
A reduction of the tuple $(u_1, u_2, \ldots, u_m)$ can be seen as
a witness for the fact that $u_1 u_2 \cdots u_m =_{\dG} 1$. 
On the other hand, $u_1 u_2 \cdots u_m =_{\dG}1$ does not necessarily imply
that $(u_1, u_2, \ldots, u_m)$ has a reduction. For instance, the tuple $(a^{-1}, ab, b^{-1})$
has no reduction. But we can show that every sequence which multiplies to $1$ in $\dG$ can be refined
(by factorizing the elements of the sequence) such that the resulting refined sequence has a reduction.
We say that the tuple $(v_1, v_2, \ldots, v_n)$ is a {\em refinement} of the tuple 
$(u_1, u_2, \ldots, u_m)$ if there exists factorization $u_i = u_{i,1}  \cdots u_{i,k_i}$ in $\dM(A,I)$ such that
$(v_1, v_2, \ldots, v_n) = (u_{1,1}, \ldots, u_{1,k_1}, \; u_{2,1}, \ldots, u_{2,k_2}, \; \ldots, u_{m,1}, \ldots, u_{m,k_m})$.
In the following, if an independence clique $C$ appears in a tuple over $\IRR(R)$, we identify
this clique with the sequence $a_1, a_2, \ldots, a_n$ which is obtained by enumerating the elements of 
$C$ in an arbitrary way. For instance, $( [abcd]_{\mathrel{I}}, \{a,b,c\})$ stands for the tuple $([abcd]_{\mathrel{I}}, a,b,c)$.
Let us first prove the following lemma:

\begin{lemma} \label{lemma-aux-eps-red}
Assume that the tuple $(v_1, v_2, \ldots, v_n)$ is $1$-reducible with at most $m$ atom creations of each type. For all $1 \leq i \leq n$ 
let $v_i = p_i D_i t_i$ be a factorization in $\dM(A,I)$ where $D_i$ is an independence clique of $(A,I)$.
By refining $p_1, t_1, \ldots, p_n, t_n$ into a total of at most $4n + \sum_{i=1}^n |D_i|$ traces, we can obtain
a refinement of $(p_1, D_1, t_1, p_2, D_2, t_2, \ldots, p_n, D_n, t_n)$ which is $1$-reducible with at most $m$ atom creations of each type.
\end{lemma}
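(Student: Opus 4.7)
The plan is to prove this by induction on the length of the given reduction of $(v_1, \ldots, v_n)$, the empty reduction ($n=0$) being the trivial base case. In the inductive step I analyse the first reduction step, say at positions $j, j+1$, lift it to a reduction step on the refined tuple, and then apply the inductive hypothesis to the residual reduction.

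If the first step is a \emph{swap} ($v_j\, I\, v_{j+1}$), then every atom of $v_j$ is independent of every atom of $v_{j+1}$, so a sequence of swap steps in the refined tuple can interchange the two blocks $(p_j, D_j, t_j)$ and $(p_{j+1}, D_{j+1}, t_{j+1})$ without any additional refinement; the inductive hypothesis applied to the swapped tuple (with the same factorisations) finishes this case. If the first step is an \emph{atom creation} of type $k$ combining $v_j, v_{j+1} \in A_k$ into $a \in A_k$, then $v_j$ and $v_{j+1}$ appear as adjacent single atom entries in the refined tuple (exactly one of $p_j, D_j, t_j$ is non-trivial, and likewise for $j+1$), so the same atom creation applies directly. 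Choosing the factorisation $p_a = t_a = 1$ and $D_a = \{a\}$ for the new atom, induction on the resulting length-$(n-1)$ tuple closes the case. In both situations the per-type atom-creation count of the refined reduction matches that of the original.

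The substantial case is a \emph{cancellation step}, where $v_j = v_{j+1}^{-1}$ in $\dM(A, I)$ and hence
\[
   p_j\, D_j\, t_j \;=\; t_{j+1}^{-1}\, D_{j+1}^{-1}\, p_{j+1}^{-1}.
\]
Applying Levi's lemma (Lemma~\ref{lemma-levi}) to these two length-$3$ factorisations yields a $3 \times 3$ grid $w_{a,b}$ with $p_j = w_{1,1} w_{1,2} w_{1,3}$, $D_j = w_{2,1} w_{2,2} w_{2,3}$, $t_j = w_{3,1} w_{3,2} w_{3,3}$ and $t_{j+1}^{-1} = w_{1,1} w_{2,1} w_{3,1}$, $D_{j+1}^{-1} = w_{1,2} w_{2,2} w_{3,2}$, $p_{j+1}^{-1} = w_{1,3} w_{2,3} w_{3,3}$, together with the ``left-above'' independence $w_{a,b}\, I\, w_{c,d}$ whenever $a<c$ and $b>d$. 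I refine $p_j, t_j, p_{j+1}, t_{j+1}$ along this grid, keeping the four corner pieces $w_{1,1}, w_{1,3}, w_{3,1}, w_{3,3}$ as single (possibly multi-atom) entries but splitting the middle-row and middle-column pieces $w_{1,2}, w_{3,2}, w_{2,1}, w_{2,3}$ into their individual atoms; the atoms of $D_j$ are enumerated consistently with $w_{2,1} w_{2,2} w_{2,3}$ and those of $D_{j+1}$ in the reverse of $w_{1,2} w_{2,2} w_{3,2}$. Starting from the adjacent pair $(w_{3,3}, w_{3,3}^{-1})$ and working outward, every $w_{a,b}$ can be brought next to its inverse using only swaps licensed by the Levi independences and then cancelled, reducing the refined block at positions $j, j+1$ to the empty tuple using swaps and cancellations only, with no new atom creations.

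The main obstacle is the $p, t$-piece budget in this cancellation case. Positions $j, j+1$ contribute
\[
   2\cdot\bigl|\{\text{non-trivial corners}\}\bigr| \;+\; (|w_{1,2}| + |w_{3,2}|) \;+\; (|w_{2,1}| + |w_{2,3}|)
\]
$p, t$-pieces; the last two parenthesised sums are bounded by $|D_{j+1}| - |w_{2,2}|$ and $|D_j| - |w_{2,2}|$ respectively, by the clique factorisations $D_{j+1}^{-1} = w_{1,2} w_{2,2} w_{3,2}$ and $D_j = w_{2,1} w_{2,2} w_{2,3}$, while the corner contribution is at most $8$. Hence positions $j, j+1$ add at most $8 + |D_j| + |D_{j+1}|$ to the piece count, which is precisely the allowance left in the global bound $4n + \sum_i |D_i|$ after reserving $4(n-2) + \sum_{i \neq j, j+1} |D_i|$ for the inductive hypothesis on the residual length-$(n-2)$ tuple. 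Combining this with the book-keeping for cases A and C finishes the proof, and since neither swaps nor cancellations introduce atom creations, the per-type count of the refined reduction never exceeds that of the original, hence stays within the bound $m$.
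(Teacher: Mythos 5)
Your proof is correct and takes essentially the same route as the paper's: the paper simulates the given reduction directly (swaps handled blockwise, atoms' triples treated as the atom itself, and each cancelling pair refined via the same $3\times 3$ Levi grid, keeping the four corner pieces whole and splitting the clique-overlap pieces into atoms, with the identical count of two non-clique pieces per $p_i,t_i$ plus at most $|D_j|+|D_{j+1}|$ clique atoms per cancelling pair), while you merely package this argument as an induction on the length of the reduction. Your explicit check that the refined cancelling block collapses using swaps and cancellations only is a detail the paper asserts without verification, so it is a welcome addition rather than a deviation.
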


\begin{proof}
Basically, we would like to apply to  $(p_1, D_1, t_1, p_2, D_2, t_2, \ldots, p_n, D_n, t_n)$ 
the same reduction that reduces $(v_1, v_2, \ldots, v_n)$ to the empty tuple. If we do a swapping
step $v_i, v_j \to v_j, v_i$ then we can swap also the order of $p_i, D_i, t_i$ and $p_j, D_j, t_j$ in several
swapping steps. Also notice that if $v_i$ is an atom, then the subsequence $p_i, D_i, t_i$ is equivalent to the atom $v_i$.
The only remaining problem are cancellation steps. Assume that $v_i$ and $v_j$ cancel, i.e., $v_i = v_j^{-1}$.
The traces $t_i$ and $t_j$ do not necessarily cancel out, and similarly for $p_i$ and $p_j$ and the atoms in $D_i$ and $D_j$.
Hence, we have to further refine $p_i, t_i, p_j, t_j$ using Levi's lemma. Applied to the identity 
$p_i D_i t_i = t_j^{-1} D_j^{-1} p_j^{-1}$ it yields the following diagram:
\begin{equation}  \label{diagramm-aux}
  \begin{tabular}{c"c|c|c|}\hline 
  $p_j^{-1}$  & $x_{i,j}$   &  $N_{i,j}$ & $z_{i,j}$  \\ \hline 
  $D_j^{-1}$ &  $W_{i,j}$ & $C_{i,j}$ & $E_{i,j}$ \\ \hline
  $t_j^{-1}$   & $w_{i,j}$  & $S_{i,j}$ & $y_{i,j}$  \\ \thickhline 
                    & $p_i$       & $D_i$     & $t_i$
   \end{tabular}
\end{equation}
Hence, we get factorizations 
\begin{eqnarray}
p_i &=& w_{i,j}  W_{i,j}  x_{i,j}  \label{refine-pi} \\
t_i &=& y_{i,j}  E_{i,j} z_{i,j} \label{refine-ti} \\
p_j &=& z_{i,j}^{-1}  N_{i,j}^{-1}  x_{i,j}^{-1} \label{refine-pj} \\
t_j &=& y_{i,j}^{-1}  S_{i,j}^{-1}  w_{i,j}^{-1} \label{refine-tj} .
\end{eqnarray}
where $D_i = S_{i,j} \uplus C_{i,j} \uplus N_{i,j}$ and $D_j = E_{i,j}^{-1} \uplus C_{i,j}^{-1} \uplus W_{i,j}^{-1}$.
Using these facts and the independencies obtained from the diagram \eqref{diagramm-aux} shows that 
the tuple $$(w_{i,j}, W_{i,j}, x_{i,j}, D_i, y_{i,j}, E_{i,j}, z_{i,j}, z_{i,j}^{-1}, N_{i,j}^{-1}, x_{i,j}^{-1}, D_j, y_{i,j}^{-1}, S_{i,j}^{-1}, w_{i,j}^{-1})$$
is $1$-reducible.
Hence, by refining $p_i$, $t_i$, $p_j$, and $t_j$ according to the factorizations \eqref{refine-pi}, \eqref{refine-ti}, \eqref{refine-pj}, and \eqref{refine-tj}, respectively,
we obtain a $1$-reducible refinement of $(p_1, D_1, t_1, p_2, D_2, t_2, \ldots, p_n, D_n, t_n)$.
Note that $|W_{i,j} \cup E_{i,j}| \le |D_j|$ and $|N_{i,j} \cup S_{i,j}| \le |D_i|$. Hence, 
the $2n$ traces  $p_1, t_1, \ldots, p_n, t_n$ are refined into totally at most $4n + \sum_{i=1}^n |D_i|$ traces.
\end{proof}

As before, $\alpha$ denotes the size of a largest independence clique in $(A,I)$.
\begin{lemma} \label{lemma-reduction}
Let $m \geq 2$ and $u_1, u_2, \ldots, u_m \in \IRR(R)$. If $u_1 u_2 \cdots u_m = 1$ in
$\dG$, then there exists a $1$-reducible refinement of $(u_1, u_2, \ldots, u_m)$ that 
has length at most $(3\alpha+4)m^2 \leq 7 \alpha m^2$
and there is a reduction of that refinement with at most $m-2$ atom creations of each type $i \in \Gamma$.
\end{lemma}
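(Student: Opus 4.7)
The plan is to proceed by induction on $m$ using a strengthened inductive hypothesis that additionally bounds the refinement length of each individual $u_i$ by $(3\alpha+4)m$, so that the total length is at most $m\cdot(3\alpha+4)m=(3\alpha+4)m^2$. For the base case $m=2$, Lemma~\ref{normalform} applied to $u_1,u_2$ with $u_1u_2=1$ in $\dG$ yields $u_1=pCs$, $u_2=s^{-1}Dt$ and $\NF_R(u_1u_2)=p(CD)t=1$. This forces $p=t=1$ and, by strong compatibility of $C,D$, also $C=D=\emptyset$. Hence $u_2=u_1^{-1}$, and $(u_1,u_2)$ itself is $1$-reducible via a single cancellation step, with per-element refinement length $1$ and $0$ atom creations.

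For the inductive step $m\geq 3$, I would apply Lemma~\ref{normalform} to obtain $u_1=pCs$, $u_2=s^{-1}Dt$, and set $u_1':=\NF_R(u_1u_2)=p(CD)t\in\IRR(R)$. The shorter tuple $(u_1',u_3,\ldots,u_m)$ has product $1$ in $\dG$, so by induction there is a $1$-reducible refinement $\rho$ with per-element length $\leq (3\alpha+4)(m-1)$ and $\leq m-3$ atom creations of each type. I would then modify the initial segment $(\rho_1,\ldots,\rho_{k_1})$ of $\rho$ refining $u_1'$: apply Levi's lemma to the two factorisations $\rho_1\cdots\rho_{k_1}=u_1'=p\cdot CD\cdot t$ to obtain a $3\times k_1$ grid $(w_{i,j})$, atomise each clique $w_{2,j}\subseteq CD$, and then replace each atom $c_\ell=a_\ell b_\ell\in CD$ (with $a_\ell\in C$, $b_\ell\in D$) by the pair $(a_\ell,b_\ell)$ while inserting $(s,s^{-1})$ at the boundary between the $C$-atoms sitting in $u_1$ and the $D$-atoms sitting in $u_2$. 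The resulting tuple $\sigma$ is a refinement of $(u_1,u_2,u_3,\ldots,u_m)$: the refinement of $u_1=pCs$ has length $\leq k_1+\alpha+1\leq (3\alpha+4)m$, symmetrically for $u_2$, and the refinements of $u_3,\ldots,u_m$ are inherited from $\rho$ and already satisfy the bound.

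To check $1$-reducibility of $\sigma$ I would first cancel $s$ with $s^{-1}$ (which become adjacent after the insertion), then use swaps to pair each $a_\ell$ with its matching $b_\ell$ (atoms from distinct groups in $\Gamma$ are independent), and finally perform the atom creation $a_\ell\cdot b_\ell\to c_\ell$. This uses at most $|CD|\leq\alpha$ atom creations, at most one per type in $\alp_\Gamma(CD)$ since $CD$ is a clique. Afterwards the tuple reduces to a further refinement of $\rho$, whose $1$-reducibility is provided by induction, so the total atom-creation count is bounded by $(m-3)+1=m-2$ per type, as required.

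The hard part will be ensuring that the cancellation-plus-atom-creation step produces a tuple that is genuinely a further refinement of $\rho$, rather than merely another refinement of $(u_1',u_3,\ldots,u_m)$ for which $1$-reducibility would have to be re-derived from scratch. The key is to use the Levi grid for $u_1'=p\cdot CD\cdot t$ to align every $\rho_j$ in the $u_1'$-portion with a single column of the grid, so that after atomising the $w_{2,j}$'s and reintroducing the $C$- and $D$-atoms separated by $(s,s^{-1})$, the reduction reassembles precisely the column-by-column common refinement of $\rho$ and $(p,CD,t)$. A naive alternative that first computes the common refinement and then rearranges risks multiplicative blow-up of the refinement length at each inductive step; the strengthened per-element bound $(3\alpha+4)m$ avoids this because each row of the Levi grid contributes at most $k_1$ pieces to the refinement of $p$ or of $t$, while the atomisation of $CD$ together with the insertion of $(s,s^{-1})$ contributes only an additive $\alpha+1$ per element, yielding the desired recurrence and hence the quadratic bound.
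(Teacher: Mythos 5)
There is a genuine gap, and it sits exactly at the point you flag as ``the hard part''. Your construction turns the $u_1'$-segment of the inductively obtained tuple $\rho$ into a strictly finer tuple (each $\rho_j$ is split along a column of the Levi grid into $w_{1,j}$, the atoms of $w_{2,j}$, and $w_{3,j}$). But a refinement of a $1$-reducible tuple is in general \emph{not} $1$-reducible: when the reduction of $\rho$ performs a cancellation step $\rho_j,\rho_l\to()$ with $\rho_l=\rho_j^{-1}$, the split pieces of $\rho_j$ cannot cancel against the unsplit $\rho_l$, since the allowed moves only cancel two adjacent elements that are exact inverses. So ``reassembling the column-by-column common refinement of $\rho$'' does not let you inherit $\rho$'s reduction; you must also refine every cancellation partner $\rho_l$ of every split piece, via another application of Levi's lemma, and check that this can be done without creating new atom creations. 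This is precisely the content of the paper's Lemma~\ref{lemma-aux-eps-red}, which your proposal has no substitute for.

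Once these partner refinements are forced, your strengthened per-element invariant collapses: the partners of the $k_1\le(3\alpha+4)(m-1)$ pieces of $u_1'$ may all lie inside the segment of a single $u_i$ with $i\ge 3$, so that one inductive step can add on the order of $3(3\alpha+4)(m-1)$ pieces to that single element, far exceeding the additive slack $(3\alpha+4)$ your hypothesis allows. Without the per-element bound, the one-element-at-a-time recursion only gives a recurrence of the shape $L(m)\le c\cdot L(m-1)+\dots$ with $c>1$, i.e.\ an exponential total length---the very multiplicative blow-up you hoped to avoid. The paper escapes this by combining Lemma~\ref{lemma-aux-eps-red} (explicit control of the number of added pieces and of atom creations when partners are refined) with a \emph{balanced} recursion: all adjacent pairs are merged simultaneously, so $m$ halves at each level and the constant-factor blow-up per level is absorbed by the recurrence $\ell(2n)\le 4\ell(n)+\mathcal{O}(\alpha n)$, yielding the quadratic bound. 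Your base case and the atom-creation count per step (one per type from the clique $CD$, giving $m-2$ in total) are fine, but the proof as proposed does not go through without reworking the length bookkeeping along these lines.
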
 

\begin{proof}
The proof of the lemma will be an induction on $m$. For this we first assume that $m$ is a power of 2. To make the induction work, we slightly strengthen the claim: We will show that there
exist factorizations of the $u_i$ with totally at most $f(m)=(\frac{3}{4}\alpha+1)m^2-(\frac{3}{2}\alpha+1)m$ factors such that the resulting
tuple is $1$-reducible and has a reduction with at most $(m-2)$ atom creations of each type $i \in \Gamma$.
This implies the lemma for the case that $m$ is a power of two.

The case $m=2$ is trivial (we must have $u_2 = u_1^{-1}$). Let $m = 2n \geq 4$. Then by Lemma~\ref{normalform} we can factorize 
$u_{2i-1}$ and $u_{2i}$ for $1\leq i \leq n$ as $u_{2i-1} = p_i C_{2i-1} s_i$ and $u_{2i} = s_i^{-1} C_{2i} t_i$ in $\dM(A,I)$ such that $C_{2i-1}$ and $C_{2i}$ are strongly compatible 
independence cliques and $v_i = p_i (C_{2i-1}C_{2i}) t_i$ is irreducible. Define the independence clique $D_i = C_{2i-1}C_{2i}$.
We have $v_1 v_2 \cdots v_n = 1$ in $\dG$. By induction, we obtain 
factorizations $p_i D_i t_i = v_i = v_{i,1}  \cdots v_{i,k_i}$ ($1\leq i \leq n$) such that the tuple
\begin{equation} \label{sequence-IH}
(v_{i,1}, \ldots, v_{i,k_i})_{1 \le i \leq n} 
\end{equation}
is $1$-reducible.  Moreover,
$$
\sum_{i=1}^{n} k_i \leq \left(\frac{3}{4}\alpha+1 \right)n^2 - \left(\frac{3}{2}\alpha +1\right)n
$$
and there exists a reduction of the tuple \eqref{sequence-IH} with at most $n-2$
atom creations of each type.
By applying Levi's lemma  to the trace identities $p_i D_i t_i = v_{i,1} v_{i,2} \cdots v_{i,k_i}$,
we obtain factorizations $v_{i,j} = x_{i,j} D_{i,j}  y_{i,j}$ in $\dM(A,I)$ such that
$D_i = \biguplus_{1 \leq j \leq k_i} D_{i,j}$, 
$p_i = x_{i,1} \cdots x_{i,k_i}$, $t_i = y_{i,1} \cdots u_{i,k_i}$, 
and the following independencies hold for $1 \leq j < \ell \leq k_i$:
$y_{i,j} \mathrel{I} x_{i,\ell}$, $y_{i,j} \mathrel{I} a$ for all $a \in D_{i,\ell}$, $a \mathrel{I} x_{i,\ell}$ for all $a \in D_{i,j}$.
Note that $D_{i,j}$ can be the empty set.

Let us now define for every $1 \leq i \leq n$ the tuples $\overline{u}_{2i-1}$ and $\overline{u}_{2i}$ as follows:
\begin{itemize}
\item $\overline{u}_{2i-1} = (x_{i,1}, \ldots, x_{i,k_i}, C_{2i-1}, s_i)$
\item $\overline{u}_{2i} = (s_i^{-1}, C_{2i}, y_{i,1}, \ldots, y_{i,k_i})$
\end{itemize}
Thus, the tuple $\overline{u}_i$ defines a factorization of the trace $u_i$ and the tuple $(\overline{u}_1, \overline{u}_2, \ldots, \overline{u}_{2n})$ is a refinement 
of $(u_1, \ldots, u_{2n})$ of length $2 f(n) + 2 n (\alpha + 1)$.
This tuple can be transformed using $n$ cancellation steps (cancelling $s_i$ and $s_i^{-1}$) and 
$n$ atom creations of each type into the sequence 
$$
(x_{i,1}, \ldots, x_{i,k_i}, D_i, y_{i,1}, \ldots, y_{i,k_i})_{1 \le i \le n} .
$$
Using swappings, we finally obtain the sequence
\begin{equation} \label{tuple-xDy}
(x_{i,1},D_{i,1},y_{i,1}, \ldots, x_{i,k_1},D_{i,k_1},y_{i,k_1})_{1 \le i \le n} .
\end{equation}
Recall that $v_{i,j} = x_{i,j} D_{i,j}  y_{i,j}$. Hence, the tuple \eqref{tuple-xDy} is a refinement of the $1$-reducible tuple \eqref{sequence-IH}.
We are therefore in the situation of Lemma~\ref{lemma-aux-eps-red}.
By further refining the totally at most $2 f(n)$ factors $x_{i,j}$ and $y_{i,j}$ of the traces $u_1, \ldots, u_{2n}$ we obtain a $1$-reducible tuple. 
The resulting refinement of $(u_1, \ldots, u_{2n})$ has length at most
\begin{eqnarray*}
 &&  4 \sum_{i=1}^{n} k_i  + \sum_{i=1}^n \sum_{j=1}^{k_i} |D_{i,j}|  +  2n + 2n\alpha \\
&\le & 4 \left(\frac{3}{4}\alpha+1 \right)n^2 - 4 \left(\frac{3}{2}\alpha+1\right)n + \sum_{i=1}^n  |D_i| +  2n + 2n\alpha \\
&\le&   (3\alpha+4) n^2 - (6\alpha+4)n + (3\alpha+2) n \\ 
&=&   (3\alpha+4) n^2 - (3\alpha+2) n \\
& = & \left(\frac{3}{4}\alpha+1\right) m^2 - \left(\frac{3}{2}\alpha+1\right) m
\end{eqnarray*}
($\sum_{i=1}^{n} k_i + \sum_{i=1}^n \sum_{j=1}^{k_i} |D_{i,j}|$ traces from the refinement of the traces $x_{i,j}$ and $y_{i,j}$ by Lemma~\ref{lemma-aux-eps-red},
$2n$ traces $s_i^{\pm 1}$, and $2n\alpha$ atoms from the independence cliques $C_i$).
Finally, the total number of atom creations of a certain type is $n + n-2 = 2n-2 = m-2$.

In the general case, where $m$ is not assumed to be a power of two, we can naturally extend the sequence to $u_1, u_2, \dots , u_{\ell}$ by possibly adding $u_i=1$ for $i>m$ to the smallest power of 2. Hence $\ell\leq 2m$. Substituting $2m$ for $m$ yields the desired bound. Note that by this process, the number of atom creations will not increase. This concludes the proof of the lemma.
\end{proof}
Since by this result we also get a $1$-reducible tuple with at most $\mathcal{O}(m^2)$ many elements for equations $u_1 u_2 \cdots u_m = 1$ over a graph group, this improves the result of \cite{LohreyZ18}.

\begin{bem} \label{rem-atom-creation}
The atom creations that appear in a concrete reduction can be collected into finitely many identities
of the form $a_1 a_2 \cdots a_k =_{\dG_i} b_1 b_2 \cdots b_\ell$ (or $a_1 a_2 \cdots a_k b_\ell^{-1} \cdots b_2^{-1} b_1^{-1} =_{\dG_i} 1$),
where $a_1, a_2, \ldots, a_k, b_1, b_2,\ldots,  b_\ell$ are atoms from the initial sequence that all belong to the same
group $\dG_i$. The new atoms
$a_1 a_2 \cdots a_k$ and $b_1 b_2 \cdots b_\ell$ are created by at most $m-2$ atom creations. Finally, the two resulting atoms
cancel out. Note that $k-1+\ell-1 \leq m-2$, i.e., $k+\ell \leq m$.
\end{bem}	
In case $E = I = \emptyset$ the quadratic dependence on $m$ in Lemma~\ref{lemma-reduction} can be avoided:

\begin{lemma}\label{lemma-reduction-free-product}
Let $m \geq 2$ and $u_1, u_2, \ldots, u_m \in \IRR(R)$. Moreover let $E=I=\emptyset$. If $u_1 u_2 \cdots u_m = 1$ in
the free product $\dG$, then there exists a $1$-reducible refinement of the tuple 
$(u_1, u_2, \ldots, u_m)$ that has length at most $7m-12$ and
there is a reduction of this refinement with at most $m-2$ atom creations.
\end{lemma}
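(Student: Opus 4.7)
The plan is to proceed by induction on $m$ and, at each step, consume a single adjacent pair $(u_i,u_{i+1})$ rather than pairing the whole tuple as Lemma~\ref{lemma-reduction} does; this replaces the doubling recurrence $f(2n)\le 4f(n)+\mathcal{O}(n)$ (which forces quadratic growth) by a linear recurrence $f(m)\le \max(f(m-2)+2,\ f(m-1)+7)$, which gives $f(m)\le 7m-12$. The base case $m=2$ is immediate, since $u_1u_2=1$ in $\dG$ together with irreducibility forces $u_2=u_1^{-1}$ in $A^*$ by Lemma~\ref{rewriting=product}.

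For the inductive step with $m\ge 3$, I first observe that some pair $(u_i,u_{i+1})$ must satisfy $\NF_R(u_iu_{i+1})\ne u_iu_{i+1}$: otherwise the concatenation $u_1\cdots u_m$ would be a non-trivial irreducible trace equal to $1$ in $\dG$, contradicting Lemma~\ref{rewriting=product}. Lemma~\ref{normalform} supplies $u_i=qCs$, $u_{i+1}=s^{-1}C'r$, and $v:=\NF_R(u_iu_{i+1})=q(CC')r$; since $\alpha=1$ in the free product, $D:=CC'$ is either empty or a single atom. In \textbf{Case A} ($v=1$) I have $u_{i+1}=u_i^{-1}$ in $A^*$, delete the pair, apply induction to the length-$(m-2)$ tuple, and reinsert the pair unrefined at cost $+2$, yielding length $\le 7(m-2)-12+2\le 7m-12$. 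In \textbf{Case B} ($v\ne 1$), I replace $(u_i,u_{i+1})$ by $v$ and obtain from the inductive hypothesis a refinement of length $\le 7m-19$ in which $v$ appears as pieces $(v_1,\ldots,v_k)$. By Levi's lemma in the free monoid, there is a unique piece $v_{j^*}$ containing the distinguished position of $D$ (or the $q$-$r$ boundary when $D=1$); I factor $v_{j^*}=x\,D\,y$ and define $(v_1,\ldots,v_{j^*-1},x,C,s)$ as the refinement of $u_i$ and $(s^{-1},C',y,v_{j^*+1},\ldots,v_k)$ as that of $u_{i+1}$, adding at most $5$ elements overall.

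The main obstacle is to verify $1$-reducibility of this modified tuple. The difficulty is that in the inductive reduction $v_{j^*}$ cancels against a single piece $w=v_{j^*}^{-1}$, which can no longer cancel as a whole once $v_{j^*}$ is split into three. The key fact special to the free product ($I=\emptyset$) is that a non-atomic $v_{j^*}$ cannot take part in an atom creation (which requires atomic operands), so it participates in exactly one reduction step; I can therefore cascade a corresponding refinement of $w$ into $(y^{-1},D^{-1},x^{-1})$, costing at most $+2$, after which the three new pieces of $v_{j^*}$ cancel against the three new pieces of $w$ pairwise from the centre, and no further propagation is required. When $v_{j^*}$ is atomic, $x=y=1$ and the split is degenerate, so no cascade is needed. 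The associated reduction first cancels $s$ with $s^{-1}$, atom-creates $C\cdot C'=D$ when $D$ is non-empty (contributing one atom creation), and then runs the inductive reduction with the single $v_{j^*}$-$w$ cancellation replaced by three pairwise cancellations. The total extra length is at most $5+2=7$, giving the tight bound $\le 7m-12$, and the atom-creation count is at most $(m-1-2)+1=m-2$, as required.
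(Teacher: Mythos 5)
Your proposal is correct and follows essentially the same route as the paper's proof: an induction that merges one adjacent pair via Lemma~\ref{normalform} into its normal form, applies the induction hypothesis to the shorter tuple, locates the piece of the inductive refinement containing the newly created atom (resp.\ the merge boundary), splits it together with its unique cancellation partner, and pays one extra atom creation, giving exactly the bounds $7(m-1)-12+7$ and $(m-3)+1$. The only differences are cosmetic: you pick an arbitrary reducible adjacent pair and spell out the pure-cancellation and $v=1$ cases, which the paper treats only implicitly.
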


\begin{proof}
We prove the lemma by induction on $m$. 
The case $m=2$ is trivial (we must have $u_2 = u_1^{-1}$). If $m \geq 3$ then  for the normal form of $u_1 u_2$ there are two cases: either 
$u_1 u_2 \in \IRR(R)$ or $u_1=pas$ and $u_2=s^{-1}bt$
for atoms $a,b$ from the same group $\dG_i$ that do not cancel out.
We consider only the latter case. Let $c = ab$ in $\dG_i$, i.e., $c \in A_i$. 
By the induction hypothesis, the tuple $(pct, u_3, \dots , u_m)$ has a $1$-reducible refinement
\begin{equation} \label{eq-refinement-free}
(v_1, \dots , v_k, w_1, \dots , w_\ell)
\end{equation}
with $k+\ell \leq 7(m-1)-12$  and $pct=v_1 \cdots v_k$, where the latter is an identity between words from $A^*$. 
Moreover, there is a reduction of \eqref{eq-refinement-free} with at most $m-3$ atom creations.
Since $pct=v_1 \cdots v_k$,
one of the $v_j$ ($1\leq j\leq k$) must factorize as $v_j = v_{j,1} c v_{j,2}$ 
such that $p=v_1 \cdots v_{j-1}v_{j,1}$ and $t = v_{j,2} v_{j+1}\cdots v_k$, which implies
$u_1 = v_1 \cdots v_{j-1}v_{j,1} a s$ and $u_2 = s^{-1}b v_{j,2} v_{j+1}\cdots v_k$.
Therefore we have a $1$-reducible tuple of the form 
\begin{equation} \label{eq-refinement-free2}
(v_1, \dots , v_{j-1}, v_{j,1},a ,s , s^{-1}, b, v_{j,2}, v_{j+1}, \dots , v_k , \tilde{w}_1, \dots ,  \tilde{w}_\ell),
\end{equation}
where the sequence $\tilde{w}_i$ is $w_i$ unless $w_i$ cancels out with $v_j$ in our reduction
of \eqref{eq-refinement-free} (there can be only one such $i$), in which case $\tilde{w}_i$ is $(v_{j,2})^{-1}, c^{-1}, (v_{j,1})^{-1}$. 
It follows that the tuple \eqref{eq-refinement-free2} is a refinement of 
$(u_1, u_2, \ldots, u_m)$ with at most $7(m-1)-12+7=7m-12$ words, having a reduction with
at most $m-2$ atom creations.
\end{proof}

\subsection{Graph products preserve knapsack semilinearity}\label{sec-main}

In this section, we assume that every group $\dG_i$ ($i \in \Gamma$) is knapsack-semilinear.
Recall that we fixed the symmetric generating set $\Sigma_i$ for $\dG_i$, which yields the 
generating set $\Sigma = \bigcup_{i \in \Gamma} \Sigma_i$ for the graph product $\dG$.
In this section, we want to show that the graph product $\dG$ is knapsack-semilinear as well.
Moroever, we want to bound the function $\mathsf{E}_{\dG,\Sigma}$ in terms of the functions
$\mathsf{K}_{\dG_i,\Sigma_i}$. Let $\mathsf{K} : \mathbb{N} \times \mathbb{N} \to \mathbb{N}$ be the pointwise
maximum of the functions $\mathsf{K}_{\dG_i,\Sigma_i}(n,m)$. 
We will bound $\mathsf{E}_{\dG, \Sigma}$ in terms of $\mathsf{K}$.

Consider an exponent expression $\E = u_1^{x_1} v_1 u_2^{x_2} v_2 \cdots u_m^{x_m} v_m$, where 
$u_i, v_i$ are words over the generating set $\Sigma$.
Let $g_i$ (resp., $h_i$) be the element of $\dG$ represented by $u_i$ (resp., $v_i$).
We can assume that all $u_i$ and $v_i$ are geodesic
words in the graph product $\dG$.\footnote{Since the word problem for every $\dG_i$ is decidable, also the word
problem for $\dG$ is decidable~\cite{Gre90}, which implies that one can compute a geodesic word for a given
group element of $\dG$.} We will make this assumption throughout this section.
Moreover, we can identify each 
$u_i$ (resp., $v_i$) with the unique irreducible trace from $\IRR(R)$ that represents the group element $g_i$ (resp., $h_i$).
In addition, for each atom $a \in A$ (say $a \in A_i$) that occurs in one of the traces $u_1, u_2, \ldots, u_m, v_1, \ldots, v_m \in \IRR(R)$
a geodesic word $w_a \in \Sigma_i^*$ that evaluates to $a$ in the group $\dG_i$ is given.  This yields geodesic words for 
the group elements $g_1, \ldots, g_m, h_1, \ldots, h_m \in \dG$. The lengths of these words are 
$\norm{u_1}, \ldots, \norm{u_m}, \norm{v_1}, \ldots, \norm{v_m}$ and we have
$\norm{\E} = \norm{u_1} +  \cdots + \norm{u_m} + \norm{v_1} + \cdots + \norm{v_m}$.

We start with the following  preprocessing step.
\begin{lemma} \label{lemma-preproc}
Let $\E$ be an exponent expression over $\Sigma$. 
From $\E$ we can compute a knapsack expression $\E'$
with the following properties:
\begin{itemize}
\item $X_{\E} \subseteq X_{\E'}$,
\item $\norm{\E'} \le 3 \norm{\E}$,
\item $\deg(\E') \le \alpha \cdot \deg(\E)$,
\item every period of $\E'$ is either atomic or well-behaved, and
\item $\Sol_{\dG}(\E) = (K \cap  \Sol_{\dG}(\E')) \rest_{X_{\E}}$ for a semilinear set $K$ of magnitude one.
\end{itemize}
\end{lemma}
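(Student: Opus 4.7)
The plan is to apply Lemma~\ref{power-presentation} to each period of $\E$ and then split the resulting single-variable power into several single-occurrence powers by introducing fresh variables.

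Write $\E = u_1^{x_1} v_1 u_2^{x_2} v_2 \cdots u_m^{x_m} v_m$. For each $i \in \{1, \ldots, m\}$, applying Lemma~\ref{power-presentation} to $u_i$ yields traces $s_i, t_i, v_{i,1}, \ldots, v_{i,k_i} \in \IRR(R)$ such that each $v_{i,j}$ is atomic or well-behaved, $k_i \leq \alpha$, and
\[
u_i^{n} =_{\dG} s_i\, v_{i,1}^{n} \cdots v_{i,k_i}^{n}\, t_i \quad \text{for every } n \geq 0,
\]
with $\norm{s_i} + \norm{t_i} + \sum_{j=1}^{k_i} \norm{v_{i,j}} \leq 3 \norm{u_i}$. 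Since for each atom occurring in $s_i, t_i$ or $v_{i,j}$ a geodesic word over the appropriate $\Sigma_\ell$ is available, I would replace each trace by a concatenation of these geodesic words, obtaining words over $\Sigma$ of length equal to the respective norms. Introduce fresh pairwise distinct variables $y_{i,j}$ ($1 \le i \le m$, $1 \le j \le k_i$), none belonging to $X_{\E}$, and set
\[
\E' \;=\; \prod_{i=1}^{m}\Bigl( s_i\, v_{i,1}^{y_{i,1}} v_{i,2}^{y_{i,2}} \cdots v_{i,k_i}^{y_{i,k_i}}\, t_i\, v_i \Bigr).
\]
Each $y_{i,j}$ appears exactly once, so $\E'$ is a genuine knapsack expression; its periods are the $v_{i,j}$, hence atomic or well-behaved.

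The two numerical bounds are immediate:
\[
\norm{\E'} = \sum_{i=1}^m \Bigl( \norm{s_i} + \sum_{j=1}^{k_i} \norm{v_{i,j}} + \norm{t_i} + \norm{v_i}\Bigr) \leq \sum_{i=1}^m \bigl( 3 \norm{u_i} + \norm{v_i}\bigr) \leq 3\norm{\E},
\]
and $\deg(\E') = \sum_{i=1}^m k_i \leq \alpha \cdot m = \alpha \cdot \deg(\E)$.

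Finally I would link the solution sets by the diagonal constraint. Set $X_{\E'} = X_{\E} \cup \{ y_{i,j} \mid 1 \le i \le m,\, 1 \le j \le k_i\}$ and
\[
K \;=\; \bigl\{ \sigma : X_{\E'} \to \N \;\big|\; \sigma(y_{i,j}) = \sigma(x_i) \text{ for all } i,j \bigr\}.
\]
Then $K$ is clearly linear with $0$ as base and the $\deg(\E')$ diagonal unit-vectors as periods, so $\magn(K) = 1$ (or equal to $0$ if one prefers, in any case $\le 1$). For $\tau \in K$ one has $\tau(\E') =_{\dG} \tau\rest_{X_{\E}}(\E)$ by the displayed identity above, so $\tau \in \Sol_{\dG}(\E') \cap K$ iff $\tau\rest_{X_{\E}} \in \Sol_{\dG}(\E)$, proving $\Sol_{\dG}(\E) = (K \cap \Sol_{\dG}(\E'))\rest_{X_{\E}}$.

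The only mildly subtle point is bookkeeping: one must be careful that passing from the trace-level identity of Lemma~\ref{power-presentation} to a word-level knapsack expression does not inflate the length (hence the use of geodesic words for the atoms of $s_i, t_i, v_{i,j}$) and that the diagonal set $K$ is handled as a subset of $\N^{X_{\E'}}$ and not of $\N^{X_{\E}}$, so that the restriction at the end recovers exactly $\Sol_\dG(\E)$; the rest is a direct application of the machinery already established.
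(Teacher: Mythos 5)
Your construction is essentially the paper's: apply Lemma~\ref{power-presentation} to each power $u_i^{x_i}$, write out the resulting traces as geodesic words, make the expression a knapsack expression by renaming exponents with fresh variables, and recover $\Sol_{\dG}(\E)$ via a diagonal semilinear constraint $K$ of magnitude one; your verification of the bounds $\norm{\E'}\le 3\norm{\E}$ and $\deg(\E')\le\alpha\deg(\E)$ is correct.

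There is, however, one slip that makes the lemma fail as literally stated. You replace \emph{every} exponent in the $i$-th block by a fresh variable $y_{i,j}$, so the original variables $x_i$ do not occur in $\E'$ at all. By the paper's conventions, $X_{\E'}$ is the set of variables occurring in $\E'$ and $\Sol_{\dG}(\E')\subseteq\N^{X_{\E'}}$; you cannot simply declare $X_{\E'}=X_{\E}\cup\{y_{i,j}\}$. With your $\E'$ the first required property $X_{\E}\subseteq X_{\E'}$ is violated, your set $K$ is not a subset of $\N^{X_{\E'}}$ (it refers to the $x_i$, which are not variables of $\E'$), and the restriction $(K\cap\Sol_{\dG}(\E'))\rest_{X_{\E}}$ cannot recover the values of the $x_i$. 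The repair is exactly what the paper does: in the $i$-th block keep the original variable $x_i$ as the exponent of one of the factors (say $v_{i,1}^{x_i}$) and introduce fresh variables only for the remaining $k_i-1$ factors; then $X_{\E}\subseteq X_{\E'}$ holds, $K$ is the diagonal constraint identifying each fresh variable with the corresponding $x_i$ (still of magnitude one), and the rest of your argument goes through verbatim.
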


\begin{proof}
Let $u_1, \ldots, u_m \in \Sigma^*$ be the periods of $\E$.
We can view these words as traces $u_1, \ldots, u_m \in \dM(A,I)$ that are moreover irreducible.
We apply Lemma~\ref{power-presentation} to each power $u_i^{x}$ in $\E$ and obtain an equivalent 
exponent expression $\tilde\E$ of degree $n \le \alpha \cdot m$ and $\norm{\tilde\E} \leq 3\norm{\E}$.
We have $X_{\tilde\E} = X_{\E}$ and  $\Sol_{\dG}(\E) = \Sol_{\dG}(\tilde\E)$.

We now rename in $\tilde\E$ the variables by fresh variables in such a way
that we obtain a knapsack expression $\E'$. Moreover, for every $x \in X_{\E}$ we keep exactly one 
occurrence of $x$ in $\tilde\E$ and do not rename this occurrence of $x$. This implies that there is a  
semilinear set $K \subseteq \mathbb{N}^{X_{e'}}$ of magnitude one such that
$\Sol_{\dG}(\E) = (K \cap \Sol_{\dG}(\E'))\rest_{X_{\E}}$. 
\end{proof}

In case $E = I = \emptyset$ and that $\E$ is a knapsack expression, we can simplify the statement of 
Lemma~\ref{lemma-preproc} as follows:

\begin{bem} \label{rem-preproc-free-product}
Assume that $E = I = \emptyset$ and that $\E$ is a knapsack expression as in Lemma~\ref{lemma-preproc}.
By Remark~\ref{remark-power-presentation} we can compute from $\E$ a 
knapsack expression $\E'$ over $\Sigma$ with the following properties:
\begin{itemize}
\item $\norm{\E'} \leq 3\norm{\E}$,
\item  $\deg(\E') \le \deg(\E)$,
\item every period of $\E'$ is either atomic or well-behaved, and
\item $\Sol_{\dG}(\E) = \Sol_{\dG}(\E')$.
\end{itemize} 
\end{bem}
We now come to the main technical result of Section~\ref{sec-part1}.
As before, we denote with $\alpha$ the size of a largest independence clique in $(\Gamma,E)$.

\begin{theorem} \label{thm-main-technical}
If each group $G_i$, $i\in\Gamma$, is knapsack-semilinear, then their graph product $G=G(\Gamma,E,(G_i)_{i\in\Gamma})$ is knapsack-semilinear as well.
Let $\mathsf{K} : \mathbb{N} \times \mathbb{N} \to \mathbb{N}$ be the pointwise
maximum of the functions $\mathsf{K}_{\dG_i,\Sigma_i}(n,m)$ for $i \in \Gamma$.
Then $\mathsf{E}_{\dG,\Sigma}(n,m) \leq \max\{\mathsf{K}_1 , \mathsf{K}_2\}$
with
\begin{align*}
\mathsf{K}_1 & \le \mathcal{O}\big( (\alpha m)^{\alpha m/2+3} \cdot  \mathsf{K}(6 \alpha m  n,\alpha m)^{\alpha m+3}\big),\\
\mathsf{K}_2 & \le (\alpha m)^{\mathcal{O}(\alpha^2 m)}  \cdot n^{\mathcal{O}(\alpha^2 |\Gamma| m)}.
\end{align*}
\end{theorem}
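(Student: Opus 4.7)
My plan is to reduce the task to three kinds of constraints: knapsack equations inside individual factor groups $\dG_j$ (contributing the $\mathsf{K}_1$ term), trace equations in $\dM(A,I)$ of the form $pu^xs=qv^yt$ (contributing the $\mathsf{K}_2$ term), and linear side conditions on fresh variables; each yields a semilinear set of controlled magnitude and the overall solution set is a bounded boolean combination of them, whose magnitude I bound via Proposition~\ref{intersection-semilinear-sets}.

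First I would apply Lemma~\ref{lemma-preproc} to replace $\E$ by a knapsack expression $\E'$ of length at most $3n$, degree $m'\le\alpha m$, and with every period atomic or well-behaved; this costs only an intersection with a magnitude-one semilinear set and a projection, both preserving the magnitude bound up to the factors already absorbed in $\mathsf{K}_1,\mathsf{K}_2$. Writing $\E'=u_1^{x_1}v_1\cdots u_{m'}^{x_{m'}}v_{m'}$, the condition $\sigma(\E')=1$ in $\dG$ is equivalent to saying that the tuple of irreducible traces $(u_1^{\sigma(x_1)},v_1,\ldots,u_{m'}^{\sigma(x_{m'})},v_{m'})$ multiplies to $1$ in $\dG$. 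By Lemma~\ref{lemma-reduction} this is in turn equivalent to the existence of a $1$-reducible refinement of length $\mathcal{O}(\alpha^3 m^2)$ with at most $2m'-2$ atom creations per type $j\in\Gamma$.

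Next I enumerate, as a finite disjunction, the combinatorial \emph{shape} of such a refinement and reduction: how each $u_i^{x_i}$ is split into blocks, how each $v_i$ is split, and the pattern of swaps, cancellations, and atom creations of each type. For each block arising from a power $u_i^{x_i}$, Lemma~\ref{lemma-simplify-factorization-power} together with Remark~\ref{remark-simplify-factorization-power} lets me replace the block by a concrete form $p_{i,j}u_i^{y_{i,j}}s_{i,j}$ with $|p_{i,j}|,|s_{i,j}|=\mathcal{O}(|\Gamma|\cdot|u_i|)$ and a fresh variable $y_{i,j}$; the $y_{i,j}$ satisfy a linear identity with $x_i$ of magnitude one. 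Fixing a shape, the cancellation steps contribute trace equations $pu^ys=qv^zt$ with well-behaved or atomic $u,v$: by Lemma~\ref{lemma-connected-star} (together with Remark~\ref{rem-2dim-words} in the free-product special case) each is a union of semilinear sets of magnitude at most $\mathcal{O}(m^8\cdot n^{4|\Gamma|})$ when the block lengths and periods are plugged in, giving the $\mathsf{K}_2$ estimate after raising to the power of the ambient dimension via Proposition~\ref{intersection-semilinear-sets}. The atom creations of type $j$ are grouped, via Remark~\ref{rem-atom-creation}, into finitely many equations $a_1^{e_1}\cdots a_k^{e_k}=1$ inside $\dG_j$, where each $a_\ell$ is an atom of an atomic period and each $e_\ell$ is either a fresh variable $y_{i,j}$ or a constant; this is a genuine knapsack equation in $\dG_j$ of length $\mathcal{O}(\alpha m n)$ and degree $\mathcal{O}(\alpha m)$, so its solution set has magnitude at most $\mathsf{K}(6\alpha m n,\alpha m)$.

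To put the pieces together, I observe that for each shape the set of consistent valuations is the intersection of at most $\mathcal{O}(\alpha m)$ semilinear sets from the two sources above, living in a space of dimension $d\le\mathcal{O}(\alpha m)$ after projecting away the $y_{i,j}$. Iterated application of Proposition~\ref{intersection-semilinear-sets} (noting that its bound has the form $\mathcal{O}(d^{d/2+3}s^{d+3})$) inflates magnitude $s$ to roughly $\mathcal{O}(d^{d/2+3}s^{d+3})$, which reproduces the two displayed bounds when $s$ is instantiated with $\mathsf{K}(6\alpha m n,\alpha m)$ or with the Lemma~\ref{lemma-connected-star} estimate. Finally the semilinear set projects along $X_{\E'}\setminus X_{\E}$ and is intersected with the magnitude-one set $K$ from Lemma~\ref{lemma-preproc}, neither of which increases the magnitude bound. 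The main obstacle I foresee is bookkeeping: making sure that the fresh variables introduced when refining each $u_i^{x_i}$ are accounted for consistently across the cancellation constraints and the atom-creation constraints, and that the resulting $\dG_j$-equations are bona fide knapsack expressions (so that $\mathsf{K}$, not a hypothetical exponent-equation measure, governs their magnitude); this is exactly the purpose of first reducing to periods that are atomic or well-behaved.
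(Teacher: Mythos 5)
Your overall plan follows the paper's route (preprocess via Lemma~\ref{lemma-preproc}, guess a $1$-reducible refinement via Lemma~\ref{lemma-reduction}, collect the atom creations into knapsack instances over the $\dG_j$ via Remark~\ref{rem-atom-creation}, and turn the cancellation steps into two-dimensional constraints via Lemma~\ref{lemma-simplify-factorization-power} and Lemma~\ref{lemma-connected-star}), but the final assembly step has a genuine gap. You combine the pieces by ``iterated application of Proposition~\ref{intersection-semilinear-sets}'' to an intersection of $\mathcal{O}(\alpha m)$ semilinear sets in dimension $d=\Theta(\alpha m)$, and claim this inflates the magnitude only to $\mathcal{O}(d^{d/2+3}s^{d+3})$. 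That is not what iteration gives: each application of Proposition~\ref{intersection-semilinear-sets} raises the magnitude to the power $d+3$, so $\Theta(\alpha m)$ iterated intersections would yield a bound of the shape $s^{(d+3)^{\Theta(\alpha m)}}$, doubly exponential in $m$ and far above the stated $\mathsf{K}_1,\mathsf{K}_2$. The point the argument needs, and which your proposal does not secure, is that no such intersections are necessary: after fixing a guess, the constraints are variable-disjoint. Each original variable $x_i$ with atomic period occurs in exactly one local knapsack expression over some $\dG_j$ (its exponent there is $x_i$ itself, not a fresh variable, contrary to what you write), so the sets $\Sol_j$ combine by $\oplus$, which does not increase magnitude; the two-dimensional constraints come with a matching on the fresh variables, so each pair $(x_{i,j},x_{k,l})$ is parametrized by its own parameter $z_{i,j,k,l}$, and these parametrizations are \emph{substituted} into the linear equations $x_i=c_i'+\sum_j x_{i,j}$ rather than intersected. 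This substitution is what produces a semilinear representation of $\Sol_{\dG}(\E')$ directly, with magnitude $\max\{\mathsf{K}(6\alpha m n,\alpha m),\; \mathrm{poly}(m,n)^{\mathcal{O}(\alpha+\alpha|\Gamma|)}\}$ and no exponent $\alpha m$ yet.

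Relatedly, your last claim that intersecting with the magnitude-one set $K$ from Lemma~\ref{lemma-preproc} ``does not increase the magnitude bound'' is both unjustified and inconsistent with your own derivation of the exponents: in the paper this single intersection (needed because the preprocessing renames duplicated variables and $K$ re-imposes the equalities) is applied once, in dimension at most $\alpha m$, via Proposition~\ref{intersection-semilinear-sets}, and it is precisely the source of the factor $(\alpha m)^{\alpha m/2+3}$ and the exponent $\alpha m+3$ in $\mathsf{K}_1$ and $\mathsf{K}_2$. So the quantitative bookkeeping in your write-up attributes the dominant blow-up to a step that would actually overshoot the bound, while dismissing the step that genuinely causes it; as written, the proposal does not establish the stated bounds.
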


\begin{proof}
Consider an exponent expression 
$\E = u_1^{x_1} v_1 u_2^{x_2} v_2 \cdots u_m^{x_m} v_m$. 
Let us denote with $A(\E) = \alp(u_1 v_1 \cdots u_m v_m) \subseteq A$ the set of all atoms that appear in the 
traces $u_i,v_i$. Finally let $\mu(\E) = \max\{\norm{a} \mid a \in A(\E)\}$
and let $\lambda(\E)$ be the maximal length $|t|$ where $t$ is one of the traces $u_1, u_2, \ldots, u_m, v_1, \ldots, v_m$.
We clearly have $\mu(\E) \le \norm{\E}$ and $\lambda(\E) \le \norm{\E}$.

Let us first assume that $\E$ is a knapsack expression (i.e., $x_i \neq x_j$ for $i \neq j$) 
where every period $u_i$ is either an atom
or a well-behaved trace (see Lemma~\ref{lemma-preproc}).

In the following we describe an algorithm that computes a semilinear representation of $\Sol_{\dG}(\E)$
(for $\E$ satisfying the conditions from the previous paragraph). At the same time, we will compute the magnitude
of this semilinear representation. The algorithm transforms logical statements into equivalent logical statements
(we do not have to define the precise logical language; the meaning of the statements should be always clear).
Every statement contains the variables $x_1, \ldots, x_m$ from our knapsack expression and equivalence of two statements means
that for every valuation $\sigma : \{x_1, \ldots, x_m\} \to \N$ the two statements yield the same truth value.
We start with the statement $\E = 1$. In each step we transform the current
statement $\Phi$ into an equivalent disjunction $\bigvee_{i=1}^n \Phi_i$. We can therefore view the whole process
as a branching tree, where the nodes are labelled with statements. If a node is labelled with $\Phi$ and its children are labelled
with $\Phi_1, \ldots, \Phi_n$ then $\Phi$ is equivalent to $\bigvee_{i=1}^n \Phi_i$. The leaves of the tree are labelled with 
semilinear constraints of the form $(x_1, \ldots, x_m) \in L$ for semilinear sets $L$. Hence, the solution set
$\Sol_{\dG}(\E)$ is the union of all semilinear sets that label the leaves of the tree. A bound on the magnitude 
of these semilinear sets yields a bound on the magnitude 
of $\Sol_{\dG}(\E)$. Therefore, we can restrict our analysis to a single branch of the tree.
We can view this branch as a sequence of nondeterministic guesses. Some guesses lead
to dead branches because the corresponding statement is unsatisfiable. We will speak
of a bad guess in such a situation.

Let $N_a \subseteq [1,m]$ be the set of indices such that $u_i$ is atomic and 
let $N_{\overline{a}} = [1,m] \setminus N_a$ be the set of indices such that $u_i$
is not atomic (and hence a well-behaved trace). For better readability, we write $a_i$ for the atom $u_i$
in case $i \in N_a$.
Define $X_a = \{ x_i \mid i \in N_a\}$ and $X_{\overline{a}} = \{ x_i \mid i \in N_{\overline{a}} \}$.  For $i \in N_a$ let 
$\gamma(i) \in \Gamma$ be the index with $u_i \in A_{\gamma(i)}$.

\medskip
\noindent
{\em Step 1: Eliminating trivial powers.}
In a first step we guess a set $N_1 \subseteq N_a$ of indices with the meaning that for $i \in N_1$
the power $a_i^{x_i}$ evaluates to the identity element of the group $\dG_{\gamma(i)}$. To express this we 
continue with the formula
\begin{equation} \label{formula-Phi}
\Phi[N_1] = (e[N_1] = 1) \wedge \bigwedge_{i \in N_1} a_i^{x_i} =_{\dG_{\gamma(i)}} 1,
\end{equation}
where $e[N_1]$ is the knapsack expression obtained from $e$ by deleting all powers $a_i^{x_i}$ with $i \in N_1$.
Note that the above constraints do not exclude that a power $u_i^{x_i}$ with 
$i \in [1,m] \setminus N_1$ evaluates to the identity element.
This will not cause any trouble for the following arguments.
Clearly, the initial equation $e=1$ is equivalent to the formula
$\bigvee_{N_1 \subseteq N_a} \Phi[N_1]$.

In the following we transform every equation $e[N_1] = 1$ into a formula $\Psi[N_1]$ such that the following hold for every
valuation $\sigma : \{ x_i \mid i \in [1,m] \setminus N_1 \} \to \N$:
\begin{enumerate}[(1)]
\item if $\Psi[N_1]$ is true under $\sigma$ then $\sigma(e[N_1]) =_{\dG} 1$,
\item if $a_i^{\sigma(x_i)} \neq_{\dG_{\gamma(i)}} 1$ for all $i \in N_a \setminus N_1$ and $\sigma(e[N_1]) =_{\dG} 1$ then
$\Psi[N_1]$ is true under $\sigma$.
\end{enumerate}
This implies that $\bigvee_{N_1 \subseteq N_a} \Phi[N_1]$ (and hence $e=1$) is equivalent to the formula
$$
\bigvee_{N_1 \subseteq N_a} (\Psi[N_1] \wedge \bigwedge_{i \in N_1} a_i^{x_i} =_{\dG_{\gamma(i)}} 1).
$$
{\em Step 2: Applying Lemma~\ref{lemma-reduction}.}  
We construct the formula $\Psi[N_1]$ from the knapsack expression $e[N_1]$ using  Lemma~\ref{lemma-reduction}. More precisely, we construct 
$\Psi[N_1]$ by nondeterministically guessing the following data:
\begin{enumerate}[(i)]
\item factorizations $v_i = v_{i,1} \cdots v_{i,\ell_i}$ in $\dM(A,I)$ of all non-trivial traces $v_i$. Each factor $v_{i,j}$ must be nontrivial too.
\item ``symbolic factorizations'' $u_i^{x_i} = y_{i,1} \cdots y_{i,k_i}$ for all $i \in N_{\overline{a}}$. 
The numbers $k_i$ and $\ell_i$ must sum up to at most $28\alpha m^2$ (this number is obtained by replacing $m$ by $2m$
in Lemma~\ref{lemma-reduction}). The $y_{i,j}$ are existentially quantified variables that take values 
in $\IRR(R)$ and which will be eliminated later.
\item non-empty alphabets $A_{i,j} \subseteq \alp(u_i)$ for each symbolic factor $y_{i,j}$ ($i \in N_{\overline{a}}$, $1 \leq j \leq k_i$)
with the meaning that $A_{i,j}$ is the alphabet of atoms that appear in  $y_{i,j}$.
\item a reduction (according to Definition~\ref{def-reduction}) of the resulting refined factorization of 
$u_1^{x_1} v_1 u_2^{x_2} v_2 \cdots u_m^{x_m} v_m$ with at most $2m-2$ atom creations of each type $i \in \Gamma$.
\end{enumerate}
Note that every factor  $a_i^{x_i}$ with $i \in N_{a} \setminus N_1$ evaluates (for a given valuation) either to an atom
from $A_{\gamma(i)}$ or to the identity element. Hence, there is no need to further  factorize such a power $a_i^{x_i}$.
In our guessed reduction we treat $a_i^{x_i}$ as a symbolic atom (although it might happen that $a_i^{\sigma(x_i)} = 1$ 
 for a certain valuation $\sigma$; but this will not make the above statements (1) and (2) wrong). 

We can also guess $k_i = 0$ in (ii). In this case, we 
can replace $u_i^{x_i}$ in $e[N_1]$ by the empty trace and add the constraint $x_i = 0$
(note that a well-behaved trace $u_i \neq 1$ represents an element of the graph product $\dG$ 
without torsion). Hence, in the following
we can assume that the $k_i$ are not zero.
Some of the $y_{i,j}$ must be atoms since they take part in an 
atom creation in our guessed reduction. Such an $y_{i,j}$ is 
replaced by a nondeterministically guessed atom $a_{i,j}$ from the atoms in $u_i$. 

The guessed alphabetic constraints from (iii) must be consistent with the independencies from our guessed reduction in (iv).
This means that if for instance $y_{i,j}$ and $y_{k,\ell}$ are swapped in the reduction then we must have $A_{i,j} \times A_{j,k} \subseteq I$.
Here comes a subtle point: Recall that each power $a^{x_i}$ ($i \in N_a \setminus N_1$) evaluates for a given valuation either to an atom
from $A_{\gamma(i)}$ or to the identity element. When checking the consistency of the alphabetic constraints with the guessed reduction 
we make the (pessimistic) assumption that every $a^{x_i}$ evaluates to an atom from $A_{\gamma(i)}$. This is justified below.

For every specific guess in (i)--(iv)
we write down the existentially quantified conjunction of the following formulas:
\begin{itemize}
\item the equation $u_i^{x_i} = y_{i,1} \cdots y_{i,k_i}$ from (ii) (every trace-variable $y_{i,j}$ is existentially quantified),
\item all trace equations that result from cancellation steps in the guessed reduction,
\item all ``local'' identities that result from the atom creations in the guessed reduction,
\item all alphabetic constraints from (iii) and 
\item all constraints $x_i = 0$ in case we guessed $k_i = 0$ in (ii). 
\end{itemize}
The local identities in the third point involve the above atoms $a_{i,j}$ and the powers $a_i^{x_i}$ for 
$i \in N_a \setminus N_1$. According to Remark~\ref{rem-atom-creation} they are combined into
several knapsack expressions over the groups $\dG_i$.

The formula $\Psi[N_1]$ is the disjunction of the above existentially quantified conjunctions, taken over all possible guesses in (i)--(iv).
It is then clear that the above points (1) and (2) hold. Point (2) follows immediately from 
Lemma~\ref{lemma-reduction}. For point (1) note that each of the existentially quantified conjunctions in $\Psi[N_1]$ yields the identity $e[N_1]=1$, irrespective of 
whether a power $a_i^{x_i}$ is trivial or not.

So far, we have obtained a disjunction of existentially quantified conjunctions. Every conjunction involves the 
equations $u_i^{x_i} = y_{i,1} \cdots y_{i,k_i}$ from (ii), trace equations that result from cancellation steps (we will deal with them in step 4 below),
local knapsack expressions over the groups $\dG_i$, alphabetic constraints for the variables $y_{i,j}$ and constraints $x_i = 0$ (if $k_i = 0$).
In addition we have the identities $a_i^{x_i} =_{\dG_{\gamma(i)}} 1$ ($i \in N_1$) from \eqref{formula-Phi}. In the following we deal with a single
existentially quantified conjunction of this form.

\medskip
\noindent
{\em Step 3: Isolating the local knapsack instances for the groups $\dG_i$.}  
In our existentially quantified conjunction we have knapsack expressions $\E_1, \ldots, \E_q$ over the groups $\dG_j$ ($j \in \Gamma$).
These knapsack expressions involve the atoms $a_{i,j}$ and the symbolic expressions $a_i^{x_i}$ with $i \in N_a$.
Note that every identity $a_i^{x_i} =_{\dG_{\gamma(i)}} 1$ ($i \in N_1$) yields the knapsack expression 
$a_i^{x_i}$. Each of the expressions $\E_j$ is built from at most $2m$
atom powers $a_i^{x_i}$ and atoms $a_{i,j}$ (since for every $j \in \Gamma$ there are at most $2m-2$ atom creations
of type $j$) and its degree is at most $m$ (since there 
are at most $m$ atom powers $a_i^{x_i}$). All atoms $a_i$ and $a_{i,j}$ belong to 
$A(\E)$. This yields the bound $\norm{\E_j} \le 2m \mu(\E)$ for $1 \leq j \leq q$.
We can assume that each expression $\E_j$ contains at least one  atom power $u_i^{x_i}$
(identities between the explicit atoms $a_{i,j}$ can be directly verified; if they do not hold,
one gets a bad guess).
Moreover, note that every atom power $a_i^{x_i}$ with $i \in N_a$ occurs in exactly one $\E_j$. 
Assume that the knapsack expression $\E_j$ is defined over the group $H_j \in \{ \dG_i \mid i \in \Gamma\}$.
The solution sets $\Sol_j = \Sol_{H_j}(\E_j)$ of these expressions are semilinear by the assumption 
on the groups $\dG_i$. Each $\Sol_j$ has some dimension $d_j \le m$ (which is the number of symbolic
atoms in $\E_j$), where $\sum_{j=1}^q d_j = |N_a|$ and the magnitude of $\Sol_j$ is bounded by 
$\mathsf{K}(2m \mu(\E), m) \le \mathsf{K}(2m \norm{\E}, m)$. Finally, we can combine these sets $\Sol_j$ into the single semilinear set
$S_a = \bigoplus_{j=1}^q \Sol_j \subseteq \N^{X_a}$ of dimension $|N_a|$ and magnitude at most
$\mathsf{K}(2m \norm{\E},m)$. Recall that the sets $\Sol_j$ refer to pairwise 
disjoint sets of variables. 
For the variables $x_i \in X_a$ we now obtain the semilinear contraint $(x_i)_{i \in N_a} \in S_a$.

\medskip
\noindent
{\em Step 4: Reduction to two-dimensional knapsack instances.}
Let us now deal with the cancellation steps from our guessed reduction.
From these reduction steps we will produce two-dimensional knapsack instances
on pairwise disjoint variable sets.

If two explicit factors $v_{i,j}$ and 
$v_{k,\ell}$ (from (i) in step 2) cancel out in the reduction, we must have $v_{k,\ell} = v_{i,j}^{-1}$; otherwise our previous guess
was bad. If a symbolic factor $y_{i,j}$ and an explicit factor $v_{k,\ell}$ cancel out, then we can replace
$y_{i,j}$ by $v_{k,\ell}^{-1}$. Before doing this, we check whether $\alp(v_{k,\ell}^{-1}) = A_{i,j}$ and if this 
condition does not hold, then we obtain again a bad guess. Let $S$ be the set of pairs $(i,j)$ such that
the symbolic factor $y_{i,j}$ still exists after this step. On this set $S$ 
there must exist a matching $M \subseteq \{ (i,j,k,\ell) \mid (i,j), (k,\ell) \in S \}$ such that
$y_{i,j}$ and $y_{k,\ell}$ cancel out in our reduction if and only if $(i,j,k,\ell) \in M$.
We have $(i,j,k,\ell) \in M$ if and only if $(k,\ell,i,j) \in M$.

Let us write the new symbolic factorization of $u_i^{x_i}$ as 
$u_i^{x_i} = \tilde{y}_{i,1} \cdots \tilde{y}_{i,k_i}$, where every
$\tilde{y}_{i,j}$ is either the original symbolic factor $y_{i,j}$ (in case $(i,j) \in S$) or a concrete
trace $v_{k,\ell}^{-1}$ (in case $y_{i,j}$ and $v_{k,\ell}$ cancel out in our reduction) or an atom $a_{i,j} \in \alp(u_i)$ (that was guessed in step 2).
It remains to describe the set of all tuples $(x_1,\ldots, x_m)$ that satisfy a statement of the following form:  there exist
traces $y_{i,j}$ ($(i,j) \in S$) such that
 the following hold:
\begin{enumerate}[(a)]
\item $u_i^{x_i} = \tilde{y}_{i,1} \cdots \tilde{y}_{i,k_i}$ in $\dM(A,I)$ for all $i \in N_{\overline{a}}$
\item $\alp(y_{i,j}) = A_{i,j}$ for all $(i,j) \in S$,
\item $y_{i,j} = y_{k,\ell}^{-1}$ in $\dM(A,I)$ for all $(i,j,k,\ell) \in M$
\item $(x_i)_{i \in N_a} \in S_a$
\end{enumerate}
In the next step, we eliminate the  trace equations $u_i^{x_i} = \tilde{y}_{i,1} \cdots \tilde{y}_{i,k_i}$  
($i \in N_{\overline{a}}$). We apply to each of these trace equations Lemma~\ref{lemma-simplify-factorization-power}
(or Remark~\ref{remark-simplify-factorization-power}). For every $i \in N_{\overline{a}}$ we guess a subset
$K_i \subseteq [1,k_i]$, an integer $0 \le c_i \leq  |\Gamma| \cdot (k_i-1)$ and traces $p_{i,j}, s_{i,j}$ 
with $\alp(p_{i,j}) \subseteq \alp(u_i) \supseteq \alp(s_{i,j})$ and 
$|p_{i,j}|, |s_{i,j}| \leq |\Gamma| \cdot (k_i-1) \cdot |u_i| \leq |\Gamma| \cdot (k_i-1) \cdot  \lambda(\E)$,
and replace $u_i^{x_i} = \tilde{y}_{i,1} \cdots \tilde{y}_{i,k_i}$ by the following statement:
there exist integers $x_{i,j} > 0$ ($j \in K_i$)
 such that
 \begin{itemize}
 \item $x_i = c_i +  \sum_{j \in K_i} x_{i,j}$,
 \item $\tilde{y}_{i,j} = p_{i,j}  u_i^{x_{i,j}} s_{i,j}$ for all $j \in K_i$,
 \item $\tilde{y}_{i,j} = p_{i,j} s_{i,j}$ for all $j \in  [1,k_i] \setminus K_i$.
 \end{itemize}
 At this point we can check whether the alphabetic constraints $\alp(y_{i,j}) = A_{i,j}$ for $(i,j) \in S$
 hold (note that an equation $y_{i,j} = p_{i,j} s_{i,j}$ or $y_{i,j} = p_{i,j} u_i^{x_{i,j}} s_{i,j}$ with $x_{i,j} > 0$
 determines the alphabet of $y_{i,j}$).
 Equations $\tilde{y}_{i,j} = p_{i,j} s_{i,j}$, where $\tilde{y}_{i,j}$ is an explicit trace
 can be checked and possibly lead to a bad guess.
 From an equation $\tilde{y}_{i,j} = p_{i,j}  u_i^{x_{i,j}} s_{i,j}$, where $\tilde{y}_{i,j}$ is an explicit trace,
 we can determine a unique solution for $x_{i,j}>0$ (if it exists) and substitute this value into the equation 
 $x_i = c_i +  \sum_{j \in K_i} x_{i,j}$.  Note that we must 
 have $x_{i,j} \le |\tilde{y}_{i,j}| \leq \lambda(\E)$, since $\tilde{y}_{i,j}$ is an atom or a factor of a trace $v_k^{-1}$.
 Similarly, an equation $y_{i,j} = p_{i,j} s_{i,j}$ with $(i,j) \in S$ allows us to replace  the symbolic factor 
 $y_{i,j}$ by the concrete trace $p_{i,j} s_{i,j}$ and the unique symbolic factor $y_{k,\ell}$ with $(i,j,k,\ell) \in M$
 by the concrete trace $s_{i,j}^{-1} p_{i,j}^{-1}$. If we have an equation 
 $y_{k,\ell} = p_{k,\ell}  s_{k,\ell}$ then we check whether $s_{i,j}^{-1} p_{i,j}^{-1} = p_{k,\ell}  s_{k,\ell}$ holds. Otherwise
 we have an equation 
  $y_{k,\ell} = p_{k,\ell}  u_k^{x_{k,\ell}} s_{k,\ell}$, and we can compute the unique non-zero solution 
  for $x_{k,\ell}$ (if it exists). Note that $x_{k,\ell} \le |s_{i,j}^{-1} p_{i,j}^{-1}| \le 2 |\Gamma| \cdot (k_i-1) \cdot  \lambda(\E) \in \mathcal{O}(|\Gamma|^2 \cdot m^2 \cdot \lambda(\E))$.
We then replace $x_{k,\ell}$ in the equation $x_k = c_k +  \sum_{\ell \in K_k} x_{k,\ell}$
by this unique solution. 

By the above procedure, our statement  (a)--(d) (with existentially quantified traces $y_{i,j}$) is 
transformed nondeterministically into a statement of the following form:
there exist integers $x_{i,j} > 0$ ($i \in N_{\overline{a}}$, $j \in K'_i$) such that 
the following hold:
\begin{enumerate}[(a)]
\item $x_i = c'_i +  \sum_{j \in K'_i} x_{i,j}$ for $i \in N_{\overline{a}}$,
\item $p_{i,j} u_i^{x_{i,j}} s_{i,j} = s_{k,\ell}^{-1} (u_k^{-1})^{x_{k,\ell}} p_{k,\ell}^{-1}$ in $\dM(A,I)$ for all $(i,j,k,\ell) \in M'$,
\item $(x_i)_{i \in N_a} \in S_a$.
\end{enumerate}
Here, $K'_i \subseteq K_i \subseteq [1,k_i]$ is a set of size at most $k_i \leq 28\alpha m^2$, 
$M' \subseteq M$ is a new matching relation (with $(i,j,k,\ell) \in M'$ if and only if $(k,\ell,i,j) \in M'$), and
$c'_i \le |\Gamma| \cdot (k_i-1) + k_i \cdot  \mathcal{O}(|\Gamma|^2 \cdot m^2 \cdot \lambda(\E)) \le 
\mathcal{O}(|\Gamma|^3 \cdot m^4\cdot \lambda(\E))$.

\medskip
\noindent
{\em Step 5: Elimination of two-dimensional knapsack instances.}
The remaining knapsack equations $p_{i,j} u_i^{x_{i,j}} s_{i,j} = s_{k,\ell}^{-1} (u_k^{-1})^{x_{k,\ell}} p_{k,\ell}^{-1}$ in (b)
are two-dimensional and can be eliminated with 
Lemma~\ref{lemma-connected-star}. By this lemma, every trace equation 
$$p_{i,j}  u_i^{x_{i,j}} s_{i,j}  = s_{k,\ell}^{-1} (u^{-1}_k)^{x_{k,\ell}} p_{k,\ell}^{-1}$$
(recall that all $u_i$ are connected, which is assumed in Lemma~\ref{lemma-connected-star})
can be nondeterministically replaced by a semilinear constraint
$$
(x_{i,j}, x_{k,\ell}) \in \{ (a_{i,j,k,\ell} + b_{i,j,k,\ell} \cdot z, a_{k,\ell,i,j} + b_{k,\ell,i,j} \cdot z) \mid z \in \mathbb{N} \}.
$$
For the numbers $a_{i,j,k,\ell}, b_{i,j,k,\ell}, a_{k,\ell,i,j}, b_{k,\ell,i,j}$ we obtain the bound
$$
a_{i,j,k,\ell}, b_{i,j,k,\ell}, a_{k,\ell,i,j}, b_{k,\ell,i,j} \in \mathcal{O}(\mu^8 \cdot \nu^{4 |\Gamma|}),
$$
where, by Lemma~\ref{lemma-prefixes},
\begin{equation} \label{eq-mu}
\mu = \max\{ \rho(p_{i,j}), \rho(p_{k,\ell}), \rho(s_{i,j}), \rho(s_{k,\ell})\} \le  \mathcal{O}(|\Gamma|^{2\alpha} \cdot 28^\alpha \cdot m^{2 \alpha } \cdot \lambda(\E)^\alpha)
\end{equation}
and 
\begin{equation}  \label{eq-nu}
\nu = \max\{ \rho(u_i), \rho(u_k) \} \le \mathcal{O}(\lambda(\E)^\alpha).
\end{equation}
Note that $\rho(t) = \rho(t^{-1})$ for every trace $t$. 
Moreover, note that we have the constraints $x_{i,j}, x_{k,\ell} > 0$.
Hence, if our nondeterministic guess yields $a_{i,j,k,\ell} = 0$ or $a_{k,\ell,i,j} = 0$ then
we make the replacement $a_{i,j,k,\ell} = a_{i,j,k,\ell} + b_{i,j,k,\ell}$ and 
$a_{k,\ell,i,j} = a_{k,\ell,i,j} + b_{k,\ell,i,j}$. If after this replacement we still have
$a_{i,j,k,\ell} = 0$ or $a_{k,\ell,i,j} = 0$ then our guess was bad.

At this point, we have obtained a statement of the following form:
there exist $z_{i,j,k,\ell} \in \N$ (for $(i,j,k,\ell) \in M'$) with $z_{i,j,k,\ell} = z_{k,\ell,i,j}$ and such that
\begin{enumerate}[(a)]
\item $x_i = c'_i+ \sum_{(i,j,k,\ell) \in M'} (a_{i,j,k,\ell} + b_{i,j,k,\ell} \cdot z_{i,j,k,\ell})$ for $i \in N_{\overline{a}}$, and
\item $(x_i)_{i \in N_a} \in S_a$.
\end{enumerate}
Note that the sum in (a)  contains $|K'_i| \leq 28m^2\alpha$ many summands (since for every
$j \in K'_i$ there is a unique pair $(k,\ell)$ with $(i,j,k,\ell) \in M'$).
Hence, (a) can be written as $x_i = c''_i+ \sum_{(i,j,k,\ell) \in M'} b_{i,j,k,\ell} \cdot z_{i,j,k,\ell}$ with
\begin{eqnarray*}
c''_i & = & c'_i + \sum_{(i,j,k,\ell) \in M'} a_{i,j,k,\ell} \\
& \le &  \mathcal{O}( |\Gamma|^3 \cdot m^4\cdot \lambda(\E)) +  \mathcal{O}(\alpha \cdot m^2 \cdot \mu^8 \cdot \nu^{4 |\Gamma|}) \\
& \le & \mathcal{O}(|\Gamma|^{16 \alpha +1} \cdot 28^{8 \alpha} \cdot m^{16\alpha +2} \cdot \lambda(\E)^{8 \alpha + 4 \alpha |\Gamma|}) \\
& \le & \mathcal{O}(|\Gamma|^{16 \alpha +1} \cdot 28^{8 \alpha} \cdot m^{16\alpha +2} \cdot \norm{\E}^{8 \alpha + 4 \alpha |\Gamma|})
\end{eqnarray*}
(since $\lambda(\E) \leq \norm{\E}$).
The bound in the last line is also an upper bound for the numbers $b_{i,j,k,\ell}$. 
Hence, we have obtained a semilinear representation for $\Sol_{\dG}(\E)$ whose magnitude is bounded by
$\max\{ \mathsf{K}_1, \mathsf{K}_2 \}$, where 
$$
\mathsf{K}_1  \le   \mathsf{K}(2 m \norm{\E},m)
$$
(this is our upper bound for the magnitude of the semilinear set $S_a$) and
$$
\mathsf{K}_2  \le \mathcal{O} \big(|\Gamma|^{16 \alpha +1} \cdot 28^{8 \alpha} \cdot m^{16\alpha +2} \cdot \norm{\E}^{8 \alpha + 4 \alpha |\Gamma|}\big).
$$
{\em Step 6: Integration of the preprocessing step.}
Recall that so far we only considered the case where $\E$ is a knapsack expression having the form of the $\E'$ in 
Lemma~\ref{lemma-preproc}.
Let us now consider an arbitrary exponent expression $\E$ of degree $m$.
By Lemma~\ref{lemma-preproc} we have
$\Sol_{\dG}(\E) = (K \cap  \Sol_{\dG}(\E')) \rest_{X_{\E}}$ 
where $K$ is semilinear of magnitude one and 
$\E'$ has degree at most $\alpha \cdot m$ and satisfies 
$\norm{\E'} \leq 3 \norm{\E}$. We can apply the upper bound shown so far to 
$\E'$. Hence, the magnitude of $\Sol_{\dG}(\E')$ is bounded by
$\max\{ \mathsf{K}'_1, \mathsf{K}'_2 \}$, where 
$$
\mathsf{K}'_1  \le  \mathsf{K}(6 \alpha m \norm{\E},\alpha m)
$$
and
$$
\mathsf{K}'_2   \le \mathcal{O}\big(|\Gamma|^{16 \alpha +1} \cdot 28^{8 \alpha} \cdot (\alpha m)^{16\alpha +2} \cdot (3 \norm{\E})^{8 \alpha + 4 \alpha |\Gamma|}\big).
$$
It remains to analyze the influence of intersecting with $K$. For this, we can apply 
Proposition~\ref{intersection-semilinear-sets}, which yields for the magnitude the upper bound 
$\max\{ \mathsf{K}_1, \mathsf{K}_2 \}$, where 
\begin{eqnarray*}
\mathsf{K}_1 &  \le &  \mathcal{O}\big( (\alpha m)^{\alpha m/2+3} \cdot  \mathsf{K}(6 \alpha m  \norm{\E},\alpha m)^{\alpha m+3}\big) 
\end{eqnarray*}
and
\begin{eqnarray*}
\mathsf{K}_2 & \le & \mathcal{O}\big( (\alpha m)^{\alpha m/2+3} \cdot \mathcal{O}\big(|\Gamma|^{32 \alpha +3} \cdot 28^{8 \alpha} \cdot (\alpha m)^{16\alpha +2} \cdot (3 \norm{\E})^{8 \alpha + 4 \alpha |\Gamma|}\big)^{\alpha m+3} \big) \\
& \le & (\alpha m)^{\mathcal{O}(\alpha^2 m)}  \cdot \norm{\E}^{\mathcal{O}(\alpha^2 |\Gamma| m)} .
\end{eqnarray*}
This concludes the proof of the theorem.
\end{proof}

\begin{bem} \label{rem:solution-set-GP}
Assume that $G$ is a fixed graph product (hence, $|\Gamma|$ is a constant).
Consider again the case that $\E$ is a knapsack expression (i.e., $x_i \neq x_j$ for $i \neq j$) 
where every period $u_i$ is either an atom or a well-behaved trace. Let $m = \deg(\E)$.
In the above proof, we show that
the set of solutions $\Sol_{\dG}(\E)$ can be written as a finite union 
$$
\Sol_{\dG}(\E) = \bigcup_{i=1}^p \bigoplus_{j=1}^{q_i} \Sol_{H_{i,j}}(\E_{i,j}) \oplus L_i
$$
such that the following hold for every $1 \leq i \leq p$:
\begin{itemize}
\item every $H_{i,j}$ is one of the groups $G_k$ and $\E_{i,j}$ is a knapsack expression over the group $H_{i,j}$.
The variable sets $X_{\E_{i,j}}$ ($1 \leq j \leq q_i$) form a partition of the set $X_a$ (the variables corresponding to atomic periods).
\item Every $\E_{i,j}$ is a knapsack expression of size at most $2m \norm{\E}$ and degree at most $m$ (see step 3 in the above proof).
\item   The set $L_i$ is semilinear of magnitude 
$\mathcal{O} \big(|\Gamma|^{16 \alpha +1} 28^{8 \alpha}  m^{16\alpha +2} \norm{\E}^{8 \alpha + 4 \alpha |\Gamma|}\big) = 
\mathcal{O} \big(m^{16\alpha +2}  \norm{\E}^{8 \alpha + 4 \alpha |\Gamma|}\big)$
(see step 5 in the above proof).
\end{itemize}
Here, the indices $i \in [1,p]$ correspond to the guessed data in the above prove.
Moreover, given $i \in [1,p]$ (i.e., a specific guess), one can compute the knapsack expressions $\E_{i,j}$ ($1 \leq j \leq q_i$)
and a semilinear representation of $L_i$ in polynomial time. This yields a nondeterministic reduction of the knapsack problem
for the graph product $G$ to the knapsack problems for the groups $G_i$ ($i \in \Gamma$), assuming the input expression $\E$
satisfies the above restriction. Recall that in general, direct products do not preserve decidability of the knapsack problem.
\end{bem}

The reader might wonder, whether we can obtain a bound for the function $\mathsf{K}_{G,\Sigma}$ in terms 
of the function $\mathsf{K}_{G_i,\Sigma_i}$, which is better than the corresponding bound for 
$\mathsf{E}_{G,\Sigma}$ from Theorem~\ref{thm-main-technical}. This is actually not the case (at least with our proof
technique): a power of the form $u^x$ where $u = u_1 u_2 \in \dM(A,I)$ with $u_1 \mathrel{I} u_2$ is equivalent to $u_1^x u_2^x$.
Hence, powers $u^x$ with $u$ a non-connected trace naturally lead to a duplication
of the variable $x$ (and hence to an exponent expression which is no longer a knapsack expression). 
This is the reason why we bounded the (in general faster growing) function $\mathsf{E}_{G,\Sigma}$ in terms
of the functions $\mathsf{K}_{G_i,\Sigma_i}$ in Theorem~\ref{thm-main-technical}.

An application of Theorem~\ref{thm-main-technical} is the following:

\begin{theorem}
Let $G$ be a graph product of hyperbolic groups. Then solvability of exponent equations over $G$ belongs to {\bf NP}.
\end{theorem}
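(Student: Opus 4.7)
The plan is to combine Theorem~\ref{thm-main-technical}, or rather its algorithmic refinement Remark~\ref{rem:solution-set-GP}, with the polynomial-time solvability of knapsack in hyperbolic groups established in \cite{LOHREY2019}. From that paper one can extract two quantitative facts: for every hyperbolic group $H$ with finite symmetric generating set $\Sigma_H$, $\mathsf{K}_{H,\Sigma_H}(n,m) = n^{O(1)}$, and moreover a semilinear representation of $\Sol_H(\E)$ of polynomial size can be produced in polynomial time (the LogCFL algorithm of \cite{LOHREY2019} already exhibits such representations).

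Given a list $\E_1,\ldots,\E_k$ of exponent expressions over $G$ of total size $n$, I first apply Lemma~\ref{lemma-preproc} to each $\E_i$, obtaining in polynomial time a knapsack expression $\E'_i$ whose periods are atomic or well-behaved, together with a semilinear constraint $K_i$ of magnitude one such that $\Sol_G(\E_i) = (K_i \cap \Sol_G(\E'_i))\rest_{X_{\E_i}}$. To this I add polynomially many linking equations $x = y$ of magnitude one, identifying variables of different $\E'_i$ that originate from the same variable of some $\E_i$. The resulting combined system is solvable if and only if the original one is.

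Next, since $G$ is fixed, $|\Gamma|$ and the clique number $\alpha$ are constants, so Remark~\ref{rem:solution-set-GP} applies to each $\E'_i$. I nondeterministically guess one branch in the decomposition of $\Sol_G(\E'_i)$ for every $i$. Each such guess yields, in polynomial time, knapsack expressions $\E_{i,j}$ over the hyperbolic vertex groups $H_{i,j}$ of size polynomial in $\norm{\E'_i}$, together with a semilinear set $L_i$ of polynomial magnitude presented by a polynomial-size representation. For each $\E_{i,j}$ I then compute in polynomial time a polynomial-size semilinear representation of $\Sol_{H_{i,j}}(\E_{i,j})$ via the knapsack algorithm for hyperbolic groups. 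The whole system now becomes an existential Presburger formula of polynomial size, asserting membership in an intersection of polynomially many polynomial-size semilinear sets subject to polynomial-size linking constraints. Satisfiability of such a formula is in \textbf{NP} by the standard ``guess a witness of polynomial bit-length'' argument.

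The main obstacle is justifying the two quantitative claims about hyperbolic groups cited at the outset, namely the polynomial bound on $\mathsf{K}_{H,\Sigma_H}$ and the polynomial-time computability of a polynomial-size semilinear representation of $\Sol_H(\E)$. Both are implicit in the LogCFL procedure of \cite{LOHREY2019}, and isolating this content is the essential ingredient; once it is in hand, the remaining steps are a routine combination of the polynomial-time nondeterministic reduction of Remark~\ref{rem:solution-set-GP} with the standard Presburger-in-\textbf{NP} argument.
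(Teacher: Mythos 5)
Your reduction via Lemma~\ref{lemma-preproc} and Remark~\ref{rem:solution-set-GP} is fine as far as it goes, but the argument rests on a quantitative claim about hyperbolic groups that is not available and in fact cannot hold: that for a knapsack expression $\E$ over a hyperbolic group $H$ one can compute in polynomial time a \emph{polynomial-size} semilinear representation of $\Sol_H(\E)$. What \cite{LOHREY2019} provides is only a polynomial bound on the \emph{magnitude} $\mathsf{K}_{H,\Sigma_H}(n,n)$; it says nothing about the number of linear components or periods, and this number is genuinely exponential in general. Already for $H=\Z$ (a hyperbolic vertex group) the expression $t^{x_1}t^{x_2}\cdots t^{x_k}t^{-K}$ has solution set $\{x\in\N^k \mid x_1+\cdots+x_k=K\}$; every period of a linear subset of this set must be the zero vector, so any semilinear representation consists of singletons and needs $\binom{K+k-1}{k-1}$ base vectors, which is exponential in the input length for $k\approx K\approx n/2$. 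Hence the existential Presburger formula you want to write down in the last step does not have polynomial size, and the ``main obstacle'' you flag is not a matter of isolating something implicit in the {\sf LogCFL} algorithm -- the statement you need is false.

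The gap is exactly at the verification step that your plan tries to avoid. Once the exponents are only known up to exponentially large values (so they must be guessed in binary), one has to \emph{check} that a guessed valuation satisfies the equations, and an explicit semilinear description of the hyperbolic solution sets is not a usable certificate. The paper's proof goes the other way: it uses the polynomial bound on $\mathsf{K}_{H,\Sigma'}$ from \cite{LOHREY2019} only through Theorem~\ref{thm-main-technical}, which for a fixed graph product (constant $|\Gamma|$ and $\alpha$) gives an exponential bound on $\mathsf{E}_{G,\Sigma}(n,n)$; an {\bf NP} machine then guesses the binary encodings of a solution within this bound and verifies it as an instance of the compressed word problem for $G$, which is solvable in polynomial time by combining \cite{HoltLS19} (hyperbolic groups) with \cite{HauLohHau13} (graph products). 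Some ingredient of this kind -- a polynomial-time power/compressed word problem to certify a binary-encoded solution, or some other succinct certificate of membership in $\Sol_{H_{i,j}}(\E_{i,j})$ -- is missing from your proposal and is needed to close it.
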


\begin{proof}
For a hyperbolic group $H$ (with an arbitrary generating set $\Sigma'$) it was shown in \cite{LOHREY2019} that the function $\mathsf{K}_{H,\Sigma'}(n) = \mathsf{K}_{H,\Sigma'}(n,n)$ 
is polynomially bounded. Theorem~\ref{thm-main-technical} yields an exponential bound for the function  $\mathsf{E}_{G,\Sigma}(n) = \mathsf{E}_{G,\Sigma}(n,n)$
(note that $|\Gamma|$ and $\alpha$ are constants since we consider a fixed graph product $G$).
A nondeterministic polynomial time Turing machine can therefore guess the binary encodings of numbers $\sigma(x) \leq \mathsf{K}_{G,\Sigma}(\norm{\E})$ for 
each variable $x$ of the input exponent expression $e$.
Checking whether $\sigma$ is a $G$-solution of $e$ is an instance of the compressed word problem for $G$.
By the main result of \cite{HoltLS19} the compressed word problem for a hyperbolic group can be solved in polynomial time and 
by \cite{HauLohHau13} the compressed word problem for a graph product of groups $G_i$ ($i \in \Gamma$) can be solved in polynomial 
time if for every $i \in \Gamma$ the compressed word problem for $G_i$ can be solved in polynomial time. Hence, we can check
in polynomial time if  $\sigma$ is a $G$-solution of $e$.
\end{proof}
Let us now consider the special case where the graph product is a free product of two groups $\dG_1$ and $\dG_2$.

\begin{theorem}\label{thm-cor-technical}
If the groups $\dG_1$ and $\dG_2$ are knapsack-semilinear, then $\dG_1*\dG_2$ is knapsack-semilinear as well.
Let $\mathsf{K}(n,m)$ be the pointwise maximum of the functions
$\mathsf{K}_{\dG_1,\Sigma_1}$ and $\mathsf{K}_{\dG_2,\Sigma_2}$.
 Then for $\dG = \dG_1 * \dG_2$ we have $\mathsf{K}_{\dG,\Sigma}(n,m) \le \max\{\mathsf{K}_1, \mathsf{K}_2\}$ with 
\begin{equation*}
\mathsf{K}_1 = \mathsf{K}(6 m n,m)  \text{ and } 
\mathsf{K}_2 \le \mathcal{O}(m n^4) .
\end{equation*}
\end{theorem}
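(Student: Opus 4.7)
The plan is to specialize the proof of Theorem~\ref{thm-main-technical} to the free-product case $\dG = \dG_1 * \dG_2$, where $E = I = \emptyset$, so $\alpha = 1$ and $|\Gamma| = 2$. Three ingredients from the paper become sharper in this setting and must be used in place of the general ones: Remark~\ref{rem-preproc-free-product} in place of Lemma~\ref{lemma-preproc}, Lemma~\ref{lemma-reduction-free-product} in place of Lemma~\ref{lemma-reduction}, and Remark~\ref{rem-2dim-words} in place of Lemma~\ref{lemma-connected-star}. Crucially, the first of these preserves the knapsack property, so no intersection with an auxiliary semilinear set is ever required --- the variables stay partitioned throughout, and the exponential blowup coming from Proposition~\ref{intersection-semilinear-sets} is avoided.

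First I would apply Remark~\ref{rem-preproc-free-product} to the input knapsack $\E$ of length $n$ and degree $m$, obtaining a knapsack $\E'$ with $\norm{\E'} \le 3n$, $\deg(\E') \le m$, every period atomic or well-behaved, and $\Sol_\dG(\E) = \Sol_\dG(\E')$. Then I apply Lemma~\ref{lemma-reduction-free-product} to the $2m$-term product $u_1^{x_1} v_1 \cdots u_m^{x_m} v_m$, obtaining a $1$-reducible refinement of length at most $14m$ with at most $2m - 2$ atom creations. Collecting the atom creations as in Step~3 of the proof of Theorem~\ref{thm-main-technical} gives local knapsack instances $\E_j$ over $\dG_1$ or $\dG_2$ of length $\norm{\E_j} \le 2m \cdot 3n = 6mn$ and degree $\deg(\E_j) \le m$; hence by definition of $\mathsf{K}$ the direct-sum solution set $S_a$ has $\magn(S_a) \le \mathsf{K}(6mn, m) = \mathsf{K}_1$.

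Next I would handle the cancellation constraints. Because we are in $A^*$ rather than a genuine trace monoid, each factorization $u_i^{x_i} = \tilde y_{i,1} \cdots \tilde y_{i,k_i}$ is a factorization of a word, so each $\tilde y_{i,j}$ is a contiguous substring of $u_i^{\mathbb N}$ and decomposes uniquely as $\tilde y_{i,j} = p_{i,j} u_i^{x_{i,j}} s_{i,j}$ with $|p_{i,j}|, |s_{i,j}| < |u_i| \le 3n$. The resulting two-dimensional equations $p_{i,j} u_i^{x_{i,j}} s_{i,j} = s_{k,l}^{-1} (u_k^{-1})^{x_{k,l}} p_{k,l}^{-1}$ therefore involve only words of length at most $3n$, so Remark~\ref{rem-2dim-words} with $\mu \le 3n$ yields linear sets for $(x_{i,j}, x_{k,l})$ whose coefficients $a, b$ are bounded by $\mathcal{O}(n^4)$. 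Substituting back into $x_i = c'_i + \sum b_{i,j,k,l} z_{i,j,k,l}$, the constant term sums $\mathcal{O}(n^4)$ over the at most $k_i \le 14m$ cancellations touching $x_i$, yielding $\mathcal{O}(m n^4) = \mathsf{K}_2$.

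Finally, since $\E$ is a knapsack expression, the atomic variables $X_a$ and the non-atomic variables split among the two-dimensional constraints live on pairwise disjoint coordinate sets, so $\Sol_\dG(\E)$ is a finite union of $\oplus$-products of $S_a$ with the 2-dim linear sets; the inequality $\magn(K \oplus L) \le \max\{\magn(K), \magn(L)\}$ then gives the desired bound $\max\{\mathsf{K}_1, \mathsf{K}_2\}$. The main obstacle is Step~B: one must carefully check that in the free case the general Lemma~\ref{lemma-simplify-factorization-power} collapses to the ``take a proper suffix and a proper prefix of $u$'' picture, so that $|p_{i,j}|, |s_{i,j}| \le |u_i|$ rather than the general $|\Gamma| \cdot (k_i - 1) \cdot |u_i|$ --- otherwise $\mathsf{K}_2$ would grow like $m^4 n^4$ instead of $m n^4$.
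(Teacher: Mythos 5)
Your proposal is correct and follows essentially the same route as the paper's own proof: specialize the six-step argument of Theorem~\ref{thm-main-technical} to the free case, replacing Lemma~\ref{lemma-preproc} by Remark~\ref{rem-preproc-free-product} (so no intersection step and no application of Proposition~\ref{intersection-semilinear-sets}), Lemma~\ref{lemma-reduction} by Lemma~\ref{lemma-reduction-free-product}, and Lemma~\ref{lemma-connected-star} by Remark~\ref{rem-2dim-words}, which yields exactly the bounds $\mathsf{K}(6mn,m)$ and $\mathcal{O}(mn^4)$. The point you flag at the end --- that in $A^*$ the factors of $u_i^{x_i}$ are of the form (suffix of $u_i$)$\cdot u_i^{x_{i,j}}\cdot$(prefix of $u_i$), avoiding the $|\Gamma|\cdot(k_i-1)\cdot|u_i|$ blowup of Lemma~\ref{lemma-simplify-factorization-power} --- is precisely how the paper's Step~4 proceeds.
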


\begin{proof}
The proof is similar to the one from Theorem~\ref{thm-main-technical}. 
We first consider the case where every period $u_i$ is either an atom
or a well-behaved word (see Remark~\ref{rem-preproc-free-product}).

Let us go throw the six steps from the proof of Theorem~\ref{thm-main-technical}:

\medskip
\noindent
{\em Step 1.} This step is carried out in the same way as in 
the proof of Theorem~\ref{thm-main-technical}. 

\medskip
\noindent
{\em Step 2.} Here we can use Lemma~\ref{lemma-reduction-free-product} instead of Lemma~\ref{lemma-reduction},
which yields the upper bound of $14m$ on the number of factors in our refinement of 
$u_1^{x_1} v_1 u_2^{x_2} v_2 \cdots u_m^{x_m} v_m$ (where powers $u_i^{x_i}$ with $i \in N_1$ have been replaced
by single atoms). The number of atom creations (of any type) is at most $2m-2$.  We do not have to guess the atom
sets $A_{i,j} \subseteq \alp(u_{i,j})$ since there are no swapping steps in 
Lemma~\ref{lemma-reduction-free-product}.

\medskip
\noindent
{\em Step 3.} 
This step is copied from the  proof of Theorem~\ref{thm-main-technical}. 
We obtain for the variables $x_i$ with $i \in N_a$ the semilinear constraint
$(x_i)_{i \in N_a} \in S_a$ where $S_a$ is of magnitude at most
$\mathsf{K}(2m \norm{\E},m)$.

\medskip
\noindent
{\em Step 4.} Also this step is analogous to the proof of Theorem~\ref{thm-main-technical}. Recall that
we have the better bound $14 m$ on the number of factors in our refinement of 
$u_1^{x_1} v_1 u_2^{x_2} v_2 \cdots u_m^{x_m} v_m$.  Eliminating the equations $u_i^{x_i} = \tilde{y}_{i,1} \cdots \tilde{y}_{i,k_i}$  
($i \in N_{\overline{a}}$), which are interpreted in $A^*$, is much easier due to the absence of commutation.
For every $i \in N_{\overline{a}}$ we obtain a disjunction of statements of the following form:
there exist integers $x_{i,j} \geq 0$ ($1 \leq j \leq k_i$) such that
 \begin{itemize}
 \item $x_i = c_i +  \sum_{j=1}^{k_i} x_{i,j}$,
 \item $\tilde{y}_{i,j} = p_{i,j}  u_i^{x_{i,j}} s_{i,j}$ for all $1 \le j \le k_i$.
 \end{itemize}
 Here, every $p_{i,j}$ is a suffix of $u_i$, every $s_{i,j}$ is a prefix of $u_i$
 and $c_i \leq k_i \leq 14 m$. Basically, $c_i$ is the number of factors $u_i$ that
 are split non-trivially in the factorization $u_i^{x_i} = \tilde{y}_{i,1} \cdots \tilde{y}_{i,k_i}$.
We can then carry out the same simplifications that  we did in 
the proof of Theorem~\ref{thm-main-technical}. If $\tilde{y}_{i,j}$ is an explicit word $v_{k,\ell}^{-1}$ then
we determine the unique solution $x_{i,j}$ (if it exists) of $\tilde{y}_{i,j} = p_{i,j}  u_i^{x_{i,j}} s_{i,j}$ and
replace $x_{i,j}$ by that number, which is at most $\lambda(\E)$.
We arrive at a statement of the following form:
there exist integers $x_{i,j} \geq 0$ ($i \in N_{\overline{a}}$, $j \in K_i$) such that 
the following hold:
\begin{enumerate}[(a)]
\item $x_i = c'_i +  \sum_{j \in K_i} x_{i,j}$ for $i \in N_{\overline{a}}$,
\item $p_{i,j} u_i^{x_{i,j}} s_{i,j} = s_{k,\ell}^{-1} (u_k^{-1})^{x_{k,\ell}} p_{k,\ell}^{-1}$ in $A^*$ for all $(i,j,k,\ell) \in M$,
\item $(x_i)_{i \in N} \in S_a$.
\end{enumerate}
Here, $K_i \subseteq [1,k_i]$ is a set of size at most $k_i \leq 14m$, 
$M$ is a matching relation (with $(i,j,k,\ell) \in M$ if and only if $(k,\ell,i,j) \in M$),
and $c'_i \le 14m + k_i \cdot \lambda(\E) \leq \mathcal{O}(m \cdot \lambda(\E))$.
The words $p_{i,j}$ and $s_{i,j}$ have length at most $\lambda(\E)$.

\medskip
\noindent
{\em Step 5.} The remaining two-dimensional knapsack equations from point (b)
are eliminated with 
Remark~\ref{rem-2dim-words}. Every equation 
$$p_{i,j}  u_i^{x_{i,j}} s_{i,j}  = s_{k,\ell}^{-1} (u^{-1}_k)^{x_{k,\ell}} p_{k,\ell}^{-1}$$
can be nondeterministically replaced by a semilinear constraint
$$
(x_{i,j}, x_{k,\ell}) \in \{ (a_{i,j,k,\ell} + b_{i,j,k,\ell} \cdot z, a_{k,\ell,i,j} + b_{k,\ell,i,j} \cdot z) \mid z \in \mathbb{N} \}.
$$
where the numbers $a_{i,j,k,\ell}, b_{i,j,k,\ell}, a_{k,\ell,i,j}, b_{k,\ell,i,j}$ are bounded by 
$\mathcal{O}(\lambda(\E)^4)$.

At this point, we have obtained a statement of the following form:
there exist $z_{i,j,k,\ell} \in \N$ (for $(i,j,k,\ell) \in M$) with $z_{i,j,k,\ell} = z_{k,\ell,i,j}$ and such that
\begin{enumerate}[(a)]
\item $x_i = c'_i+ \sum_{(i,j,k,\ell) \in M} (a_{i,j,k,\ell} + b_{i,j,k,\ell} \cdot z_{i,j,k,\ell})$ for $i \in N_{\overline{a}}$, and
\item $(x_i)_{i \in N_a} \in S_a$.
\end{enumerate}
The sum in (a)  contains $|K_i| \leq 14m$ many summands.
Hence, (a) can be written as $x_i = c''_i+ \sum_{(i,j,k,\ell) \in M'} b_{i,j,k,\ell} \cdot z_{i,j,k,\ell}$ with
\begin{eqnarray*}
c''_i & = & c'_i + \sum_{(i,j,k,\ell) \in M} a_{i,j,k,\ell} \\
& \le &  \mathcal{O}(m \cdot \lambda(\E)) + 14 m \cdot  \mathcal{O}(\lambda(\E)^4) \\
&  = & \mathcal{O}(m \cdot \lambda(\E)^4) .
\end{eqnarray*}
We therefore obtained a semilinear representation for $\Sol_{\dG}(\E)$ whose magnitude is bounded by
$\max\{ \mathsf{K}_1, \mathsf{K}_2 \}$, where 
\begin{equation*}
\mathsf{K}_1 = \mathsf{K}(2m \norm{\E},m) \text{ and } 
\mathsf{K}_2 \le \mathcal{O}(m \norm{\E}^4) .
\end{equation*}
\noindent
{\em Step 6.} For the preprocessing we apply Remark~\ref{rem-preproc-free-product}. Hence, we 
just have to replace $\norm{\E}$ by $3 \norm{\E}$ in the above bounds, which yields
the statement of the theorem.
\end{proof}

\begin{bem}
By Theorem~\ref{thm-cor-technical}, $\mathsf{K}_{\dG,\Sigma}$ is polynomially bounded
if $\mathsf{K}_{\dG_1,\Sigma}$ and $\mathsf{K}_{\dG_2,\Sigma}$ are polynomially bounded.
This was also shown in \cite{LohreyZ18}.
\end{bem}

\begin{bem}
Analogously to Remark~\ref{rem:solution-set-GP}, the above proof 
shows that the set of solutions $\Sol_{\dG}(\E)$ for $G = G_1*G_2$ can be written as a finite union 
$$
\Sol_{\dG}(\E) = \bigcup_{i=1}^p \bigoplus_{j=1}^{q_i} \Sol_{H_{i,j}}(\E_{i,j}) \oplus L_i
$$
such that the following hold for every $1 \leq i \leq p$:
\begin{itemize}
\item every $H_{i,j}$ is either $G_1$ or $G_2$ and $\E_{i,j}$ is a knapsack expression over the group $H_{i,j}$.
The variable sets $X_{\E_{i,j}}$ ($1 \leq j \leq q_i$) form a partition of the set $X_a$ (the variables corresponding to atomic periods).
\item Every $\E_{i,j}$ is a knapsack expression of size at most $6 m \norm{\E}$ and degree at most $m$.
\item   The set $L_i$ is semilinear of magnitude $\mathcal{O}(m n^4)$.
\end{itemize}
Here, the indices $i \in [1,p]$ correspond to the guessed data in the above prove.
Moreover, given $i \in [1,p]$, one can compute the knapsack expressions $\E_{i,j}$ ($1 \leq j \leq q_i$)
and a semilinear representation of $L_i$ in polynomial time.
\end{bem}
The above remark immediately yields the following complexity transfer result. A language $A$ is 
nondeterministically polynomial time reducible to a language $B$ if there exists a nondeterministic polynomial time 
Turing-machine $M$ that outputs on each computation path after termination a word over the alphabet of the language $B$ and such that
$x \in A$ if and only if on input $x$, the machine $M$ has at least one computation path on which it outputs a word from $B$.

\begin{theorem} \label{thm:free-product-NP-reduction}
The knapsack problem for $G_1*G_2$ is nondeterministically polynomial time reducible to the knapsack problems for $G_1$ and $G_2$.
\end{theorem}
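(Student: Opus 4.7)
The plan is to invoke the structural remark immediately preceding the theorem. On input a knapsack expression $\E$ over $\dG = \dG_1 * \dG_2$, first apply the preprocessing from Remark~\ref{rem-preproc-free-product} to obtain, in polynomial time and with $\norm{\E'} \le 3\norm{\E}$, $\deg(\E') \le \deg(\E)$, and $\Sol_\dG(\E) = \Sol_\dG(\E')$, a knapsack expression $\E'$ every period of which is either atomic or well-behaved. This puts us in the regime where the remark is applicable, and gives the decomposition
$$
\Sol_{\dG}(\E') \;=\; \bigcup_{i=1}^{p} \Bigl(\bigoplus_{j=1}^{q_i} \Sol_{H_{i,j}}(\E_{i,j})\Bigr) \oplus L_i,
$$
in which every $H_{i,j}\in\{\dG_1,\dG_2\}$, every $\E_{i,j}$ is a knapsack expression over $H_{i,j}$ of polynomial size, each $L_i$ is a polynomially presented semilinear set, and, once the index $i$ is fixed, the data $(\E_{i,j})_{j}$ and $L_i$ can be produced in deterministic polynomial time.

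The nondeterministic reduction then proceeds as follows. Re-running the guessing sequence from the proof of Theorem~\ref{thm-cor-technical} (the choice of $N_1$, the refined factorization and reduction supplied by Lemma~\ref{lemma-reduction-free-product}, the factorizations chosen in step 4, and the semilinear constraints selected for each two-dimensional equation in step 5) enumerates the branches $i \in [1,p]$; every such branch is reached by polynomially many nondeterministic bits. Along a branch, the machine computes the associated $\E_{i,j}$ and $L_i$, tests $L_i \neq \emptyset$ in polynomial time (trivial once an explicit semilinear representation is available), and on success outputs the tuple $\bigl(H_{i,j}, \E_{i,j}\bigr)_{1 \le j \le q_i}$ as a conjunctive query into the combined knapsack problem for $\dG_1$ and $\dG_2$.

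Correctness is immediate from the decomposition: $\Sol_\dG(\E) \neq \emptyset$ iff there is some $i$ with $L_i \neq \emptyset$ and with $\Sol_{H_{i,j}}(\E_{i,j}) \neq \emptyset$ for every $j$. The only delicate point is reconciling a polynomial-sized conjunction of queries with the reduction notion stated just before the theorem: the natural reading is to take the target language $B$ to consist of all tuples $(\F_1,\dots,\F_r)$ of knapsack instances over $\dG_1$ or $\dG_2$ in which every coordinate is solvable in the relevant group, so that $B$-membership reduces coordinate-by-coordinate to the individual knapsack problems for $\dG_1$ and $\dG_2$. The main obstacle is thus mostly bookkeeping—verifying that the number of nondeterministic branches, the sizes of the sub-instances $\E_{i,j}$, and the descriptions of the sets $L_i$ are all polynomial in $\norm{\E}$—and all of these bounds are already provided by the quantitative statements in the proof of Theorem~\ref{thm-cor-technical} and the cited remark, so no new estimates are required.
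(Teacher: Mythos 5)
Your proposal is correct and follows essentially the same route as the paper: the paper proves this theorem by direct appeal to the remark preceding it (the decomposition $\Sol_{\dG}(\E) = \bigcup_i \bigoplus_j \Sol_{H_{i,j}}(\E_{i,j}) \oplus L_i$ with polynomial-time computable branches), which is exactly the argument you spell out, including the reading of the target language as tuples of simultaneously solvable instances. The only redundancy is that you apply the preprocessing of Remark~\ref{rem-preproc-free-product} separately, although the decomposition remark already accounts for it; this is harmless.
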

The consequence of Theorem~\ref{thm:free-product-NP-reduction} that solvability
of the knapsack problem in $\mathsf{NP}$ is passed on from $G_i$ for $i=1,2$ to
the free product $G_1*G_2$ was shown using different methods in the extended
abstract~\cite{LohreyZetzsche2016a} (see also the comment after
Theorem~\ref{thm:amalgamated-product-NP-reduction}).

\section{Part 2: Knapsack in HNN-extensions and amalgamated products} \label{sec-HNN+amalgamated}

In this section we deal with two constructions that are of
fundamental importance in combinatorial group theory~\cite{LySch77}, namely
HNN-extensions and amalgamated products. In their general form, HNN-extensions
have been used to construct groups with an undecidable word problem, which
means they may destroy desirable algorithmic properties. We consider the
special case of finite associated (resp. identified) subgroups, for which these
constructions already play a prominent role, for example, in Stallings'
decomposition of groups with infinitely many ends~\cite{Stal71} or the
construction of virtually free groups~\cite{DiDu89}.  Moreover, these
constructions are known to preserve a wide range of important structural and
algorithmic properties~\cite{AllGre73,Bez98,HauLo11,KaSiSt06,KaWeMy05,KaSo70,KaSo71,LohSen06icalp,LohSen08,MeRa04}.

\subsection{HNN-extensions preserve knapsack semilinearity}

Suppose $G=\langle \Sigma\mid R\rangle$ is a finitely generated group 
with the  finite symmetric generating set $\Sigma = \Omega \cup \Omega^{-1}$ and the set of relators  $R \subseteq \Sigma^*$.
Fix two isomorphic subgroups $A$ and $B$ of $G$ together with an isomorphism $\varphi\colon A\to
B$. Let $t \notin \Sigma$ be a fresh generator. Then the corresponding \emph{HNN-extension} is the group 
$$H=\langle \Omega\cup \{t\} \mid R\cup \{t^{-1}a^{-1}t\varphi(a) \mid a\in A\}\rangle$$ (formally, we identify here every element $c \in A \cup B$
with a word over $\Sigma$ that evaluates to $c$). This group is usually denoted by 
\begin{equation} \label{eq-HNN}
H=\langle G, t \mid t^{-1}at=\varphi(a)~(a\in A)\rangle .
\end{equation}
Intuitively, $H$ is obtained from $G$ by adding a new element $t$ such that
conjugating elements of $A$ with $t$ applies the isomorphism $\varphi$.  Here,
$t$ is called the \emph{stable letter} and the groups $A$ and $B$ are the
\emph{associated subgroups}.   A basic fact about HNN-extensions is that
the group $G$ embeds naturally into $H$~\cite{HiNeNe49}.

Here, we only consider the case that $A$ and $B$ are finite groups, so that we
may assume that $(A \cup B) \setminus \{1\}$ is contained in the finite generating set $\Sigma$.
To exploit the symmetry of the
situation, we use the notation $A(+1)=A$ and $A(-1)=B$. Then, we have
$\varphi^{\alpha}\colon A(\alpha)\to A(-\alpha)$ for $\alpha\in\{+1,-1\}$.
We will make use of the (possibly infinite) alphabet $\Gamma=G\backslash \{1\}$.
By $h\colon (\Gamma \cup\{t,t^{-1}\})^*\to H$, we denote the canonical
morphism that maps each word to the element of $H$ it represents.

A word $u\in (\Gamma \cup \{t,t^{-1}\})^*$ is called \emph{Britton-reduced} if
it does not contain a factor of the form $cd$ with $c,d\in \Gamma$
or a factor $t^{-\alpha} a t^\alpha$ with $\alpha\in\{-1,1\}$ and
$a\in A(\alpha)$. A factor of the form $t^{-\alpha} a t^\alpha$ with $\alpha\in\{-1,1\}$ and
$a\in A(\alpha)$ is also called a {\em pin}. 
Note that the equation
$t^{-\alpha} a t^\alpha=\varphi^\alpha(a)$ allows us to replace a pin
$t^{-\alpha}at^{\alpha}$ by $\varphi^{\alpha}(a)\in A(-\alpha)$.  Since this
decreases the number of $t$'s in the word, we can reduce every word to
an equivalent Britton-reduced word. We denote the set of all Britton-reduced words in the HNN-extension \eqref{eq-HNN} by $\BR(H)$.

In this section, let $\gamma$ be the cardinality of $A$.
A word $w\in \BR(H) \setminus \Gamma$ is called {\em well-behaved}, if $w^m$ is Britton-reduced for every $m \geq 0$.
Note that $w$ is well-behaved if and only if $w$ and $w^2$ are Britton-reduced.
Elements of $\Gamma$ are also called {\em atomic}.

The \emph{length} of a word $w \in (\Gamma \cup \{t,t^{-1}\})^*$ is defined as usual and denoted by $|w|$. 
For a word $w = a_1 a_2 \cdots a_k$ with $a_i \in \Gamma \cup \{t,t^{-1}\}$ we define the 
the \emph{representation length} of $w$ as $\norm{w} = \sum_{i=1}^k n_i$, where $n_i = 1$ if $a_i \in \{t,t^{-1}\}$ 
and $n_i$ is the geodesic length of $a_i$ in the group $G$ if $a_i \in \Gamma$.
Note that $\norm{a} = 1$ for $a \in (A \cup B) \setminus \{1\}$.

The following lemma provides a necessary and sufficient condition for equality of Britton-reduced words in an HNN-extension (cf. Lemma~2.2 of \cite{HauLo11}):

\begin{lemma}\label{boat}
Let $u=g_0 t^{\delta_1}g_1 \cdots t^{\delta_k}g_k$ and
$v=h_0 t^{\varepsilon_1}h_1 \cdots t^{\varepsilon_\ell}h_\ell$ be Britton-reduced words with $g_0,\dots , g_k, h_0, \dots , h_\ell \in G$ and $\delta_1,\dots \delta_k,\varepsilon_1, \dots , \varepsilon_\ell\in \{ 1, -1 \}$. Then $u=v$ in the HNN-extension $H$ of $G$ if and only if the following hold:
\begin{itemize}
\item $k=\ell$ and $\delta_i=\varepsilon_i$ for $1\leq i \leq k$
\item there exist $c_1, \dots, c_{2m} \in A\cup B$ such that:
\begin{itemize}
\item $g_ic_{2i+1}=c_{2i}h_i$ in $G$ for $0\leq i \leq k$ (here we set $c_0=c_{2k+1}=1$)
\item $c_{2i-1}\in A(\delta_i)$ and $c_{2i}=\varphi^{\delta_i}(c_{2i-1})\in A(-\delta_i)$ for $1\leq i \leq k$.
\end{itemize}
\end{itemize}
\end{lemma}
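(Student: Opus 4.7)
The plan is to prove the two implications separately. For the sufficiency direction, the hypothesis $c_{2i} = \varphi^{\delta_i}(c_{2i-1})$ combined with the defining HNN relation yields the identity $c_{2i-1} t^{\delta_i} = t^{\delta_i} c_{2i}$ in $H$, and the equations $g_i c_{2i+1} = c_{2i} h_i$ together with the convention $c_0 = c_{2k+1} = 1$ give $h_i = c_{2i}^{-1} g_i c_{2i+1}$. Substituting into $v = h_0 t^{\delta_1} h_1 \cdots t^{\delta_k} h_k$ and pushing each $c_{2i-1}$ past its $t^{\delta_i}$ produces a telescoping product in which every adjacent pair $c_{2i} c_{2i}^{-1}$ cancels, leaving exactly $u$.

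For the necessity direction, I would proceed by induction on $k+l$. The base case $k = l = 0$ reduces to the assertion that $g_0 = h_0$ in $G$, which follows from the classical embedding $G \hookrightarrow H$ established in~\cite{HiNeNe49}. For the inductive step, consider the word $w := u v^{-1}$, which represents the identity in $H$ and contains $k + l \geq 1$ occurrences of $t^{\pm 1}$. By Britton's Lemma, $w$ must contain a pin; since $u$ and $v^{-1}$ are each themselves Britton-reduced, no pin can lie entirely inside either factor, so the pin must straddle the junction and involve the factor $t^{\delta_k} (g_k h_l^{-1}) t^{-\varepsilon_l}$. This forces $k, l \geq 1$, $\delta_k = \varepsilon_l$ (call this common value $\delta$), and $c_{2k} := g_k h_l^{-1} \in A(-\delta)$; setting $c_{2k-1} := \varphi^{-\delta}(c_{2k}) \in A(\delta)$ gives $c_{2k} = \varphi^{\delta}(c_{2k-1})$, and the pin reduces to $c_{2k-1}$. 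What remains is the equality $u' = v'$ in $H$, where $u' = g_0 t^{\delta_1} g_1 \cdots t^{\delta_{k-1}} (g_{k-1} c_{2k-1})$ and $v' = h_0 t^{\varepsilon_1} h_1 \cdots t^{\varepsilon_{l-1}} h_{l-1}$, both words containing one fewer $t^{\pm 1}$ than their originals. The induction hypothesis applied to $u' = v'$ produces $c_1, \ldots, c_{2(k-1)}$ satisfying the claim for $u'$ and $v'$; at position $i = k-1$ this inductive equation reads $g_{k-1} c_{2k-1} = c_{2k-2} h_{k-1}$, while $g_k = c_{2k} h_k$ at position $i = k$ is immediate from the definition of $c_{2k}$, so all $k+1$ equations of the lemma hold simultaneously.

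The main obstacle is Britton's Lemma itself, which I would invoke as a standard black box from the monograph~\cite{LySch77}. Two smaller verifications remain: first, that $u'$ stays Britton-reduced after the modification of its rightmost $G$-letter from $g_{k-1}$ to $g_{k-1} c_{2k-1}$, which is immediate since no $t^{\pm 1}$ lies to the right of that letter and so no new pin can arise; and second, that the degenerate cases where exactly one of $k, l$ equals zero are impossible — indeed, in such a case $w = u v^{-1}$ would still contain $t^{\pm 1}$ but the concatenation of two Britton-reduced factors joined only through a single $G$-element (with no $t^{\pm 1}$ adjacent on one side) admits no pin, contradicting Britton's Lemma.
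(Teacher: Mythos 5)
Your proof is correct. Note that the paper does not prove this lemma at all: it is quoted as Lemma~2.2 of the cited work of Haubold and Lohrey on HNN-extensions, so there is no in-paper argument to compare against. Your two directions are the standard ones and both check out: in the sufficiency direction the relations $c_{2i-1}t^{\delta_i}=t^{\delta_i}c_{2i}$ (valid since $c_{2i}=\varphi^{\delta_i}(c_{2i-1})$ and $c_{2i-1}\in A(\delta_i)$) do make the substituted product telescope to $u$; in the necessity direction the induction on $k+l$ via Britton's Lemma works, the only candidate pin in $uv^{-1}$ is indeed $t^{\delta_k}(g_kh_l^{-1})t^{-\varepsilon_l}$, your bookkeeping of $c_{2k}\in A(-\delta_k)$, $c_{2k-1}=\varphi^{-\delta_k}(c_{2k})$ matches the statement, and the equations at positions $i=k-1$ and $i=k$ come out exactly as you say. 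Two very minor points you leave implicit: you should remark that $v^{-1}$ is Britton-reduced whenever $v$ is (a pin $t^{-\alpha}h_i^{-1}t^{\alpha}$ in $v^{-1}$ corresponds to a pin $t^{-\alpha}h_it^{\alpha}$ in $v$, since $A(\alpha)$ is a subgroup), and that $v'$, being a prefix of $v$, is Britton-reduced so the induction hypothesis applies to both $u'$ and $v'$; neither observation is more than a line, so these are presentational rather than genuine gaps.
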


The second condition of the lemma can be visualized by the diagram from Figure~\ref{fig-van-kampen} (also called a van Kampen diagram, see \cite{LySch77} for more details),
where $k=\ell=4$. Light-shaded (resp. dark-shaded) faces represent relations in $G$
(resp. relations of the form
$ct^\delta=t^\delta \varphi^\delta (c)$ with $c \in A(\delta)$).
The elements $c_1, \dots , c_{2k}$ in such a diagram are also called \emph{connecting elements}.

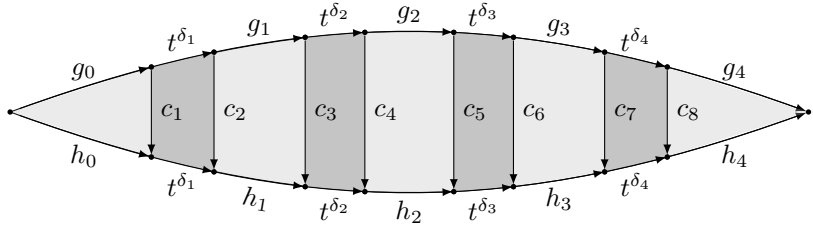
\begin{figure}[t]
\begin{tikzpicture}[x=0.75cm,y=0.75cm]
 \node (A) at (0,0) {};
 \node (R) at (14,0) {};
 
 \tikzset{>=latex}
\draw[->,shorten >= -3pt, shorten <= -3pt, nodes={circle,fill=black,inner sep=0pt,minimum size=2pt}] (A) to[bend right=20]
  node[pos=0.16] (C) {}
  node[pos=0.24] (E) {}
  node[pos=0.36] (G) {}
  node[pos=0.44] (I) {}
  node[pos=0.56] (K) {}
  node[pos=0.64] (M) {}
  node[pos=0.76] (O) {}
  node[pos=0.84] (Q) {}
  (R);
  
\draw[->,shorten >= -3pt, shorten <= -3pt, nodes={circle,fill=black,inner sep=0pt,minimum size=2pt}] (A) to[bend left=20]
  node[pos=0.16] (B) {}
  node[pos=0.24] (D) {}
  node[pos=0.36] (F) {}
  node[pos=0.44] (H) {}
  node[pos=0.56] (J) {}
  node[pos=0.64] (L) {}
  node[pos=0.76] (N) {}
  node[pos=0.84] (P) {}
  (R);
  
 \path[fill=gray!50,opacity=.3] (A.center) to (B.center) to (C.center) to (A.center);
 \path[fill=gray!50,opacity=.9] (B.center) to (C.center) to (E.center) to (D.center) to (B.center);
 \path[fill=gray!50,opacity=.3] (E.center) to (D.center) to (F.center) to (G.center) to (E.center);
 \path[fill=gray!50,opacity=.9] (F.center) to (G.center) to (I.center) to (H.center) to (F.center);
 \path[fill=gray!50,opacity=.3] (I.center) to (H.center) to (J.center) to (K.center) to (I.center);
 \path[fill=gray!50,opacity=.9] (J.center) to (K.center) to (M.center) to (L.center) to (J.center);
 \path[fill=gray!50,opacity=.3] (M.center) to (L.center) to (N.center) to (O.center) to (M.center);
 \path[fill=gray!50,opacity=.9] (N.center) to (O.center) to (Q.center) to (P.center) to (N.center);
 \path[fill=gray!50,opacity=.3] (Q.center) to (P.center) to (R.center) to (Q.center);
	 
\begin{scope}[decoration={
    markings,
    mark=at position 0.171 with {\arrow{>}}}
    ] 
    \path[postaction={decorate}] (A) to[bend left=20] (R);
    \path[postaction={decorate}] (A) to[bend right=20] (R);
\end{scope}

\begin{scope}[decoration={
    markings,
    mark=at position 0.253 with {\arrow{>}}}
    ] 
    \path[postaction={decorate}] (A) to[bend left=20] (R);
    \path[postaction={decorate}] (A) to[bend right=20] (R);
\end{scope}

\begin{scope}[decoration={
    markings,
    mark=at position 0.368 with {\arrow{>}}}
    ] 
    \path[postaction={decorate}] (A) to[bend left=20] (R);
    \path[postaction={decorate}] (A) to[bend right=20] (R);
\end{scope}

\begin{scope}[decoration={
    markings,
    mark=at position 0.443 with {\arrow{>}}}
    ] 
    \path[postaction={decorate}] (A) to[bend left=20] (R);
    \path[postaction={decorate}] (A) to[bend right=20] (R);
\end{scope}

\begin{scope}[decoration={
    markings,
    mark=at position 0.555 with {\arrow{>}}}
    ] 
    \path[postaction={decorate}] (A) to[bend left=20] (R);
    \path[postaction={decorate}] (A) to[bend right=20] (R);
\end{scope}

\begin{scope}[decoration={
    markings,
    mark=at position 0.63 with {\arrow{>}}}
    ] 
    \path[postaction={decorate}] (A) to[bend left=20] (R);
    \path[postaction={decorate}] (A) to[bend right=20] (R);
\end{scope}

\begin{scope}[decoration={
    markings,
    mark=at position 0.746 with {\arrow{>}}}
    ] 
    \path[postaction={decorate}] (A) to[bend left=20] (R);
    \path[postaction={decorate}] (A) to[bend right=20] (R);
\end{scope}

\begin{scope}[decoration={
    markings,
    mark=at position 0.826 with {\arrow{>}}}
    ] 
    \draw[postaction={decorate}] (A) to[bend left=20] (R);
    \draw[postaction={decorate}] (A) to[bend right=20] (R);
\end{scope}

 \path[line width=0pt] (A) -- (B) node[midway,above] {$g_0$};
 \path[line width=0pt] (A) -- (C) node[midway,below] {$h_0$};
 
 \path[line width=0pt] (B) -- (D) node[midway,above] {$t^{\delta_1}$};
 \path[line width=0pt] (C) -- (E) node[midway,below] {$t^{\delta_1}$};
 \draw[->, shorten >= -0.5pt] (B) -- (C) node[midway,right] {$c_1$};
 
 \path[line width=0pt] (D) -- (F) node[midway,above] {$g_1$};
 \path[line width=0pt] (E) -- (G) node[midway,sloped,below] {$h_1$};
 \draw[->, shorten >= -0.5pt] (D) -- (E) node[midway,right] {$c_2$};
 
 \path[line width=0pt] (F) -- (H) node[midway,above] {$t^{\delta_2}$};
 \path[line width=0pt] (G) -- (I) node[midway,below] {$t^{\delta_2}$};
 \draw[->, shorten >= -0.5pt] (F) -- (G) node[midway,right] {$c_3$};
 
 \path[line width=0pt] (H) -- (J) node[midway,above] {$g_2$};
 \path[line width=0pt] (I) -- (K) node[midway,below] {$h_2$};
 \draw[->, shorten >= -0.5pt] (H) -- (I) node[midway,right] {$c_4$};
 
 \path[line width=0pt] (J) -- (L) node[midway,above] {$t^{\delta_3}$};
 \path[line width=0pt] (K) -- (M) node[midway,below] {$t^{\delta_3}$};
 \draw[->, shorten >= -0.5pt] (J) -- (K) node[midway,right] {$c_5$};
 
 \path[line width=0pt] (L) -- (N) node[midway,above] {$g_3$};
 \path[line width=0pt] (M) -- (O) node[midway,below] {$h_3$};
 \draw[->, shorten >= -0.5pt] (L) -- (M) node[midway,right] {$c_6$};
 
 \path[line width=0pt] (N) -- (P) node[midway,above] {$t^{\delta_4}$};
 \path[line width=0pt] (O) -- (Q) node[midway,below] {$t^{\delta_4}$};
 \draw[->, shorten >= -0.5pt] (N) -- (O) node[midway,right] {$c_7$};
 
 \path[line width=0pt] (P) -- (R) node[midway,above] {$g_4$};
 \path[line width=0pt] (Q) -- (R) node[midway,below] {$h_4$};
 \draw[->, shorten >= -0.5pt] (P) -- (Q) node[midway,right] {$c_8$};
 
 \fill (A) circle (1pt);
 \fill (B) circle (1pt);
 \fill (C) circle (1pt);
 \fill (D) circle (1pt);
 \fill (E) circle (1pt);
 \fill (F) circle (1pt);
 \fill (G) circle (1pt);
 \fill (H) circle (1pt);
 \fill (I) circle (1pt);
 \fill (J) circle (1pt);
 \fill (K) circle (1pt);
 \fill (L) circle (1pt);
 \fill (M) circle (1pt);
 \fill (N) circle (1pt);
 \fill (O) circle (1pt);
 \fill (P) circle (1pt);
 \fill (Q) circle (1pt);
 \fill (R) circle (1pt);
\end{tikzpicture}
\caption{\label{fig-van-kampen}A van Kampen diagram witnessing the equality of two Britton-reduced words in the HNN-extension $H$.}
\end{figure}

For our purposes, we will need the following lemma (cf. Lemma 2.3 of \cite{HauLo11}), which allows us to transform an arbitrary string over the generating set of an HNN-extension into a reduced one:

\begin{lemma}\label{red uv}
Assume that $u=g_0 t^{\delta_1}g_1 \cdots t^{\delta_k}g_k$ and
$v=h_0 t^{\varepsilon_1}h_1 \cdots t^{\varepsilon_\ell} h_\ell$ are Britton-reduced words ($g_i, h_j \in G$). Let $m(u,v)$ be the largest number $m\geq 0$ such that
\begin{enumerate}[(a)]
\item $A(\delta_{k-m+1})=A(-\varepsilon_m)$ (we set $A(\delta_{k+1})=A(-\varepsilon_0)=1$) and
\item there is  $c \in A(-\varepsilon_m)$ such that
$$t^{\delta_{k-m+1}} g_{k-m+1} \cdots t^{\delta_k} g_k h_0 t^{\varepsilon_1} \cdots h_{m-1} t^{\varepsilon_m} =_H c$$ 
(for $m=0$ this condition is satisfied with $c=1$).
\end{enumerate}
Moreover, let $c(u,v) \in A(-\varepsilon_m)$ be the element $c$ in (b) (for $m=m(u,v)$). Then
$$
g_0 t^{\delta_1}g_1 \cdots t^{\delta_{k-m(u,v)}} \gamma(u,v) t^{\varepsilon_{m(u,v)+1}}h_{m(u,v)+1} \cdots t^{\varepsilon_\ell}h_\ell
$$
is a Britton-reduced word equal to $uv$ in $H$, where $\gamma(u,v) \in  G$ such that
$\gamma(u,v) =_G g_{k-m(u,v)} c(u,v) h_{m(u,v)}$.
\end{lemma}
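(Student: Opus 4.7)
My plan is to verify the two assertions of the lemma separately: first, that the displayed word equals $uv$ in $H$, and second, that it is Britton-reduced. Before doing either, I would note that the set of $m \geq 0$ satisfying conditions (a) and (b) is nonempty (it contains $m = 0$, with $c = 1$, by the stated boundary conventions) and bounded above by $\min(k,l)$, so $m(u,v)$ is well-defined as the maximum of a finite nonempty set.

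For equality, write $m = m(u,v)$ and $c = c(u,v)$. Condition (b) directly gives
$$t^{\delta_{k-m+1}} g_{k-m+1} \cdots t^{\delta_k} g_k h_0 t^{\varepsilon_1} \cdots h_{m-1} t^{\varepsilon_m} =_H c,$$
so
$$uv =_H g_0 t^{\delta_1} \cdots t^{\delta_{k-m}} g_{k-m} \cdot c \cdot h_m t^{\varepsilon_{m+1}} h_{m+1} \cdots t^{\varepsilon_l} h_l.$$
Merging the three central $G$-elements into $\gamma(u,v) =_G g_{k-m} c h_m$ yields the word displayed in the lemma. The degenerate cases $m \in \{0, k, l\}$ are handled by the standard convention that empty products equal $1$.

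For Britton-reducedness, I would argue by contradiction. Suppose the displayed word $w$ contains a pin $t^{-\alpha} a t^{\alpha}$ with $a \in A(\alpha)$. Since the factorizations of $u$ and $v$ are themselves Britton-reduced, such a pin cannot lie entirely inside the prefix $g_0 t^{\delta_1} \cdots t^{\delta_{k-m}}$ (a factor of $u$) or entirely inside the suffix $t^{\varepsilon_{m+1}} h_{m+1} \cdots t^{\varepsilon_l} h_l$ (a factor of $v$). The only remaining possibility is the central occurrence $t^{\delta_{k-m}} \gamma(u,v) t^{\varepsilon_{m+1}}$, which forces $\delta_{k-m} = -\varepsilon_{m+1}$ and $\gamma(u,v) \in A(\varepsilon_{m+1})$.

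The main obstacle is to derive a contradiction from the existence of this straddling pin, which I would do by showing that $m+1$ then also satisfies conditions (a) and (b), violating the maximality of $m$. Condition (a) for $m+1$ reads $A(\delta_{k-m}) = A(-\varepsilon_{m+1})$, which is immediate from $\delta_{k-m} = -\varepsilon_{m+1}$. For condition (b), combining the equality established above with the factor $h_m t^{\varepsilon_{m+1}}$ gives
\begin{align*}
& t^{\delta_{k-m}} g_{k-m} \cdot t^{\delta_{k-m+1}} g_{k-m+1} \cdots t^{\delta_k} g_k h_0 \cdots h_{m-1} t^{\varepsilon_m} \cdot h_m t^{\varepsilon_{m+1}} \\
&\qquad =_H t^{\delta_{k-m}} g_{k-m} c h_m t^{\varepsilon_{m+1}} = t^{\delta_{k-m}} \gamma(u,v) t^{\varepsilon_{m+1}},
\end{align*}
and the HNN-relation $t^{-\varepsilon_{m+1}} \gamma(u,v) t^{\varepsilon_{m+1}} =_H \varphi^{\varepsilon_{m+1}}(\gamma(u,v))$ rewrites this to the element $\varphi^{\varepsilon_{m+1}}(\gamma(u,v)) \in A(-\varepsilon_{m+1})$, which serves as the witness $c'$ for condition (b) at level $m+1$. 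This contradicts the maximality of $m(u,v)$. Hence no pin exists in $w$, and $w$ is Britton-reduced, completing the proof.
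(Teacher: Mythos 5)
Your proof is correct. Note that the paper does not actually prove this lemma --- it is quoted as a (slight restatement of) Lemma~2.3 of the cited work of Haubold and Lohrey --- so there is no in-paper argument to compare against; your two-step argument (equality of the displayed word with $uv$ by substituting condition (b), and Britton-reducedness by observing that a pin could only straddle the junction $t^{\delta_{k-m}}\gamma(u,v)t^{\varepsilon_{m+1}}$ and would then, via the relation $t^{-\varepsilon_{m+1}}\gamma t^{\varepsilon_{m+1}}=_H\varphi^{\varepsilon_{m+1}}(\gamma)$, make $m+1$ satisfy (a) and (b), contradicting maximality) is exactly the standard argument and is complete; the only implicit point, that $k-m\ge 1$ and $m+1\le l$ so that $m+1$ is an admissible candidate, is guaranteed by the very existence of the straddling pin.
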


The above lemma is visualized in Figure~\ref{fig-peak-elimination}.

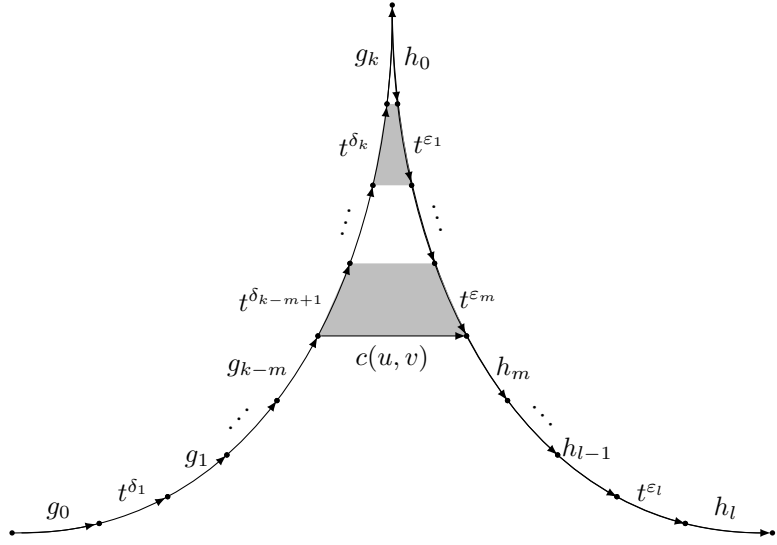
\begin{figure}[t]
\begin{center}
\begin{tikzpicture}[x=0.5cm,y=0.5cm]
 \node (A) at (-10,0) {};
 \node (J) at (0,14) {};
 \node (S) at (10,0) {};
 
  \tikzset{>=latex}
\path[->,shorten >= -3pt, shorten <= -3pt, nodes={circle,fill=black,inner sep=0pt,minimum size=2pt}] (A) to[out=0,in=270]
  node[pos=0.11] (B) {}
  node[pos=0.22] (C) {}
  node[pos=0.33] (D) {}
  node[pos=0.44] (E) {}
  node[pos=0.55] (F) {}
  node[pos=0.66] (G) {}
  node[pos=0.77] (H) {}
  node[pos=0.88] (I) {}
  (J);
  
\draw[->,shorten >= -3pt, shorten <= -3pt, nodes={circle,fill=black,inner sep=0pt,minimum size=2pt}] (J) to[out=270,in=180]
  node[pos=0.12] (K) {}
  node[pos=0.23] (L) {}
  node[pos=0.34] (M) {}
  node[pos=0.45] (N) {}
  node[pos=0.56] (O) {}
  node[pos=0.67] (P) {}
  node[pos=0.78] (Q) {}
  node[pos=0.89] (R) {}
  (S);
  
 \path[fill=gray!50,opacity=1.0]
 (F.center) to (G.center) to (M.center) to (N.center)
 (H.center) to (I.center) to (K.center) to (L.center);
 
\draw[->,shorten <= -3pt] (A) to[bend right=6] (B);

\begin{scope}[decoration={
    markings,
    mark=at position 0.211 with {\arrow{>}}}
    ] 
    \path[postaction={decorate},shorten >= -3pt, shorten <= -3pt] (A) to[out=0,in=270] (J);
\end{scope}

\begin{scope}[decoration={
    markings,
    mark=at position 0.315 with {\arrow{>}}}
    ] 
    \path[postaction={decorate},shorten >= -3pt, shorten <= -3pt] (A) to[out=0,in=270] (J);
\end{scope}

\begin{scope}[decoration={
    markings,
    mark=at position 0.42 with {\arrow{>}}}
    ] 
    \path[postaction={decorate},shorten >= -3pt, shorten <= -3pt] (A) to[out=0,in=270] (J);
\end{scope}

\begin{scope}[decoration={
    markings,
    mark=at position 0.527 with {\arrow{>}}}
    ] 
    \path[postaction={decorate},shorten >= -3pt, shorten <= -3pt] (A) to[out=0,in=270] (J);
\end{scope}

\begin{scope}[decoration={
    markings,
    mark=at position 0.64 with {\arrow{>}}}
    ] 
    \path[postaction={decorate},shorten >= -3pt, shorten <= -3pt] (A) to[out=0,in=270] (J);
\end{scope}

\begin{scope}[decoration={
    markings,
    mark=at position 0.755 with {\arrow{>}}}
    ] 
    \path[postaction={decorate},shorten >= -3pt, shorten <= -3pt] (A) to[out=0,in=270] (J);
\end{scope}

\begin{scope}[decoration={
    markings,
    mark=at position 0.87 with {\arrow{>}}}
    ] 
    \draw[postaction={decorate}] (A) to[out=0,in=270] (J);
\end{scope}

\draw[->,shorten >= -3pt, shorten <= -3pt] (I) to[bend right=3.5] (J);

\begin{scope}[decoration={
    markings,
    mark=at position 0.125 with {\arrow{>}}}
    ] 
    \path[postaction={decorate},shorten >= -3pt, shorten <= -3pt] (J) to[out=270,in=180] (S);
\end{scope}

\begin{scope}[decoration={
    markings,
    mark=at position 0.24 with {\arrow{>}}}
    ] 
    \path[postaction={decorate},shorten >= -3pt, shorten <= -3pt] (J) to[out=270,in=180] (S);
\end{scope}

\begin{scope}[decoration={
    markings,
    mark=at position 0.358 with {\arrow{>}}}
    ] 
    \path[postaction={decorate},shorten >= -3pt, shorten <= -3pt] (J) to[out=270,in=180] (S);
\end{scope}

\begin{scope}[decoration={
    markings,
    mark=at position 0.469 with {\arrow{>}}}
    ] 
    \path[postaction={decorate},shorten >= -3pt, shorten <= -3pt] (J) to[out=270,in=180] (S);
\end{scope}

\begin{scope}[decoration={
    markings,
    mark=at position 0.579 with {\arrow{>}}}
    ] 
    \path[postaction={decorate},shorten >= -3pt, shorten <= -3pt] (J) to[out=270,in=180] (S);
\end{scope}

\begin{scope}[decoration={
    markings,
    mark=at position 0.684 with {\arrow{>}}}
    ] 
    \path[postaction={decorate},shorten >= -3pt, shorten <= -3pt] (J) to[out=270,in=180] (S);
\end{scope}

\begin{scope}[decoration={
    markings,
    mark=at position 0.787 with {\arrow{>}}}
    ] 
    \path[postaction={decorate},shorten >= -3pt, shorten <= -3pt] (J) to[out=270,in=180] (S);
\end{scope}

\begin{scope}[decoration={
    markings,
    mark=at position 0.89 with {\arrow{>}}}
    ] 
    \draw[postaction={decorate}] (J) to[out=270,in=180] (S);
\end{scope}

\tikzset{>=latex}
 \path[line width=0pt] (A) -- (B) node[midway,above] {$g_0$};
 \path[line width=0pt] (B) -- (C) node[midway,above] {$t^{\delta_1}$};
 \path[line width=0pt] (C) -- (D) node[midway,above] {$g_1$};
 \path[line width=0pt] (D) -- (E) node[midway,sloped,above] {$\cdots$};
 \path[line width=0pt] (E) -- (F) node[midway,left] {$g_{k-m}$};
 \path[line width=0pt] (F) -- (G) node[midway,left] {$t^{\delta_{k-m+1}}$};
 \path[line width=0pt] (G) -- (H) node[midway,sloped,above] {$\cdots$};
 \path[line width=0pt] (H) -- (I) node[midway,left] {$t^{\delta_k}$};
 \path[line width=0pt] (I) -- (J) node[midway,left] {$g_k$};
 
 \path[line width=0pt] (J) -- (K) node[midway,right] {$h_0$};
 \path[line width=0pt] (K) -- (L) node[midway,right] {$t^{\varepsilon_1}$};
 \path[line width=0pt] (L) -- (M) node[midway,sloped,above] {$\cdots$};
 \path[line width=0pt] (M) -- (N) node[midway,right] {$t^{\varepsilon_m}$};
 \path[line width=0pt] (N) -- (O) node[midway,right] {$h_m$};
 \path[line width=0pt] (O) -- (P) node[midway,sloped,above] {$\cdots$};
 \path[line width=0pt] (P) -- (Q) node[midway,above=2pt] {$h_{\ell-1}$};
 \path[line width=0pt] (Q) -- (R) node[midway,above] {$t^{\varepsilon_\ell}$};
 \path[line width=0pt] (R) -- (S) node[midway,above] {$h_\ell$};
 
 \draw[->, shorten >= -1pt] (F) -- (N) node[midway,below] {$c(u,v)$};
 
 \fill (A) circle (1pt);
 \fill (B) circle (1pt);
 \fill (C) circle (1pt);
 \fill (D) circle (1pt);
 \fill (E) circle (1pt);
 \fill (F) circle (1pt);
 \fill (G) circle (1pt);
 \fill (H) circle (1pt);
 \fill (I) circle (1pt);
 
 \fill (J) circle (1pt);
 
 \fill (K) circle (1pt);
 \fill (L) circle (1pt);
 \fill (M) circle (1pt);
 \fill (N) circle (1pt);
 \fill (O) circle (1pt);
 \fill (P) circle (1pt);
 \fill (Q) circle (1pt);
 \fill (R) circle (1pt);
 \fill (S) circle (1pt);
\end{tikzpicture}
\end{center}
\caption{\label{fig-peak-elimination} The situation from Lemma~\ref{red uv}.}
\end{figure}

\begin{lemma}\label{Britton-reduced-powers}
From a given word $u\in \BR(H)$ we can compute words $s,p,v \in \BR(H)$ such that $u^m =_H s v^m p$ for every $m\geq 0$ 
and either $v \in G$ or $v$ is well-behaved and starts with $t^{\pm 1}$.
Moreover, $\norm{s}+\norm{p}+\norm{v} \leq 3\norm{u}$.
\end{lemma}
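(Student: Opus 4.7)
The plan is to mimic the proof of Lemma~\ref{power-presentation} from the graph-product setting: iteratively ``cyclically reduce'' $u$ by conjugation until either it becomes a $G$-element or its square is Britton-reduced, and then do one last rotation to ensure the result starts with $t^{\pm 1}$. If $u \in G$, take $s = p = 1$, $v = u$ and we are done. Otherwise write $u$ in Britton-reduced form as $u = g_0 t^{\delta_1} g_1 \cdots t^{\delta_k} g_k$ with $k \ge 1$.

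Inductively maintain words $s_i, p_i, u_i \in \BR(H)$ with $u^m =_H s_i u_i^m p_i$ for all $m \ge 0$. Starting from $(s_0, u_0, p_0) = (1, u, 1)$, if $u_i^2$ is not Britton-reduced then its unique possible pin sits at the inter-copy boundary, so writing $u_i = g_0^{(i)} t^{\delta_1^{(i)}} \tilde u\, t^{-\delta_1^{(i)}} g_k^{(i)}$ the pin condition forces $g_k^{(i)} g_0^{(i)} \in A(\delta_1^{(i)})$. Put $\tilde s_i := g_0^{(i)} t^{\delta_1^{(i)}}$; a direct computation using the HNN-relation yields
\[ \tilde s_i^{-1} u_i \tilde s_i \;=_H\; \tilde u \cdot \varphi^{\delta_1^{(i)}}\!\bigl(g_k^{(i)} g_0^{(i)}\bigr), \]
which has two fewer $t^{\pm 1}$-letters than $u_i$. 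Take $u_{i+1}$ to be its Britton-reduced form and set $s_{i+1} := s_i \tilde s_i$, $p_{i+1} := \tilde s_i^{-1} p_i$. The $t$-count strictly decreases, so the loop terminates at some $u_l$ that is either in $G$ or satisfies $u_l^2 \in \BR(H)$. In the first case return $(s_l, u_l, p_l)$; in the second, write $u_l = g_0' t^{\epsilon_1} g_1' \cdots t^{\epsilon_j} g_j'$ with $j \ge 1$ and perform one final rotation, setting $v := t^{\epsilon_1} g_1' \cdots t^{\epsilon_j}(g_j' g_0')$, $s := s_l g_0'$, $p := (g_0')^{-1} p_l$. Then $v$ starts with $t^{\pm 1}$, and the only boundary pin candidate in $v^2$ is literally the one in $u_l^2$, so $v^2 \in \BR(H)$. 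An HNN-analogue of Lemma~\ref{lemma-square}---the single-boundary pin test suffices because the unique potential pin site in $v^m$ is the same for all $m \ge 2$---then guarantees $v^m \in \BR(H)$ for every $m$, i.e.\ $v$ is well-behaved.

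For the length bound, a short calculation shows $\norm{u_{i+1}} \le \norm{u_i} - \norm{g_0^{(i)}} - \norm{g_k^{(i)}} - 1$: two $t$'s and the $G$-elements $g_0^{(i)}, g_k^{(i)}$ disappear, while the merger $g_{k-1}^{(i)} \varphi^{\delta_1^{(i)}}(g_k^{(i)} g_0^{(i)})$ grows in geodesic length by at most $1$, since the inserted factor lies in $A \cup B \subseteq \Sigma$. Hence $\norm{u_i} - \norm{u_{i+1}} \ge \norm{\tilde s_i}$, and telescoping gives $\norm{s_l}, \norm{p_l} \le \norm{u} - \norm{u_l}$. With $\norm{g_0'} \le \norm{u_l}$ and $\norm{v} \le \norm{u_l}$, we arrive at $\norm{s} + \norm{p} + \norm{v} \le 2(\norm{u} - \norm{u_l}) + 2\norm{u_l} + \norm{u_l} = 2\norm{u} + \norm{u_l} \le 3\norm{u}$. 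The main obstacles are verifying the HNN-analogue of Lemma~\ref{lemma-square} and checking that the length-tracking remains clean in degenerate cases such as $g_0^{(i)} = 1$ or $g_k^{(i)} = 1$; both reduce to the observation that the only reduction site in a power of a Britton-reduced word is the inter-copy boundary.
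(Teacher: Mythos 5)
Your proof is correct, but it takes a different route than the paper. The paper's proof handles all of the cancellation between consecutive copies of $u$ in a single stroke: it applies Lemma~\ref{red uv} to the product $u\cdot u$, which immediately yields a factorization $u=xyz$ with $zx=_H c\in A\cup B$, whence $u^m=_H x(yc)^m c^{-1}z$; it then performs the same final rotation as you do (splitting off the $G$-prefix and $G$-suffix of the middle part) to make $v$ start and end so that it is well-behaved, and the norm bound is immediate because $s$, $v$, $p$ are each built from a prefix/suffix of $u$ plus one letter of $A\cup B\subseteq\Sigma$, so each has norm at most $\norm{u}$. You instead peel off one boundary pin at a time by conjugation, mirroring the iterative cyclic-reduction proof of Lemma~\ref{power-presentation} in the graph-product setting; this avoids invoking Lemma~\ref{red uv} altogether, at the cost of the telescoping norm estimate, which you carry out correctly. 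Two loose ends in your write-up are harmless: (i) the ``square test'' you flag is exactly the fact the paper records right after the definition of well-behaved ($w$ is well-behaved iff $w$ and $w^2$ are Britton-reduced), and your sketch of why a pin in $v^m$ must straddle a single copy boundary is the standard argument; (ii) you do not check that $s=s_l g_0'$ and $p=(g_0')^{-1}p_l$ are Britton-reduced, as the statement requires -- this holds because $s_l=\tilde s_0\tilde s_1\cdots\tilde s_{l-1}$ is in fact a prefix of the Britton-reduced word $u$ (and $p_l$ its inverse), and appending or prepending a single $G$-letter cannot create a pin; alternatively one may Britton-reduce $s$ and $p$ at the end, which never increases $\norm{\cdot}$.
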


\begin{proof}
Let $u\in \BR(H)$. Assume that $u$ is not atomic; otherwise we are done. 
Let us now consider the word $u^2$. If $u^2$ is not Britton-reduced, we can do the following: using Britton-reduction 
we compute a factorization $u=xyz$ such that $zx =_H g \in G$ and hence $u^2 =_H xygyz$, where
moreover $x$ and $y$ are chosen such that the sum $|x|+|z|$ is maximal.
Note that either $y=1$ or $y$ begins and ends with $t$ or $t^{-1}$ (otherwise we could make $x$ or $z$ longer).
Moreover, $\norm{g} \leq \norm{u}$.
We obtain the equality $u^m = (xyz)^m =_H x (yg)^m g^{-1} z$ for every $m \geq 0$.
We set  $s = x$, $v = y g$, and $p = g^{-1} z$. If $y=1$, we have $v = g \in G$. Now assume that
$y$ begins and ends with $t$ or $t^{-1}$. Since $y$ as a factor of $u$ must be Britton-reduced,
also $v = yg$ is Britton-reduced. Moreover, $v^2 = y g y g$ must be Britton-reduced, otherwise
we could extend the length of $x$ and $z$ in the factorization $u=xyz$. 
Finally, we have $\norm{s}+\norm{p}+\norm{v} = \norm{x}+\norm{y}+\norm{z} + 2\norm{g} 
\leq 3 \norm{u}$.
\end{proof}

\begin{lemma} \label{2dim}
Let $u,v \in \BR(H) \backslash G$ be well-behaved, both starting with $t^{\pm 1}$, $a,b \in A \cup B$, $u'$ (resp., $v'$) be a proper suffix of $u$ (resp., $v$) and
$u''$ (resp., $v''$) be a proper prefix of $u$ (resp., $v$).
Let $\mu = \max\{ |u|, |v|\}$.
Then the set 
$$L(a,u',u,u'',v',v,v'',b) = \{ (x,y) \in \mathbb{N} \times \mathbb{N} \mid a u' u^x u'' =_H v' v^y v'' b \}$$
is semilinear.
Moreover, one can compute in polynomial time a semilinear representation whose magnitude is bounded by $\mathcal{O}(\gamma^{2} \mu^{4})$. 
\end{lemma}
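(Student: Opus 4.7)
The plan is to apply Lemma~\ref{boat} to the equation $au'u^x u''=_H v'v^y v''b$ after bringing both sides to Britton-reduced form, and to express the three resulting conditions — equal number of stable letters, identical sign sequences, and existence of connecting elements — as semilinear constraints on $(x,y)$. Since $u$ is well-behaved, $u^{x+2}$ is Britton-reduced, and hence so is its factor $u'u^x u''$ for every $x \ge 0$. Left-multiplication by $a \in A\cup B$ can only merge with or cancel the leftmost group symbol of $u'u^x u''$, leaving the number of $t^{\pm 1}$-occurrences unchanged; an analogous remark applies on the right for $b$. Denoting by $|w|_t$ the number of $t^{\pm 1}$-symbols in a word $w$, we thus obtain Britton-reduced representations $L(x) = g^L_0 t^{\delta_1} g^L_1 \cdots t^{\delta_{k_x}} g^L_{k_x}$ and $R(y) = h^R_0 t^{\varepsilon_1} h^R_1 \cdots t^{\varepsilon_{m_y}} h^R_{m_y}$ with $k_x = |u'|_t + x|u|_t + |u''|_t$ and $m_y = |v'|_t + y|v|_t + |v''|_t$; their middle portions are purely periodic with periods $|u|_t$ and $|v|_t$ respectively, inherited from $u$ and $v$.

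Lemma~\ref{boat} then reduces $L(x) =_H R(y)$ to the conjunction of: (i) $k_x = m_y$; (ii) $\delta_i = \varepsilon_i$ for $1\le i\le k_x$; and (iii) the existence of connecting elements $c_0=c_{2k_x+1}=1$ and $c_1,\ldots,c_{2k_x}\in A\cup B$ satisfying the relations of the lemma. Condition (i) is a linear Diophantine equation in $(x,y)$; its solution set is either empty or a single linear set $(x_0,y_0) + \mathbb{N}\cdot(|v|_t/d,|u|_t/d)$ with $d=\gcd(|u|_t,|v|_t)$ and base $(x_0,y_0)$ of size $\mathcal{O}(\mu^2)$. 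Condition (ii) compares two periodic sign sequences: it either fails at some position $\le \mathrm{lcm}(|u|_t,|v|_t) \le \mu^2$, reducing the solution set to a finite one of magnitude $\mathcal{O}(\mu^2)$, or is satisfied for all $i$. In either case, (i) and (ii) together carve out a linear subset of $\mathbb{N}^2$ of magnitude $\mathcal{O}(\mu^2)$.

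The core of the argument is condition (iii). Given $(x,y)$ satisfying (i) and (ii), the recurrences $c_{2i-1} = (g^L_{i-1})^{-1} c_{2i-2} h^R_{i-1}$ in $G$ and $c_{2i}=\varphi^{\delta_i}(c_{2i-1})$, together with $c_0=1$ and the membership checks $c_{2i-1}\in A(\delta_i)$, determine the entire sequence $(c_j)$ as a deterministic partial-function iteration on the state space $A\cup B$ of cardinality at most $\gamma$. After the $\mathcal{O}(\mu)$ positions governed by the boundary data (from $a,u'$ on the left and from $v'$ on the right), the transitions enter a fully periodic regime; composing one joint period of length $P := \mathrm{lcm}(|u|_t,|v|_t) \le \mu^2$ yields a fixed partial map $T\colon A\cup B \to A\cup B$. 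Since $|A\cup B|\le\gamma$, every orbit of $T$ is ultimately periodic with pre-period and period both at most $\gamma$, so the set of joint-iteration counts $N$ for which $T^N$ applied to the post-prefix state, followed by the suffix propagation, yields $c_{2k_x+1}=1$ is a semilinear subset of $\mathbb{N}$ of magnitude $\mathcal{O}(\gamma)$. Pulling back along $(x,y) = (x_0 + NP/|u|_t,\,y_0 + NP/|v|_t)$ and taking the union over the finitely many boundary and alignment cases yields a semilinear representation of $L(a,u',u,u'',v',v,v'',b)$ of magnitude $\mathcal{O}(\gamma^2\mu^4)$. Every ingredient — Britton-reduction, solving the Diophantine equation, computing $T$, and orbit analysis — runs in polynomial time in $\mu$ and the representation of $G$.

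The main obstacle is a careful bookkeeping of the boundary regions: one must verify that once the first $\mathcal{O}(\mu)$ positions of $L(x)$ and $R(y)$ are absorbed into a fixed prefix independent of $x,y$, the subsequent transitions really are governed by a single partial map $T$ on $A\cup B$, and that the terminal condition $c_{2k_x+1}=1$ reduces to a fixed set of at most $\gamma$ admissible pre-suffix states. Once this is in place, the ultimately-periodic orbit structure of iteration on a $\gamma$-element set, combined with the linear structure of the length equation, delivers the semilinear conclusion directly.
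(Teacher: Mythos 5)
Your proposal is correct in substance and rests on the same two pillars as the paper's proof, namely Lemma~\ref{boat} and the observation that the connecting elements range over the finite set $A\cup B$ while the two sides are periodic powers; but the way you extract semilinearity is genuinely different. The paper folds all three of your conditions into a single unary NFA whose states are triples (connecting element, phase in $u$, phase in $v$), of size $\mathcal{O}(\gamma\mu^2)$, accepts exactly the attainable $t^{\pm 1}$-lengths, and then invokes the unary-automaton structure theorem of To (as in Remark~\ref{rem-2dim-words}) to obtain the union of arithmetic progressions with constants $\mathcal{O}(\gamma^2\mu^4)$, finally converting lengths back into the exponents $x,y$. You instead decompose: the length equation gives one linear set, the sign comparison an eventually periodic filter, and the connecting-element propagation—which you correctly observe is a \emph{deterministic} partial map (the paper never uses this)—an ultimately periodic orbit on a set of size $|A\cup B|\le 2\gamma$ (note it is $2\gamma$, not $\gamma$; this only shifts constants). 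This buys you a self-contained argument with no citation of To's theorem and, if carried out, an even slightly sharper magnitude bound of order $\gamma\mu+\mu^2$; what it costs is exactly the bookkeeping you flag: the sign comparison can also fail in the suffix region governed by $u''$, $v''$ (not only at a position below $\mathrm{lcm}(|u|_{t^{\pm1}},|v|_{t^{\pm1}})$), and the small values of the parameter where the prefix, suffix and a full joint period do not fit disjointly into the diagram must be treated as finitely many explicitly checked cases; along the one-parameter family given by the length equation these issues are uniform beyond an $\mathcal{O}(\mu)$ threshold, so your sketch does close, but these alignment cases are precisely what the paper's automaton formulation disposes of silently.
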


\begin{proof}
	The proof is inspired by the proof of \cite[Lemma 8.3]{LOHREY2019} for hyperbolic groups.
The assumptions in the lemma imply that for all $x,y \in \mathbb{N}$ the words
$a u' u^x u''$ and $v' v^y v'' b$ are Britton-reduced (possibly after multiplying $a$ and $b$ with neighboring symbols from $\Gamma$.
For a word $w \in (\Gamma \cup \{t,t^{-1}\})^*$ we define $|w|_{t^{\pm 1}} = |w|_{t} + |w|_{t^{-1}}$ (the $t^{\pm 1}$-length of $w$).
Clearly, for Britton-reduced words $w,w'$ with $w =_H w'$ we have $|w|_{t^{\pm 1}} = |w'|_{t^{\pm 1}}$.

We will first construct an automaton $\mathcal{A}$ over the unary alphabet $\{ \# \}$, where $\#$ is a fresh letter, such that
$$
L(\mathcal{A}) = \{ \#^\ell \mid \exists x,y \in \mathbb{N} : a u' u^x u'' =_H v' v^y v'' b, \ell = |a u' u^x u''|_{t^{\pm 1}} \}.
$$
Moreover, the number of states of $\mathcal{A}$ is $\mathcal{O}(\mu^2 \cdot  \gamma)$. 
Roughly speaking, the automaton $\mathcal{A}$ verifies from left to right the existence of a van Kampen diagram of the form shown in Figure~\ref{fig-van-kampen}.
Thereby it stores the current connecting element (an element from $A \cup B$).
By the assumptions on $u$ and $v$ we can write both words as
$u = u_0 u_{1} \cdots u_{m-1}$, $v = v_{0} v_{1} \cdots v_{n-1}$ where $n$ and $m$ are even, $u_{i} \in \{t,t^{-1}\}$ for $i$ even and $u_{i} \in G$ for $i$ odd, and analogously for $v$.
Let us write $u' = u_{p} u_{p+1} \cdots u_{m-1}$, $u'' = u_{0} u_{1} \cdots u_{q-1}$,  $v' = v_{r} v_{r+1} \cdots v_{n-1}$, $v'' = v_{0} v_{1} \cdots v_{s-1}$.
We set $p = m$ if $u'$ is empty and $q = 0$ if $u''$ is empty and similarly for $r$ and $s$.
We will first consider the case that $p \equiv r \mod 2$ and $q \equiv s \mod 2$; other cases are just briefly sketched at the end of the proof.

The state set of $\mathcal{A}$ is
$$
Q = \{ (c,i,j) \mid c \in A \cup B,  0 \leq i < m, 0 \leq j < n, i \equiv j \bmod 2 \}.
$$
The initial state is $(a,p \bmod m, r \bmod n)$ and the only final state is $(b, q \mod m, s \mod n)$. Finally, $\mathcal{A}$ contains the following transitions for
 $c_1, c_2 \in A \cup B$ such that $c_1 u_{i} =_H v_{j} c_2$ (in case $i$ and $j$ are odd, this must be an identity in $G$ since $u_i, v_j \in G$):
\begin{itemize}
\item $(c_1,i,j)  \xrightarrow{\#} (c_2, i+1 \mod m, j+1 \mod n)$ if $i$ and $j$ are even,
\item $(c_1,i,j)  \xrightarrow{1} (c_2, i+1 \mod m, j+1 \mod n)$ if $i$ and $j$ are odd.
\end{itemize}
The number of states of $\mathcal{A}$ is $\mathcal{O}(\gamma \cdot \mu^2)$. 
 If $p \equiv r \mod 2$ does not hold, then we have to introduce a fresh initial state $q_0$.
 Assume that for instance $p$ is odd and $r$ is even. Thus $u_p$ belongs to $G$ whereas $v_r$ is $t$ or $t^{-1}$.
 Then we add all transitions $q_0 \xrightarrow{1} (c,p+1 \mod m, r\mod n)$ for every $c \in A \cup B$ with $a u_{p} =_G c$.
 If $q \equiv s \mod 2$ does not hold, then we have to add a fresh final state $q_f$.
 
 The rest of the argument is the same as in Remark~\ref{rem-2dim-words}, we only have to replace
 the length of words by the $t^{\pm 1}$-length. We obtain a semilinear representation of $L(a,u',u,u'',v',v,v'',b)$
 of magnitude $\mathcal{O}(\gamma^2 \cdot \mu^4)$. 
\end{proof}

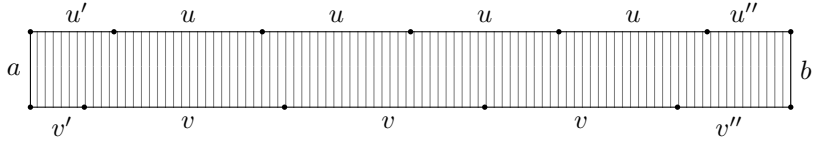
\begin{figure}[t]
\begin{tikzpicture}
\node (A) at (0,0) {};
\node (B) at (0,1) {};
\node (C) at (10,0) {};
\node (D) at (10,1) {};

\path[pattern=vertical lines, pattern color=gray] (A) rectangle (D);

\draw[shorten <= -3.5, shorten >= -3.5] (A) -- (B) node[midway,left] {$a$};
\draw[shorten <= -3.5, shorten >= -3.5] (D) -- (C) node[midway,right] {$b$};

\draw[shorten <= -3.5, shorten >= -3.5] (B) to
node[pos=0.05,above] (E) {$u'$}
node[pos=0.10] (F) {}
node[pos=0.2,above] (G) {$u$}
node[pos=0.3] (H) {}
node[pos=0.4,above] (I) {$u$}
node[pos=0.5] (J) {}
node[pos=0.6,above] (K) {$u$}
node[pos=0.7] (L) {}
node[pos=0.8,above] (M) {$u$}
node[pos=0.9] (N) {}
node[pos=0.95,above] (O) {$u''$}
(D);

\draw[shorten <= -3.5, shorten >= -3.5] (A) to
node[pos=0.03,below] (P) {$v'$}
node[pos=0.06] (Q) {}
node[pos=0.2,below] (R) {$v$}
node[pos=0.33] (S) {}
node[pos=0.47,below] (T) {$v$}
node[pos=0.6] (U) {}
node[pos=0.73,below] (V) {$v$}
node[pos=0.86] (W) {}
node[pos=0.93,below] (X) {$v''$}
(C);

 \fill (A) circle (1pt);
 \fill (B) circle (1pt);
 \fill (C) circle (1pt);
 \fill (D) circle (1pt);
 
 \fill (F) circle (1pt);
 \fill (H) circle (1pt);
 \fill (J) circle (1pt);
 \fill (L) circle (1pt);
 \fill (N) circle (1pt);
 
 \fill (Q) circle (1pt);
 \fill (S) circle (1pt);
 \fill (U) circle (1pt);
 \fill (W) circle (1pt);
\end{tikzpicture}
\caption{In the proof of Lemma~\ref{2dim} the automaton stores the connecting elements, i.e. checks the rectangles.}
\end{figure}

We now define $1$-reducible tuples for HNN-extensions similar to the case of graph products.
Again we identify tuples that can be obtained from each other by inserting/deleting $1$'s at
arbitrary positions.

\begin{defi} \label{def-reduction-HNN}
We define a reduction relation on tuples over $\BR(H)$ of arbitrary length.
Take $u_1, u_2, \ldots, u_m \in \BR(H)$. Then we have
\begin{itemize}
\item  $(u_1, u_2, \ldots, u_i, a, u_{i+1}, \ldots, u_m)  \to (u_1, \ldots, u_{i-1},b, u_{i+2}, \ldots, u_m)$  if both $u_i$ and $u_{i+1}$ contain
$t$ or $t^{-1}$ and  $u_i a u_{i+1} =_H b$ for $a,b \in A \cup B$ (a {\em generalized cancellation step}),
\item $(u_1, u_2, \ldots, u_m) \to (u_1, \ldots, u_{i-1}, g, u_{i+2}, \ldots, u_m)$ if
$u_i, u_{i+1} \in \Gamma$ and $g =_G u_i u_{i+1} \in G$ 
\end{itemize}
If $g \neq 1$ then we call the last rewrite step  an {\em atom creation}.
A concrete sequence of these rewrite steps leading to the empty tuple
is a {\em reduction} of $(u_1, u_2, \ldots, u_m)$. 
If such a sequence exists, the tuple is called {\em $1$-reducible}.
\end{defi}

A reduction of a tuple $(u_1, u_2, \ldots, u_m)$ can be seen as
a witness for the fact that $u_1 u_2 \cdots u_m =_H 1$. 
On the other hand, $u_1 u_2 \cdots u_m =_H 1$ does not necessarily imply
that $u_1, u_2, \ldots, u_m$ has a reduction (as seen for graph products). But we can show that every sequence which multiplies to $1$ in $H$ can be refined
(by factorizing the elements of the sequence) such that the resulting refined sequence has a reduction. We say that the tuple $(v_1, v_2, \dots, v_n)$ is a \emph{refinement} of the tuple $(u_1, u_2, \dots, u_m)$ if there exist factorizations $u_i=u_{i,1} \cdots u_{i,k_i}$ in $(\Gamma \cup \{t,t^{-1}\})^*$ such that
$(v_1, v_2, \dots, v_n)=(u_{1,1}, \dots, u_{1,k_1}, \dots, u_{m,1}, \dots, u_{m,k_m})$.

\begin{lemma} \label{Britton-lemma-reduction}
Let $m \geq 2$ and $u_1, u_2, \ldots, u_m \in \BR(H)$. If $u_1 u_2 \cdots u_m = 1$ in
$H$, then there exists a $1$-reducible refinement of $(u_1, u_2, \dots , u_m)$ that has length at most
$7m-12\leq 7m$ and there is a reduction of this refinement with at most $4m-8$ atom creations.
\end{lemma}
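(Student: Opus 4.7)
The proof is by induction on $m$, adapting the strategy of Lemma~\ref{lemma-reduction-free-product} to the HNN setting via Lemma~\ref{red uv}. For the base case $m = 2$ we have $u_2 =_H u_1^{-1}$: if both $u_i \in G$, the single rewrite $(u_1, u_2) \to (1)$ (a $G$-combination producing $1$, hence not an atom creation) suffices; otherwise both contain $t^{\pm 1}$, and the generalized cancellation $(u_1, 1, u_2) \to (1)$ with $a = b = 1 \in A \cap B$ applies. Either way the tuple $(u_1, u_2)$ itself has length $2 = 7 \cdot 2 - 12$ and is $1$-reducible with $0 \leq 4 \cdot 2 - 8$ atom creations.

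For the inductive step $m \geq 3$, I apply Lemma~\ref{red uv} to $u_1 u_2$, which yields factorizations $u_1 = \alpha g' \sigma_1$ and $u_2 = \sigma_2 h' \beta$ with $g', h' \in G$, overlap $\sigma_1 \sigma_2 =_H c \in A \cup B$, and Britton-reduced form $w = \alpha \gamma \beta$ of $u_1 u_2$, where $\gamma = g' c h' \in G$ is a single $G$-letter (with $\sigma_1 = \sigma_2 = 1$ and $c = 1$ when no pin cancels). If $w = 1$, I handle $(u_1, u_2)$ by the base case and invoke the inductive hypothesis on $(u_3, \ldots, u_m)$; otherwise, I invoke the inductive hypothesis on the $(m-1)$-tuple $(w, u_3, \ldots, u_m)$, obtaining a $1$-reducible refinement $(v_1, \ldots, v_n, r_1, \ldots, r_l)$ of length at most $7(m-1) - 12 = 7m - 19$ with a reduction using at most $4m - 12$ atom creations, where $v_1 \cdots v_n = w$.

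Since $\gamma$ is a single $G$-letter of $w$, it appears inside a unique $v_j$; factor $v_j = v_{j,1} \gamma v_{j,2}$ (where $v_{j,1}, v_{j,2} \in \BR(H)$, possibly empty) and lift to the refinement
\[
(v_1, \ldots, v_{j-1}, v_{j,1}, g', \sigma_1, \sigma_2, h', v_{j,2}, v_{j+1}, \ldots, v_n, r_1, \ldots, r_l)
\]
of $(u_1, u_2, \ldots, u_m)$. The new reduction begins by collapsing the inserted block: the generalized cancellation $(\sigma_1, 1, \sigma_2) \to (c)$ (valid since $\sigma_1, \sigma_2$ contain $t^{\pm 1}$ and their product is $c \in A \cup B$; skipped if $\sigma_1 = \sigma_2 = 1$), followed by the atom creations $(g', c) \to (g'c)$ and $(g'c, h') \to (\gamma)$, restoring $\gamma$ at its original position inside $v_j$ using at most two atom creations.

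The main technical obstacle is to complete the reduction by replaying the IH reduction on the split triple $(v_{j,1}, \gamma, v_{j,2})$ in place of the single entry $v_j$. If $v_j \in \Gamma$ then $v_{j,1} = v_{j,2} = 1$ and $v_j = \gamma$, so the IH reduction runs verbatim; otherwise $v_j$ is consumed in the IH via a generalized cancellation $(v_j, a, x) \to (b)$ (or its left-symmetric variant), and Britton's lemma applied to the identity $v_{j,1} \gamma v_{j,2} a x =_H b$ tells us how the pins of $v_j$ pair with those of $x$. Using this pairing, I further refine $x$ into at most three pieces so that (i) the innermost pins of $v_{j,2}$ cancel with the matching pins of the first piece in a single generalized cancellation producing some $c' \in A \cup B$, (ii) at most two atom creations merge $\gamma$, $c'$, and (if present) the intermediate $G$-letter extracted from $x$ into an $A \cup B$-element $c^*$, and (iii) a final generalized cancellation with $v_{j,1}$ and the last piece of $x$ yields $b$ at the correct position, after which the rest of the IH reduction proceeds unchanged. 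The inductive step thereby adds at most seven entries (five from splitting $v_j$, two from splitting $x$) and at most four atom creations (two forming $\gamma$, two in the adaptation), so the refinement has length at most $7(m-1) - 12 + 7 = 7m - 12$ and the total number of atom creations is at most $4(m-1) - 8 + 4 = 4m - 8$.
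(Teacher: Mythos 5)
Your proposal is correct and follows essentially the same route as the paper's proof: induction on $m$ using Lemma~\ref{red uv} to merge $u_1u_2$ into a Britton-reduced word, locating the merged $G$-letter inside one piece $v_j$ of the inductively obtained refinement, and then splitting the generalized-cancellation partner of $v_j$ into three pieces via Lemma~\ref{boat} so that the old cancellation is simulated by two generalized cancellations and two atom creations. Your bookkeeping (at most $7$ extra tuple entries and $4$ extra atom creations per induction step) matches the paper's, so nothing further is needed.
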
 

\begin{proof}
We prove the lemma by induction on $m$. The induction is similar to the one of Lemma~\ref{lemma-reduction-free-product}.
The case $m=2$ is trivial (we must have $u_2 = u_1^{-1}$). If $m \geq 3$ then by Lemma~\ref{red uv} we can factorize $u_1$ and $u_2$ in
$(\Gamma \cup \{t,t^{-1}\})^*$ as
$u_1=u'_1 g_1 r$ and $u_2= s g_2 u'_2$ such that $rs=_H c \in A \cup B$, $g_1, g_2 \in G$ 
and $u_1u_2 =_H  u'_1 g u'_2\in \BR(H)$ for $g =g_1c g_2 \in G$. The words $r$ and $s$ are either both empty (in which case we have $c=1$)
or $r$ starts with some $t^{\varepsilon}$ and $s$ ends with $t^{-\varepsilon}$. 

By induction hypothesis, for the tuple $(u'_1g u'_2, u_3, \dots, u_m)$ there is a $1$-reducible refinement
\begin{equation}\label{seq-HNN}
(v_1, \dots, v_k, u_{3,1}, \dots, u_{3,k_3}, \dots, u_{m,1}, \dots, u_{m,k_m}),
\end{equation}
with $4(m-1)-8$ atom creations, where $k+\sum_{i=3}^m k_i \leq 7(m-1)-12$ and $u'_1gu'_2=v_1\cdots v_k$ in $(\Gamma \cup \{t,t^{-1}\})^*$.
Since $g \in G$, there exists $1\leq i \leq k$, such that $v_i=v_{i,1} g v_{i,2}$, $u'_1 = v_1 \cdots v_{i-1} v_{i,1}$ and $u'_2 = v_{i,2} v_{i+1} \cdots v_k$.
Now we replace $v_i$ by $v_{i,1}, g, v_{i,2}$ in the above refinement~\eqref{seq-HNN}.  
If there exists $u_{j,\ell}$ such that $v_i$ and $u_{j,\ell}$ cancel out in a generalized cancellation step in the $1$-reduction of \eqref{seq-HNN}
then there exist $a,b \in A \cup B$ such that $v_{i,1} g v_{i,2} a u_{j,\ell} = v_i a u_{j,\ell}   =_H b$. The generalized cancellation replaces $v_i,a,u_{j,\ell}$ by $b$.

Recall that $v_i$ and $u_{j,\ell}$ are both Britton-reduced.
By Lemma~\ref{boat} we can factorize $u_{j,\ell}$ in $(\Gamma \cup \{t,t^{-1}\})^*$ as $u_{j,\ell} = w_1 g' w_2$ 
such that there exist connecting elements $a',b' \in A \cup B$ with $v_{i,2} a w_1 =_H a'$, $v_{i,1} b' w_2 =_H b$, and $ga'g' =_G b'$; 
see Figure~\ref{fig-ujl-vi}. This yields the refined tuple
$$
(v_1, \ldots, v_{i-1}, v_{i,1}, g_1, r, s, g_2, v_{i,2}, v_{i+1}, \ldots, v_k, \tilde{u}_{3,1}, \dots, \tilde{u}_{3,k_3}, \dots, \tilde{u}_{m,1}, \dots, \tilde{u}_{m,k_m}),
$$
of $(u_1, u_2, \dots , u_m)$, where $\tilde{u}_{j,\ell} = w_1, g', w_2$ and $\tilde{u}_{p,q} = u_{p,q}$ in all other cases.
The length of this tuple is at most $k + 7 + \sum_{i=3}^m k_i \leq 7m-12$. 
The above tuple is also $1$-reducible: First, $r,s$ is replaced by $c$ in a generalized cancellation step. Then, after at most two atom creations\footnote{Note that each of $c, g_1, g_2$ can be $1$, in which case the number of atom creations is smaller than two.}
we obtain the tuple
$$
(v_1, \ldots, v_{i-1}, v_{i,1}, g, v_{i,2}, v_{i+1}, \ldots, v_k, \tilde{u}_{3,1}, \dots, \tilde{u}_{3,k_3}, \dots, \tilde{u}_{m,1}, \dots, \tilde{u}_{m,k_m}) .
$$
At this point, we can basically apply the fixed reduction of \eqref{seq-HNN}. The generalized cancellation $v_i,a,u_{j,\ell} \to b$
is replaced by the sequence 
$$
v_{i,1}, g, v_{i,2}, a, w_1, g', w_2 \to v_{i,1}, g, a', g', w_2 \to  v_{i,1}, ga', g', w_2 \to  v_{i,1}, b', w_2 \to b
$$
which contains at most two atom creations. Hence, the total number of atom creations is at most $4 + 4(m-1)-8 = 4m-8$.
This concludes the proof of the lemma.
\end{proof}

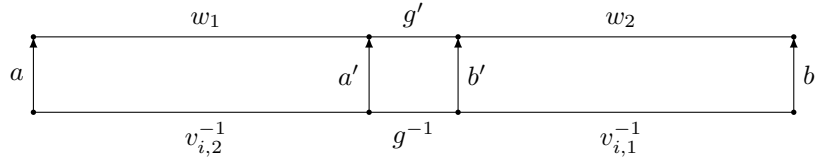
\begin{figure}
\begin{tikzpicture}
\node (A) at (0,0) {};
\node (B) at (0,1) {};
\node (C) at (10,0) {};
\node (D) at (10,1) {};

\tikzset{>=latex}
\draw[->,shorten <= -3.5, shorten >= -3.5] (A) -- (B) node[midway,left] {$a$};
\draw[<-,shorten <= -3.5, shorten >= -3.5] (D) -- (C) node[midway,right] {$b$};

\draw[shorten <= -3.5, shorten >= -3.5] (B) to
node[pos=0.22,above] (E) {$w_1$}
node[pos=0.44] (F) {}
node[pos=0.5,above] (G) {$g'$}
node[pos=0.56] (H) {}
node[pos=0.78,above] (I) {$w_2$}
(D);

\draw[shorten <= -3.5, shorten >= -3.5] (A) to
node[pos=0.22,below] (J) {$v_{i,2}^{-1}$}
node[pos=0.44] (K) {}
node[pos=0.5,below] (L) {$g^{-1}$}
node[pos=0.56] (M) {}
node[pos=0.78,below] (N) {$v_{i,1}^{-1}$}
(C);

\draw[->,shorten <= -3.5, shorten >= -3.5] (K) -- (F) node[midway,left] {$a'$};
\draw[->,shorten <= -3.5, shorten >= -3.5] (M) -- (H) node[midway,right] {$b'$};

 \fill (A) circle (1pt);
 \fill (B) circle (1pt);
 \fill (C) circle (1pt);
 \fill (D) circle (1pt);
 
 \fill (F) circle (1pt);
 \fill (H) circle (1pt);
 \fill (K) circle (1pt);
 \fill (M) circle (1pt);
\end{tikzpicture}
\caption{\label{fig-ujl-vi}The generalized cancellation step for $v_i$ and $u_{j,\ell}=w_1g'w_2$ in the proof of Lemma~\ref{Britton-lemma-reduction}.}
\end{figure}

Now we can prove the next main theorem.

\begin{theorem}\label{HNNbound}
Let $A,B$ be finite subgroups of $G$ and let $\varphi\colon A\to B$ be an isomorphism.
If $G$ is knapsack-semilinear, then the HNN-extension
$H$ of $G$ (with respect to the isomorphism $\varphi$) is knapsack-semilinear as well.
Moreover, we have $\mathsf{K}_{H,\Sigma}(n,m) \le \max\{\mathsf{K}_1, \mathsf{K}_2\}$ with 
\begin{equation*}
\mathsf{K}_1 = \mathsf{K}_{G,\Sigma}(24mn,m) \text{ and } 
\mathsf{K}_2 \le \mathcal{O}(\gamma^{2} m  n^{4})  ,
\end{equation*}
where $\gamma=|A|$.
\end{theorem}

\begin{proof}
We will follow the idea of the proof of Theorems~\ref{thm-main-technical} and~\ref{thm-cor-technical}, respectively. 
We first consider the case where every period $u_i$ is either an atom
or well-behaved and starts with $t^{\pm 1}$.
Again we are going through the six steps. 
For simplicity, we write $\mathsf{K}$ instead of $\mathsf{K}_{G,\Sigma}$.

\medskip
\noindent
{\em Step 1.} This step is carried out in the same way as in 
the proof of Theorem~\ref{thm-main-technical}. 

\medskip
\noindent
{\em Step 2.} Here we can use Lemma~\ref{Britton-lemma-reduction} instead of Lemma~\ref{lemma-reduction},
which yields the upper bound of $14m$ on the number of factors in our refinement of 
$u_1^{x_1} v_1 u_2^{x_2} v_2 \cdots u_m^{x_m} v_m$ (where powers $u_i^{x_i}$ with $i \in N_1$ have been removed). The number of atom creations is at most $8m-8$.

\medskip
\noindent
{\em Step 3.} 
This step is copied from the  proof of Theorem~\ref{thm-main-technical}. 
We obtain for the variables $x_i$ with $i \in N_a$ the semilinear constraint
$(x_i)_{i \in N_a} \in S_a$ where $S_a$ is of magnitude at most
$\mathsf{K}(8m \norm{\E},m)$.

\medskip
\noindent
{\em Step 4.} This step is analogous to the proof of Theorem~\ref{thm-cor-technical}. The only difference is that
the 2-dimensional knapsack instances are produced by the generalized cancellation steps from Definition~\ref{def-reduction-HNN}.
We arrive at a statement of the following form:
there exist integers $x_{i,j} \geq 0$ ($i \in N_{\overline{a}}$, $j \in K_i$) such that 
the following hold:
\begin{enumerate}[(a)]
\item $x_i = c'_i +  \sum_{j \in K_i} x_{i,j}$ for $i \in N_{\overline{a}}$,
\item $a_{i,j} p_{i,j} u_i^{x_{i,j}} s_{i,j} =_H s_{k,\ell}^{-1} (u_k^{-1})^{x_{k,\ell}} p_{k,\ell}^{-1} b_{i,j}$ for all $(i,j,k,\ell) \in M$,
\item $(x_i)_{i \in N_a} \in S_a$.
\end{enumerate}
Here, $K_i \subseteq [1,k_i]$ is a set of size at most $k_i \leq 14m$, 
$M$ is a matching relation (with $(i,j,k,\ell) \in M$ if and only if $(k,\ell,i,j) \in M$), $a_{i,j}, b_{i,j} \in A \cup B$,
and $c'_i \le \mathcal{O}(m \cdot \lambda(\E))$.
Every word $p_{i,j}$ is a suffix of $u_{i,j}$ and every $s_{i,j}$ is a prefix of $u_{i,j}$. In particular, $p_{i,j}$ and $s_{i,j}$ have length at most $\lambda(\E)$.

\medskip
\noindent
{\em Step 5.} The remaining two-dimensional knapsack equations from point (b)
are eliminated with Lemma~\ref{2dim}. Every equation 
$$
a_{i,j} p_{i,j} u_i^{x_{i,j}} s_{i,j} =_H s_{k,\ell}^{-1} (u_k^{-1})^{x_{k,\ell}} p_{k,\ell}^{-1} b_{i,j}
$$
can be nondeterministically replaced by a semilinear constraint for $x_{i,j}$ and $x_{k,\ell}$ of magnitude $\mathcal{O}(\gamma^{2} \lambda(\E)^{4})$.
By substituting these semilinear constraints in the above equations (a) for the $x_i$ (as we did in the proof of  Theorem~\ref{thm-main-technical}),
we obtain for the variables $x_i$ ($i \in N_{\overline{a}}$) a semilinear constraint of magnitude 
$\mathcal{O}(m \cdot \gamma^{2} \cdot \lambda(\E)^{4})$.
This leads to a semilinear representation for $\Sol_{H}(\E)$ of magnitude at most
$\max\{ \mathsf{K}_1, \mathsf{K}_2 \}$, where 
\begin{equation*}
\mathsf{K}_1 = \mathsf{K}(8m \norm{\E},m) \text{ and } 
\mathsf{K}_2 \le \mathcal{O}(m \cdot \gamma^{2} \cdot \norm{\E}^{4}) .
\end{equation*}
{\em Step 6.} For the preprocessing we can apply Lemma~\ref{Britton-reduced-powers} to each period $u_i$.
Hence, we  just have to replace $\norm{\E}$ by $3 \norm{\E}$ in the above bounds, which yields
the statement of the theorem.
\end{proof}

The following result is obtained analogously to Theorem~\ref{thm:free-product-NP-reduction}.
\begin{theorem} \label{thm:HNN-NP-reduction}
The knapsack problem for $\langle G, t \mid t^{-1}at=\varphi(a)~(a\in A)\rangle$ (with $A$ finite) is nondeterministically polynomial time reducible to the knapsack problem for $G$.
\end{theorem}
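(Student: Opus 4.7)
The proof proceeds in complete parallel to that of Theorem~\ref{thm:free-product-NP-reduction}. First I would make explicit, by inspection of the proof of Theorem~\ref{HNNbound}, a structural analogue of Remark~\ref{rem:solution-set-GP} for HNN-extensions: for a knapsack expression $\E$ over $H$ whose periods are either atomic or well-behaved and begin with $t^{\pm 1}$ (the situation produced by Lemma~\ref{Britton-reduced-powers}), the solution set can be decomposed as
$$\Sol_H(\E) = \bigcup_{i=1}^p \bigoplus_{j=1}^{q_i} \Sol_G(\E_{i,j}) \oplus L_i,$$
where each $\E_{i,j}$ is a knapsack expression over $G$ of size at most $\mathcal{O}(m\norm{\E})$ and degree at most $m$ (read off from step~3 of the proof of Theorem~\ref{HNNbound}, noting that the atoms of $H$ not involving $t$ are elements of $G$), and each $L_i$ is semilinear of magnitude $\mathcal{O}(\gamma^{2} m \norm{\E}^{4})$ (read off from step~5, via Lemma~\ref{2dim}). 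The variable sets $X_{\E_{i,j}}$ ($1 \leq j \leq q_i$) form a partition of $X_a$, and the remaining variables $X_{\overline a}$ are constrained by the semilinear set $L_i$. Crucially, given a specific branch $i$ (i.e.\ a specific tuple of nondeterministic guesses made in steps~1, 2, 4, and 5), one can compute the expressions $\E_{i,j}$ and a semilinear representation of $L_i$ in polynomial time.

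Next I would describe the nondeterministic polynomial-time reduction. Given an input knapsack expression $\E$ for $H$, the machine first applies the preprocessing from Lemma~\ref{Britton-reduced-powers} period-by-period (polynomial time) to bring $\E$ into the form above, at most tripling $\norm{\E}$. It then nondeterministically guesses the branch index $i$; this corresponds to guessing the Britton factorizations, the specific $1$-reducing reduction provided by Lemma~\ref{Britton-lemma-reduction}, the per-power factorizations produced in step~4, and the linear components selected via Lemma~\ref{2dim} in step~5. For the guessed $i$, the machine computes in polynomial time the knapsack expressions $\E_{i,1},\ldots,\E_{i,q_i}$ over $G$ and a semilinear representation of $L_i$. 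If $L_i = \emptyset$ (detectable in polynomial time from the representation) the branch is aborted, e.g.\ by outputting a fixed unsatisfiable knapsack instance over $G$ such as $g^{x} = g'$ with $g$ torsion-free and $g' \neq_G 1$ not a power of $g$. Otherwise, since the variable sets of the $\E_{i,j}$ are pairwise disjoint, one has $\bigoplus_j \Sol_G(\E_{i,j}) \neq \emptyset$ iff the single concatenated knapsack expression $\E_i := \E_{i,1}\E_{i,2}\cdots \E_{i,q_i}$ over $G$ satisfies $\Sol_G(\E_i) \neq \emptyset$; the machine outputs $\E_i$, which has polynomial size in $\norm{\E}$.

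For correctness, $\Sol_H(\E) \neq \emptyset$ iff some summand $\bigoplus_j \Sol_G(\E_{i,j}) \oplus L_i$ is non-empty, iff there is a branch $i$ with $L_i \neq \emptyset$ and $\Sol_G(\E_i) \neq \emptyset$, iff on some branch the machine outputs a knapsack instance over $G$ lying in the knapsack language of $G$. This matches the definition of nondeterministic polynomial-time reduction stated just before Theorem~\ref{thm:free-product-NP-reduction}. The main subtlety — and the step I would want to check carefully — is extracting the structural decomposition from the existing proof of Theorem~\ref{HNNbound}: one has to verify that, on each individual branch, the semilinear set $L_i$ (obtained by combining the constraint $(x_i)_{i \in N_a} \in S_a$ with the substitutions coming from the two-dimensional Lemma~\ref{2dim} applications) is genuinely decoupled from the $\Sol_G(\E_{i,j})$ factors, i.e.\ that $L_i \subseteq \mathbb{N}^{X_{\overline a}}$ involves only the non-atomic period variables. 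This is precisely what the proof of Theorem~\ref{HNNbound} delivers, so no additional argument beyond a careful rereading is required.
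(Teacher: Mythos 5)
Your overall route is the paper's route: the paper proves this theorem "analogously to Theorem~\ref{thm:free-product-NP-reduction}", i.e.\ it extracts from the proof of Theorem~\ref{HNNbound} exactly the HNN analogue of Remark~\ref{rem:solution-set-GP} (a branchwise decomposition $\Sol_H(\E)=\bigcup_i \bigoplus_j \Sol_G(\E_{i,j})\oplus L_i$ with the $\E_{i,j}$ and a representation of $L_i$ computable in polynomial time once the branch is guessed), and your first two paragraphs reproduce this faithfully, including the correct observation that $L_i$ only involves the variables of non-atomic periods.

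However, your last reduction step contains a genuine error: you replace the family $\E_{i,1},\ldots,\E_{i,q_i}$ by the single concatenated expression $\E_i=\E_{i,1}\cdots\E_{i,q_i}$ and claim that, because the variable sets are disjoint, $\Sol_G(\E_i)\neq\emptyset$ iff every $\Sol_G(\E_{i,j})\neq\emptyset$. Only one direction is true. Disjointness of variables does not prevent cancellation in $G$ across the factors: for $G=\Z=\langle a\rangle$, take $\E_{i,1}=a^{x}a$ (unsolvable over $\N$) and $\E_{i,2}=a^{y}a^{-1}a^{-1}$ (solvable); the concatenation $a^{x}a\,a^{y}a^{-1}a^{-1}$ is solvable ($x+y=1$), while $\Sol_G(\E_{i,1})\oplus\Sol_G(\E_{i,2})=\emptyset$. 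Since the local expressions $\E_{i,j}$ produced in step~3 of the proof of Theorem~\ref{HNNbound} are essentially arbitrary knapsack expressions over $G$ (they encode identities among atoms coming from atom creations), nothing rules out such cross-cancellation, so your machine could output a solvable $G$-instance on a branch whose actual contribution to $\Sol_H(\E)$ is empty, i.e.\ the reduction can produce false positives. The fix is to do what the paper (implicitly) does in Theorem~\ref{thm:free-product-NP-reduction}: keep the instances separate and output the whole list $\E_{i,1},\ldots,\E_{i,q_i}$, so that acceptance on a branch means that \emph{all} of them are solvable (a conjunction of independent knapsack instances over $G$); there is no generic way to merge several independent knapsack instances over $G$ into a single one, and no argument for it is given. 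Your abort mechanism is also slightly over-engineered (it presupposes a torsion-free element of infinite order not conjugation-related to $g'$); outputting any fixed word not in the target language suffices.
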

The consequence of Theorem~\ref{thm:HNN-NP-reduction} that solvability of the
knapsack problem in $\mathsf{NP}$ is passed on from $G$ to $H$ was shown using
different methods in the extended abstract~\cite{LohreyZetzsche2016a}.

\subsection{Amalgamated products preserve knapsack semilinearity}

Using our results for free products and HNN-extensions, we can easily deal with
amalgamated products. For
$i\in\{1,2\}$, let $G_i=\langle \Sigma_i \mid R_i\rangle$ be a finitely
generated group with $\Sigma_1 \cap \Sigma_2 = \emptyset$
and let $A$ be a finite group that is embedded in each $G_i$ via the injective morphism $\varphi_i\colon A\to G_i$ for
$i\in\{1,2\}$.
Then, the \emph{amalgamated product with
identified subgroup $A$} is the group
\[ \langle \Sigma_1\uplus \Sigma_2 \mid R_1\uplus R_2\cup \{\varphi_1(a)=\varphi_2(a) \mid a\in A\}\rangle. \]
This group is usually written as
$$
 \langle G_1*G_2 \mid \varphi_1(a)=\varphi_2(a)~(a\in A)\rangle.
 $$
 or just $G_1 *_A G_2$.
Note that the amalgamated product depends on the morphisms $\varphi_i$,
although they are omitted in the notation $G_1 *_A G_2$.  

From Theorem~\ref{HNNbound}, we can easily deduce a similar result for amalgamated products:

\begin{theorem}
Let $G_1$ and $G_2$ be finitely generated groups with a common subgroup $A$. Let $\mathsf{K}(n,m)$ be the 
pointwise maximum of the functions $\mathsf{K}_{G_1,\Sigma_1}$ and $\mathsf{K}_{G_2,\Sigma_2}$. Furthermore, let $\gamma=|A|$ and let $G$ be the amalgamated product $G_1 *_A G_2$. 
Then with $\Sigma=\Sigma_1\cup \Sigma_2$ we have $\mathsf{K}_{G,\Sigma}(n,m) \le \max\{\mathsf{K}_1, \mathsf{K}_2, \mathsf{K}_3\}$ where
\begin{equation*}
\mathsf{K}_1 = \mathsf{K}_{G,\Sigma}(144m^2n,m) ,
\mathsf{K}_2 \le \mathcal{O}(m^5 n^4)  \text{ and } 
\mathsf{K}_3 \le \mathcal{O}(m  \cdot \gamma^{2} \cdot n^{4}) .
\end{equation*}
\end{theorem}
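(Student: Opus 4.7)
The plan is to reduce to the previously treated HNN-extension case via the classical embedding of an amalgamated product into an HNN-extension of the corresponding free product, and then chain the bounds from Theorem~\ref{HNNbound} and Theorem~\ref{thm-cor-technical}. Let $H = \langle G_1 * G_2, t \mid t^{-1}\varphi_1(a)t = \varphi_2(a), a\in A\rangle$, an HNN-extension of the free product $G_1 * G_2$ with finite associated subgroups $\varphi_1(A), \varphi_2(A)$ of size $\gamma$. Consider the map $\iota\colon G_1 *_A G_2 \to H$ defined by $\iota(g) = g$ for $g\in G_1$ and $\iota(g) = tgt^{-1}$ for $g\in G_2$. It is well-defined since $\iota(\varphi_1(a)) = \varphi_1(a) = t\varphi_2(a)t^{-1} = \iota(\varphi_2(a))$ in $H$ by the HNN-relation, and injective by the standard normal-form argument (Britton's lemma for $H$ together with the normal form for amalgamated products).

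Given a knapsack expression $\E = u_1^{x_1}v_1\cdots u_k^{x_k}v_k$ over $\Sigma = \Sigma_1\cup\Sigma_2$ with $\norm{\E}\le n$ and $\deg(\E)\le m$, I translate it to a knapsack expression $\iota(\E)$ over the generating set $\Sigma_H = \Sigma\cup\{t,t^{-1}\}$ of $H$ by replacing, inside every $u_i$ and $v_i$, each letter $a\in\Sigma_1$ by itself and each letter $a\in\Sigma_2$ by the length-$3$ word $tat^{-1}$. Clearly $\norm{\iota(\E)}\le 3n$ and $\deg(\iota(\E)) = \deg(\E) \le m$. Since $\iota$ is a homomorphism, $\sigma(\iota(\E)) = \iota(\sigma(\E))$ for every valuation $\sigma$, and since $\iota$ is injective, $\sigma(\E) =_G 1$ iff $\sigma(\iota(\E)) =_H 1$. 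Hence $\Sol_G(\E) = \Sol_H(\iota(\E))$ as subsets of $\N^k$, so $\magn(\Sol_G(\E)) \le \mathsf{K}_{H,\Sigma_H}(3n,m)$.

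With the reduction in place, the bound is obtained by substitution. Theorem~\ref{HNNbound} applied to $H$ over $G_1 * G_2$ yields
\[
\mathsf{K}_{H,\Sigma_H}(3n,m) \le \max\bigl\{\mathsf{K}_{G_1*G_2,\Sigma}(72mn,m),\ \mathcal{O}(\gamma^2 m n^4)\bigr\},
\]
and Theorem~\ref{thm-cor-technical} applied to the free product then gives
\[
\mathsf{K}_{G_1*G_2,\Sigma}(72mn,m) \le \max\bigl\{\mathsf{K}(432\,m^2 n,m),\ \mathcal{O}(m^5 n^4)\bigr\},
\]
where $\mathsf{K}$ is the pointwise maximum of $\mathsf{K}_{G_1,\Sigma_1}$ and $\mathsf{K}_{G_2,\Sigma_2}$ as in the statement. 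Combining these two inequalities produces the three-way maximum claimed in the theorem (the constant $432$ here vs.\ the stated $36$ is a matter of book-keeping; the $\mathsf{K}_1$-term must be read as $\mathsf{K}(\cdots)$, not $\mathsf{K}_{G,\Sigma}(\cdots)$, since the latter reading would be vacuous).

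The argument carries essentially no new technical content: it is a pure reduction. The only points that require care are (i) setting up the embedding $\iota$ and verifying well-definedness and injectivity from the HNN presentation, and (ii) controlling the length inflation by the factor $3$ under the translation. No single step is a real obstacle; the final bound falls out by routine substitution into the two previously established transfer theorems.
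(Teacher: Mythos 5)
Your proposal is correct and takes essentially the same route as the paper: embed $G_1 *_A G_2$ into the HNN-extension $\langle G_1 * G_2, t \mid t^{-1}\varphi_1(a)t=\varphi_2(a)\ (a\in A)\rangle$ and chain Theorem~\ref{thm-cor-technical} with Theorem~\ref{HNNbound}. You are in fact more careful than the paper about the bookkeeping, since the paper's own proof glides over the factor-$3$ length increase caused by the embedding (and applies Theorem~\ref{HNNbound} with $6mn$ rather than the stated $24mn$), which is why you get $\mathsf{K}(432m^2n,m)$ instead of $36m^2n$; your reading of $\mathsf{K}_1$ as $\mathsf{K}(\cdots)$ rather than $\mathsf{K}_{G,\Sigma}(\cdots)$ matches the paper's own proof, where the latter is evidently a typo.
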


\begin{proof}
For the proof we will make use of the previous theorem and Theorem~\ref{thm-cor-technical} for free products.

It is well-known~\cite[Theorem~2.6, p.~187]{LySch77} that $G_1 *_A G_2$ can be
embedded into the HNN-extension
\[ H = \langle G_1*G_2, t \mid t^{-1}\varphi_1(a)t=\varphi_2(a)~(a\in A) \rangle \]
by the morphism $\Phi\colon G_1 *_A G_2\to H$ with
\[ \Phi(g)=\begin{cases} t^{-1}gt & \text{if $g\in G_1$} \\ g & \text{if $g\in G_2$}. \end{cases} \]
Obviously we have $\mathsf{K}_{G,\Sigma}(n,m)\leq \mathsf{K}_{H,\Sigma}(n,m)$.
Hence we can calculate the bound by first getting the bound for the free product $G_1*G_2$ and then proceeding with the HNN-extension.
Theorem~\ref{thm-cor-technical} tells us that
$\mathsf{J}(n,m)=\mathsf{K}_{G_1*G_2,\Sigma}(n,m)\leq \max \{ \mathsf{K}'_1,\mathsf{K}'_2 \}$, where $\mathsf{K}'_1=\mathsf{K}(6mn,m)$ and $\mathsf{K}'_2 \le \mathcal{O}(m n^4)$. To obtain $\mathsf{K}_{H,\Sigma}(n,m)$, we make use of Theorem~\ref{HNNbound}. We have
$\mathsf{K}_{H,\Sigma}(n,m) \leq \max \{ \mathsf{J}_1,\mathsf{J}_2 \}$,
where $\mathsf{J}_1=\mathsf{J}(24mn,m)$ and $\mathsf{J}_2\leq \mathcal{O}(\gamma^{2} m n^{4})$.
Since the function $\mathsf{J}(n,m)$ appears in $\mathsf{J}_1$, we have to substitute by what we calculated before. More precisely we have to make the substitution $n \mapsto 24mn$ for the values $\mathsf{K}'_1$ and $\mathsf{K}'_2$.
This yields 
$\mathsf{K}_{H,\Sigma}(n,m) \leq \max \{ \mathsf{K}_1,\mathsf{K}_2, \mathsf{K}_3 \}$, where
\begin{align*}
\mathsf{K}_1 &= \mathsf{K}(144m^2n,m) ,\\
\mathsf{K}_2 &\leq \mathcal{O}(m^5 n^4) ,\\
\mathsf{K}_3 &= \mathsf{J}_2 \leq \mathcal{O}(\gamma^{2} m n^{4}) .
\end{align*}
This finishes the proof of the theorem.
\end{proof}
From Theorems~\ref{thm:free-product-NP-reduction} and \ref{thm:HNN-NP-reduction} 
and the above embedding of $G_1 *_A G_2$ in $\langle G_1*G_2, t \mid t^{-1}\varphi_1(a)t=\varphi_2(a)~(a\in A) \rangle$
we obtain:
\begin{theorem}\label{thm:amalgamated-product-NP-reduction}
The knapsack problem for $G_1*_A G_2$ (with $A$ finite) is nondeter\-ministically polynomial time reducible to the knapsack problems for $G_1$ and $G_2$.
\end{theorem}
As before, the consequence of Theorem~\ref{thm:amalgamated-product-NP-reduction} that
solvability of the knapsack problem in $\mathsf{NP}$ is passed on from $G_i$
for $i=1,2$ to the amalgamated product $G_1 *_A G_2$ was shown using different
methods in the extended abstract~\cite{LohreyZetzsche2016a}.

\section{Part 3: Knapsack in finite extensions}

We say that $H$ is a \emph{finite extension of $G$} if $G$ is a finite-index
subgroup of $H$. 

\begin{theorem} \label{thm-finite-index}
Let $G$ be a finitely generated group with a finite symmetric generating set $\Sigma$ and let $H$ be a finite extension of $G$ (hence, it is finitely generated too)
with the finite symmetric generating set $\Sigma' = \Sigma \cup (C \setminus \{1\}) \cup (C \setminus \{1\})^{-1}$, where $C$ is a set of coset representatives with $1 \in C$.
Let $\ell = |C|$ be the index of  $G$ in $H$.
 If $G$ is knapsack-semilinear then $H$ is knapsack-semilinear too and we have the bounds
\begin{eqnarray}
\mathsf{E}_{H,\Sigma'}(n,m) &\leq & \ell \cdot \mathsf{E}_{G,\Sigma}(\mathcal{O}(\ell^2 n),m) +2\ell, \label{eq-finite-index-E} \\
\mathsf{K}_{H,\Sigma'}(n,m) & \leq & \ell \cdot \mathsf{K}_{G,\Sigma}(\mathcal{O}(\ell^2 n),m) +2\ell \label{eq-finite-index-K}.
\end{eqnarray}
\end{theorem}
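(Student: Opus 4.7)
The plan is to reduce an $H$-exponent equation to a $G$-exponent equation by guessing the trajectory of right cosets taken by a hypothetical solution. For every $h\in H$ and $c\in C$ the element $ch$ factors uniquely as $ch=\tau(c,h)\cdot (c{\cdot}h)$ with $\tau(c,h)\in G$ and $c{\cdot}h\in C$ the right coset representative of $Gch$; write $\rho(h)$ for the induced permutation of $C$. A standard Reidemeister--Schreier argument yields a constant $M$ depending only on the fixed data $G,H,\Sigma,\Sigma'$ such that $|g|_\Sigma\le M\cdot|g|_{\Sigma'}$ for every $g\in G$.

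I first treat a knapsack expression $\E=u_1^{x_1}v_1\cdots u_k^{x_k}v_k$ of length $n$ and degree $k\le m$. The key observation is that for each $c\in C$ the orbit of $c$ under $\rho(u_i)$ has size $q_i\le l$. So I would branch nondeterministically over all sequences $(c_0,c_{1/2},c_1,\ldots,c_{k-1/2},c_k)$ of coset representatives with $c_0=c_k=1$, $c_{i-1/2}$ in the orbit of $c_{i-1}$ under $\rho(u_i)$, and $c_i=c_{i-1/2}\cdot v_i$. For each such (finite) guess, let $r_i\in\{0,\ldots,q_i-1\}$ be the unique residue with $c_{i-1}\cdot u_i^{r_i}=c_{i-1/2}$ and substitute $x_i=q_iy_i+r_i$.

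Since $u_i^{q_i}$ stabilizes the right coset $Gc_{i-1}$, the element $\tilde g_i:=c_{i-1}u_i^{q_i}c_{i-1}^{-1}$ lies in $G$, and a short computation gives $u_i^{x_i}v_i=c_{i-1}^{-1}\,\tilde g_i^{y_i}\,h_i\,c_i$ with $h_i:=c_{i-1}u_i^{r_i}v_ic_i^{-1}\in G$. Multiplying over $i$, the $c_i^{\pm 1}$'s telescope (using $c_0=c_k=1$) to produce the $G$-knapsack expression
\[ \E'(y_1,\ldots,y_k) \;=\; \tilde g_1^{y_1}\,h_1\,\tilde g_2^{y_2}\,h_2\,\cdots\,\tilde g_k^{y_k}\,h_k, \]
of degree $k\le m$ and $\Sigma$-length at most $M\sum_{i=1}^{k}(4+2l|u_i|_{\Sigma'}+|v_i|_{\Sigma'})=\mathcal{O}(l^2 n)$. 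The tuples $(x_1,\ldots,x_k)$ arising from a consistent guess are precisely the image of $\Sol_G(\E')$ under the coordinatewise affine map $y_i\mapsto q_iy_i+r_i$, so $\Sol_H(\E)$ is the finite union of such images. Since $q_i,r_i\le l$, this scales magnitude by a factor of at most $l$ and shifts by at most $l$, establishing \eqref{eq-finite-index-K} (and hence knapsack-semilinearity of $H$).

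For a general exponent expression with repeated variables the same construction produces $\E'$ as a $G$-knapsack expression in $k$ fresh variables $y_1,\ldots,y_k$, and the consistency between occurrences of the same variable is captured by the linear identifications $q_iy_i+r_i=q_{i'}y_{i'}+r_{i'}$. Since $\mathsf{E}_{G,\Sigma}\ge\mathsf{K}_{G,\Sigma}$ and these identifications form a semilinear constraint of magnitude bounded by $l$, incorporating them and projecting onto one representative per distinct variable yields \eqref{eq-finite-index-E}. The main obstacle I foresee is the telescoping step: the cancellation of the $c_i^{\pm 1}$'s relies crucially on $q_i$ being the \emph{exact} orbit size of $c_{i-1}$ under $\rho(u_i)$; replacing $q_i$ by any larger common period (for instance $\mathrm{lcm}(q_1,\ldots,q_k)$) would make $\E'$ exponentially long in $l$ and destroy the polynomial length bound.
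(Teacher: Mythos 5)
Your treatment of the knapsack case \eqref{eq-finite-index-K} is correct and follows essentially the same route as the paper: guess the cosets of the prefixes, turn each power $u_i^{x_i}$ into a conjugated $G$-power $(c_{i-1}u_i^{q_i}c_{i-1}^{-1})^{y_i}$ plus a bounded offset, rewrite over $\Sigma$ using the fixed Schreier data, and recover $\Sol_H(\E)$ as a finite union of images of the sets $\Sol_G(\E')$ under coordinatewise affine maps with coefficients and offsets at most $l$, which scales magnitude as required. Your use of the fact that right multiplication by $u_i$ permutes the right cosets (so the coset of $c_{i-1}u_i^{x}$ is purely periodic in $x$ with period $q_i\le l$) is a mild simplification: the paper works with a self-map $f_i$ of $C$ without exploiting bijectivity, hence first guesses which variables take values below $l$ and writes exponents as $l+k_iy+r_i$, which is what produces its $\mathcal{O}(l^2 n)$; your version gives $\mathcal{O}(ln)$ and stays within the stated bound.

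The gap is in your treatment of \eqref{eq-finite-index-E}. For an exponent expression with repeated variables you pass to fresh variables $y_1,\ldots,y_k$ and propose to ``incorporate'' the identifications $q_iy_i+r_i=q_{i'}y_{i'}+r_{i'}$ and then project. But these identifications are external to the group equation: concretely you would have to intersect $\Sol_G(\E')$ with a semilinear set encoding them, and magnitude is not preserved under intersection of semilinear sets -- the only general bound (Proposition~\ref{intersection-semilinear-sets}) is exponential in the dimension, which destroys the claimed bound $l\cdot\mathsf{E}_{G,\Sigma}(\mathcal{O}(l^2 n),m)+2l$. The remark that $\mathsf{E}_{G,\Sigma}\ge\mathsf{K}_{G,\Sigma}$ does not help, since the coupling between $y_i$ and $y_{i'}$ is not itself expressed by an exponent equation over $G$ of controlled length. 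The paper instead keeps the original (possibly repeated) variables, so that $e'$ is again an exponent expression over $G$ of degree $m$ and the magnitude bound comes directly from $\mathsf{E}_{G,\Sigma}$, via the correspondence $\sigma(x_i)=k_i\sigma'(x_i)+(l+r_i)$. As your own closing remark shows, the crux is that different occurrences of the same variable may come with different periods $q_i$ and offsets $r_i$, and synchronizing them (without an exponential blow-up such as replacing the $q_i$ by their least common multiple) is exactly what your argument leaves open; in any approach this synchronization needs an explicit argument, and without it \eqref{eq-finite-index-E} is not established by your proposal as written.
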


\begin{proof}
Suppose we are given an exponent expression
\begin{equation}
e=u_1^{x_1}v_1\cdots u_m^{x_m}v_m \label{fi:input}
\end{equation}
in $H$ where the $u_i$ and $v_i$ are words over $\Sigma'$.
Let $n$ be the length of $e$.
As a first step, we guess which of the variables $x_i$ assume a value smaller
than $\ell$.  For those that do, we can guess the value and merge the resulting power with 
the $v_i$ on the right. This increases the size of the instance by at most a factor
of $\ell$, which is a constant. 
At the end we will compensate this by applying the substitution $n\mapsto \ell n$. 
Hence, from now on, we only look for $H$-solutions $\sigma$ to $e=1$
where $\sigma(x_i) \ge \ell$ for $1\le i\le m$.

Next we guess the cosets of the prefixes of $u_1^{x_1}v_1\cdots u_m^{x_m}v_m$, i.e., 
we guess coset representatives $c_1,d_1,\ldots,c_{m-1},d_{m-1},c_m \in C$
and restrict to $H$-solutions $\sigma$ to $e=1$ such that $u_1^{\sigma(x_1)}v_1\cdots u_i^{\sigma(x_i)}\in Gc_i$
and $u_1^{\sigma(x_1)}v_1\cdots u_i^{\sigma(x_i)}v_i\in Gd_i$ for $1\le i\le m$. Here, we set $d_m=1$.
Equivalently, we only consider $H$-solutions $\sigma$ where 
$d_{i-1}u_i^{x_i}c_i^{-1}$ and $c_{i}v_id_i^{-1}$
all belong to $G$ for $1\le i\le m$. Here, we set $d_0 = 1$.
We can verify in polynomial time that all
$c_iv_id_i^{-1}$ ($1\le i\le m$) belong to $G$. It remains to describe the set of all $H$-solutions $\sigma$ for $e=1$
that fulfill the following constraints for all $1\le i\le m$:
\begin{equation} \label{constraints-finite-ext}
d_{i-1}u_i^{\sigma(x_i)}c_i^{-1}\in G \text{ and } \sigma(x_i) \ge \ell .
\end{equation}
For $1 \leq i \leq m$ consider the function $f_i\colon C\to C$, which is defined so that for each
$c\in C$, $f_i(c)$ is the unique element $d\in C$ with $cu_id^{-1}\in G$. Note
that we can compute $f_i$ in polynomial time if $G$ and $H$ are fixed groups
(all we need for this is a table that specifies for each $c \in C$ and $a \in \Sigma'$ the coset
representative of $ca$; this is a fixed table that does not depend on the input).
Then there are numbers $1\le k_i\le
\ell$ such that $f_i^{\ell+k_i}(d_{i-1})=f_i^\ell(d_{i-1})$.  With this notation, we
have $d_{i-1}u_i^{z}c_i^{-1}\in G$ if and only if $f_i^{z}(d_{i-1})=c_i$ for all $z \in \N$.

We may assume that there is a $z\ge \ell$ with $f_i^{z}(d_{i-1})=c_i$;
otherwise, there is no $H$-solution for $e=1$ fulfilling the above constraints  \eqref{constraints-finite-ext}
and we have a bad guess. Therefore,
there is a $0\le r_i<k_i$ such that $f_i^{\ell+r_i}(d_{i-1})=c_i$. This means that for all $z \ge \ell$, we
have $d_{i-1}u_i^{z}c_i^{-1}\in G$ if and only if  $f_i^{z}(d_{i-1})=c_i$ if and only if $z=\ell+k_i\cdot
y+r_i$ for some $y\ge 0$.  This allows us to construct an exponent expression
over $G$.

Let $e_i=f_i^\ell(d_{i-1})$. Then, the words $d_{i-1}u_i^\ell e_i^{-1}$,
$e_iu_i^{k_i}e_i^{-1}$, and $e_iu_i^{r_i}c_i^{-1}$ all represent elements  of $G$. Moreover,
for all $y_i \geq 0$ and $z_i=\ell+k_i\cdot y_i+r_i$ ($1 \leq i \leq m$), we have
\begin{align*}
u_1^{z_1}v_1\cdots u_m^{z_m}v_m &= \prod_{i=1}^m d_{i-1} u_i^{\ell+k_i\cdot y_i+r_i}c_i^{-1}c_i v_i d_i^{-1}  \\
&= \prod_{i=1}^m (d_{i-1} u_i^\ell e_i^{-1})(e_iu_i^{k_i} e_i^{-1})^{y_i} (e_i u_i^{r_i} c_i^{-1}c_i v_i d_i^{-1})
\end{align*}
and each word in parentheses represents an element of $G$. 
Hence, we can define the exponent expression
\begin{equation*} 
e' = \prod_{i=1}^m (d_{i-1} u_i^\ell e_i^{-1})(e_iu_i^{k_i} e_i^{-1})^{x_i} (e_i u_i^{r_i} v_i d_i^{-1})
\end{equation*}
over the group $G$. From the above consideration we obtain
\begin{align}
\Sol_H(e) \cap \{ \sigma : X_e \to H \mid \sigma \text{ satisfies the constraints } \eqref{constraints-finite-ext} \} & = \nonumber \\
\{ \sigma \mid \sigma(x_i) = k_i \cdot \sigma'(x_i) + (\ell+r_i) \text{ for some } \sigma' \in \Sol_G(e') \} . & \label{sigma'-sigma}
\end{align}
The set in \eqref{sigma'-sigma} is semilinear by assumption and 
since all $k_i$ and $r_i$ are bounded by $\ell$, we can bound its magnitude by
$\ell \cdot \magn(\Sol_G(e')) + 2\ell$. Moreover, we have $\deg(e') = \deg(e)$. It remains to bound $\norm{e'}$.
For this, we first have to rewrite the words $d_{i-1} u_i^\ell e_i^{-1}$, $e_iu_i^{k_i} e_i^{-1}$, and $e_i u_i^{r_i} v_i d_i^{-1}$
(which represent elements of $G$) into words over $\Sigma$. This increases the length of the words only by a constant factor:
for every $c \in C$ and every generator $a \in \Sigma'$ there exists a fixed word $w_{c,a} \in \Sigma^*$ and $d_{c,a} \in C$ 
such that $ca = w_{c,a} d_{c,a}$ holds in $H$. 
After this rewriting we have $\norm{e'} \leq \mathcal{O}(\ell n)$, which implies
$\magn(\Sol_G(e')) \leq \mathsf{E}_{G,\Sigma}(\mathcal{O}(\ell n),m)$. This yields the bound
$\ell \cdot \mathsf{E}_{G,\Sigma}(\mathcal{O}(\ell n),m) + 2\ell$
for the magnitude of the semilinear set in \eqref{sigma'-sigma}.
Applying the substitution $n \mapsto \ell n$ from the first step finally yields \eqref{eq-finite-index-E}.
The corresponding bound  \eqref{eq-finite-index-K} for 
knapsack expressions can be shown in the same way: Note that in the above transformation of $e$ into $e'$
we do not duplicate variables.
\end{proof}
From the above proof we also obtain the following result (similarly to Theorem~\ref{thm:free-product-NP-reduction}).

\begin{theorem} \label{thm:finite-NP-reduction}
The knapsack problem for a finite extension of a group $G$ is nondeterministically polynomial time reducible to the knapsack problem for $G$.
\end{theorem}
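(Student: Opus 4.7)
The plan is to essentially extract the reduction algorithm from the proof of Theorem~\ref{thm-finite-index}, verify that every nondeterministic step runs in polynomial time, and observe that the resulting instance over $G$ is solvable if and only if the input instance over $H$ is solvable. Since $H$ is a finite extension of $G$ with $\Sigma' = \Sigma \cup (C\setminus\{1\}) \cup (C\setminus\{1\})^{-1}$ for a fixed set of coset representatives $C$ with $1 \in C$, all data depending only on the groups (the coset multiplication table, the mappings $c \mapsto ca$ for $a \in \Sigma'$, the rewriting $ca = w_{c,a} d_{c,a}$ into $\Sigma^*$) is constant and can be used freely by the reduction.

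Given a knapsack expression $\E = u_1^{x_1} v_1 \cdots u_m^{x_m} v_m$ over $\Sigma'$ of length $n$, the reduction proceeds as follows. First, for each $x_i$ the machine nondeterministically decides whether $x_i < l$; if yes, it guesses the actual value (there are at most $l$ choices, constant), substitutes it, and merges the resulting $\Sigma'$-word into the adjacent $v_{i-1}$ or $v_i$. This enlarges the instance by a constant factor $l$. Next, the machine guesses coset representatives $c_1, d_1, \ldots, c_{m-1}, d_{m-1}, c_m \in C$ (again constantly many choices per position, so polynomially many bits overall), sets $d_0 = d_m = 1$, and verifies in polynomial time that $c_i v_i d_i^{-1} \in G$ for all $i$; if not, that branch dies. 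Finally, for each $i$ it computes the function $f_i : C \to C$, $c \mapsto $ (coset representative of $cu_i$), iterates it $l$ times starting from $d_{i-1}$ to obtain $e_i = f_i^l(d_{i-1})$, determines the period $k_i \in [1,l]$ of $f_i$ on the eventual cycle, and searches for $0 \le r_i < k_i$ with $f_i^{l+r_i}(d_{i-1}) = c_i$ (if no such $r_i$ exists, the branch dies). All these values are bounded by $l$ and computed in polynomial time.

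With these data the reduction outputs the knapsack expression
\[
\E' \;=\; \prod_{i=1}^m (d_{i-1} u_i^l e_i^{-1})\,(e_i u_i^{k_i} e_i^{-1})^{x_i}\,(e_i u_i^{r_i} v_i d_i^{-1})
\]
rewritten as a word over $\Sigma$ using the constant table $ca \mapsto w_{c,a} d_{c,a}$. This rewriting blows up the length by only a constant factor, so $\norm{\E'} \leq \mathcal{O}(l n)$, which is polynomial. As established in the proof of Theorem~\ref{thm-finite-index}, under the guessed constraints the set of $H$-solutions of $\E$ is the image of the set of $G$-solutions of $\E'$ under the affine map $x_i \mapsto k_i x_i + (l + r_i)$; in particular $\E$ has an $H$-solution satisfying the guessed constraints if and only if $\E'$ has a $G$-solution. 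Taking the union over all guesses, $\Sol_H(\E) \neq \emptyset$ if and only if some output instance $\E'$ has $\Sol_G(\E') \neq \emptyset$, which is exactly the definition of a nondeterministic polynomial time reduction to the knapsack problem for $G$. No step presents a real obstacle: the only points requiring care are checking that the number of nondeterministic bits used (for small variable values and for the coset guesses) is polynomial and that the size bound $\norm{\E'} \le \mathcal{O}(ln)$ is maintained after rewriting into $\Sigma^*$, both of which follow directly from the finiteness of $C$ and $\Sigma'$.
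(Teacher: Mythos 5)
Your proposal is correct and is essentially the paper's own argument: the paper derives Theorem~\ref{thm:finite-NP-reduction} directly from the construction in the proof of Theorem~\ref{thm-finite-index}, observing exactly as you do that the guesses (small exponent values, coset representatives) and the computation of the data $e_i, k_i, r_i$ and of the output expression $\E'$ over $\Sigma$ are nondeterministic polynomial time, and that solvability is preserved branchwise via the affine correspondence of solutions. Nothing is missing.
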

The consequence of Theorem~\ref{thm:finite-NP-reduction} that solvability of
the knapsack problem in $\mathsf{NP}$ is passed on from $G$ to finite
extensions of $G$ has also appeared in the extended
abstract~\cite{LohreyZetzsche2016a}.

\section{Future work}

In \cite{GanardiKLZ18} it was shown that the class of knapsack-semilinear groups is also closed under restricted wreath products.
It remains to bound the function $\mathsf{K}_{G \wr H}(n,m)$ in terms of the functions $\mathsf{K}_{G}(n,m)$  and $\mathsf{K}_{H}(n,m)$.
Looking into the proof in \cite{GanardiKLZ18}  reveals that the Presburger formula that describes the solution set for 
a knapsack equation over $G \wr H$ involves a quantifier alternation. One therefore has to investigate to what extent quantifier
alternations blow-up the magnitude of semilinear sets.

\medskip

\paragraph{\bf Acknowledgement.}
The first and second author were supported by the DFG research project LO 748/12-1. 

\def\cprime{$'$} \def\cprime{$'$}

\bibliographystyle{plain}
\bibliography{bib}

\def\cprime{$'$} \def\cprime{$'$}
\begin{thebibliography}{10}

\bibitem{Adian2010}
Sergei~I Adian.
\newblock The {Burnside} problem and related topics.
\newblock {\em Russian Mathematical Surveys}, 65(5):805--855, 2010.

\bibitem{AllGre73}
Reg Allenby and Robert~John Gregorac.
\newblock On locally extended residually finite groups.
\newblock In {\em Conference on Group Theory (Univ. Wisconsin-Parkside,
  Kenosha, Wis., 1972)}, number 319 in Lecture Notes in Mathematics, pages
  9--17. Springer, Berlin, 1973.

\bibitem{BabaiBCIL96}
L{\'a}szl{\'o} Babai, Richard Beals, James Cai, G\'{a}bor Ivanyos, and Eugene
  Luks.
\newblock Multiplicative equations over commuting matrices.
\newblock In {\em Proceedings of SODA 1996}, pages 498--507. {ACM/SIAM}, 1996.

\bibitem{Beier}
Simon Beier, Markus Holzer, and Martin Kutrib.
\newblock On the descriptional complexity of operations on semilinear sets.
\newblock In {\em Proceedings of the 15th International Conference on Automata
  and Formal Languages, {AFL} 2017}, volume 252 of {\em {EPTCS}}, pages 41--55,
  2017.

\bibitem{BGZ21}
Pascal Bergstr{\"{a}}{\ss}er, Moses Ganardi, and Georg Zetzsche.
\newblock A characterization of wreath products where knapsack is decidable.
\newblock In {\em Proceedings of the 38th International Symposium on
  Theoretical Aspects of Computer Science, STACS 2021}, volume 187 of {\em
  LIPIcs}, pages 11:1--11:17. Schloss Dagstuhl - Leibniz-Zentrum f{\"{u}}r
  Informatik, 2021.

\bibitem{BeMaSa89}
Alberto Bertoni, Giancarlo Mauri, and Nicoletta Sabadini.
\newblock {M}embership problems for regular and context free trace languages.
\newblock {\em Information and Computation}, 82:135--150, 1989.

\bibitem{Bez98}
Vladimir~Nikolaevich Bezverkhnii.
\newblock On the intersection subgroups {HNN}-groups.
\newblock {\em Fundamentalnaya i Prikladnaya Matematika}, 4(1):199--222, 1998.

\bibitem{BoOt93}
Ronald~V. Book and Friedrich Otto.
\newblock {\em String--Rewriting Systems}.
\newblock Springer, 1993.

\bibitem{chistikov_et_al:LIPIcs:2016:6263}
Dmitry Chistikov and Christoph Haase.
\newblock {The Taming of the Semi-Linear Set}.
\newblock In {\em Proceedings of the 43rd International Colloquium on Automata,
  Languages, and Programming, ICALP 2016}, volume~55 of {\em Leibniz
  International Proceedings in Informatics (LIPIcs)}, pages 128:1--128:13.
  Schloss Dagstuhl--Leibniz-Zentrum f\"ur Informatik, 2016.

\bibitem{Dehn11}
Max Dehn.
\newblock \"{U}ber unendliche diskontinuierliche {Gruppen}.
\newblock {\em Mathematische Annalen}, 71:116--144, 1911.
\newblock In German.

\bibitem{DiDu89}
Warren Dicks and Martin~J. Dunwoody.
\newblock {\em Groups Acting on Graphs}.
\newblock Cambridge University Press, 1989.

\bibitem{Die90lncs}
Voker Diekert.
\newblock {\em Combinatorics on Traces}, volume 454 of {\em Lecture Notes in
  Computer Science}.
\newblock Springer, 1990.

\bibitem{DieRoz95}
Voker Diekert and Grzegorz Rozenberg, editors.
\newblock {\em The Book of Traces}.
\newblock World Scientific, 1995.

\bibitem{DiLo08IJAC}
Volker Diekert and Markus Lohrey.
\newblock Word equations over graph products.
\newblock {\em International Journal of Algebra and Computation},
  18(3):493--533, 2008.

\bibitem{Dro2003}
Carl Droms.
\newblock A complex for right-angled {Coxeter} groups.
\newblock {\em Proceedings of the American Mathematical Society},
  131(8):2305--2311, 2003.

\bibitem{DudTre18}
Fedor~Anatolievich Dudkin and Alexander~Victorovich Treyer.
\newblock Knapsack problem for {Baumslag}--{Solitar} groups.
\newblock {\em Siberian Journal of Pure and Applied Mathematics}, 18(4):43--55,
  2018.

\bibitem{EisenbrandS06}
Friedrich Eisenbrand and Gennady Shmonin.
\newblock Carath{\'{e}}odory bounds for integer cones.
\newblock {\em Operations Research Letters}, 34(5):564--568, 2006.

\bibitem{ElberfeldJT11}
Michael Elberfeld, Andreas Jakoby, and Till Tantau.
\newblock Algorithmic meta theorems for circuit classes of constant and
  logarithmic depth.
\newblock {\em Electronic Colloquium on Computational Complexity {(ECCC)}},
  18:128, 2011.

\bibitem{FigeliusGLZ20}
Michael Figelius, Moses Ganardi, Markus Lohrey, and Georg Zetzsche.
\newblock The complexity of knapsack problems in wreath products.
\newblock In {\em Proceedings of the 47th International Colloquium on Automata,
  Languages, and Programming, ICALP 2020}, volume 168 of {\em LIPIcs}, pages
  126:1--126:18. Schloss Dagstuhl - Leibniz-Zentrum f{\"{u}}r Informatik, 2020.

\bibitem{FrenkelNU16}
Elizaveta Frenkel, Andrey Nikolaev, and Alexander Ushakov.
\newblock Knapsack problems in products of groups.
\newblock {\em Journal of Symbolic Computation}, 74:96--108, 2016.

\bibitem{GanardiKLZ18}
Moses Ganardi, Daniel K{\"{o}}nig, Markus Lohrey, and Georg Zetzsche.
\newblock Knapsack problems for wreath products.
\newblock In {\em Proceedings of 35th Symposium on Theoretical Aspects of
  Computer Science, {STACS} 2018}, volume~96 of {\em LIPIcs}, pages
  32:1--32:13. Schloss Dagstuhl - Leibniz-Zentrum f{\"u}r Informatik, 2018.

\bibitem{GS66}
Seymour Ginsburg and Edwin~Henry Spanier.
\newblock Semigroups, {P}resburger formulas and languages.
\newblock {\em Pacific Journal of Mathematics}, 16(2):285--296, 1966.

\bibitem{Gre90}
Elisabeth~R. Green.
\newblock {\em Graph Products of Groups}.
\newblock PhD thesis, The University of Leeds, 1990.

\bibitem{Haa11}
Christoph Haase.
\newblock {\em On the complexity of model checking counter automata}.
\newblock PhD thesis, University of {Oxford}, St Catherine's College, 2011.

\bibitem{HauLo11}
Niko Haubold and Markus Lohrey.
\newblock Compressed word problems in {HNN}-extensions and amalgamated
  products.
\newblock {\em Theory of Computing Systems}, 49(2):283--305, 2011.

\bibitem{HauLohHau13}
Niko Haubold, Markus Lohrey, and Christian Mathissen.
\newblock Compressed decision problems for graph products of groups and
  applications to (outer) automorphism groups.
\newblock {\em International Journal of Algebra and Computation}, 22(8), 2013.

\bibitem{HiNeNe49}
Graham Higman, Bernhard~H. Neumann, and Hanna Neumann.
\newblock Embedding theorems for groups.
\newblock {\em Journal of the London Mathematical Society. Second Series},
  24:247--254, 1949.

\bibitem{HoltLS19}
Derek~F. Holt, Markus Lohrey, and Saul Schleimer.
\newblock Compressed decision problems in hyperbolic groups.
\newblock In {\em Proceedings of the 36th International Symposium on
  Theoretical Aspects of Computer Science, {STACS} 2019}, volume 126 of {\em
  LIPIcs}, pages 37:1--37:16. Schloss Dagstuhl - Leibniz-Zentrum f{\"u}r
  Informatik, 2019.

\bibitem{Huet1980}
G\'{e}rard Huet.
\newblock Confluent reductions: Abstract properties and applications to term
  rewriting systems: Abstract properties and applications to term rewriting
  systems.
\newblock {\em Journal of the ACM}, 27(4):797--821, 1980.

\bibitem{KaSiSt06}
Mark Kambites, Pedro~V. Silva, and Benjamin Steinberg.
\newblock On the rational subset problem for groups.
\newblock {\em Journal of Algebra}, 309(2):622--639, 2007.

\bibitem{KaWeMy05}
Ilya Kapovich, Richard Weidmann, and Alexei Myasnikov.
\newblock Foldings, graphs of groups and the membership problem.
\newblock {\em International Journal of Algebra and Computation},
  15(1):95--128, 2005.

\bibitem{Karp72}
Richard~M. Karp.
\newblock Reducibility among combinatorial problems.
\newblock In R.~E. Miller and J.~W. Thatcher, editors, {\em Complexity of
  Computer Computations}, pages 85--103. Plenum Press, New York, 1972.

\bibitem{KaSo70}
Abraham Karrass and Donald Solitar.
\newblock The subgroups of a free product of two groups with an amalgamated
  subgroup.
\newblock {\em Transactions of the American Mathematical Society},
  150:227--255, 1970.

\bibitem{KaSo71}
Abraham Karrass and Donald Solitar.
\newblock Subgroups of {HNN} groups and groups with one defining relation.
\newblock {\em Canadian Journal of Mathematics}, 23:627--643, 1971.

\bibitem{KoenigLohreyZetzsche2015a}
Daniel K{\"o}nig, Markus Lohrey, and Georg Zetzsche.
\newblock Knapsack and subset sum problems in nilpotent, polycyclic, and
  co-context-free groups.
\newblock In {\em Algebra and Computer Science}, volume 677 of {\em
  Contemporary Mathematics}, pages 138--153. American Mathematical Society,
  2016.

\bibitem{KuLo05ijac}
Dietrich Kuske and Markus Lohrey.
\newblock Logical aspects of {Cayley}-graphs: the monoid case.
\newblock {\em International Journal of Algebra and Computation},
  16(2):307--340, 2006.

\bibitem{LehSch07}
J{\"o}rg Lehnert and Pascal Schweitzer.
\newblock The co-word problem for the {Higman-Thompson} group is context-free.
\newblock {\em Bulletin of the London Mathematical Society}, 39(2):235--241,
  2007.

\bibitem{LOHREY2019}
Markus Lohrey.
\newblock Knapsack in hyperbolic groups.
\newblock {\em Journal of Algebra}, 2019.

\bibitem{LohSen06icalp}
Markus Lohrey and G{\'e}raud S{\'e}nizergues.
\newblock Theories of {HNN}-extensions and amalgamated products.
\newblock In {\em Proceedings of the 33st International Colloquium on Automata,
  Languages and Programming, ICALP 2006}, volume 4052 of {\em Lecture Notes in
  Computer Science}, pages 681--692. Springer, 2006.

\bibitem{LohSen08}
Markus Lohrey and G{\'e}raud S{\'e}nizergues.
\newblock Rational subsets in {HNN}-extensions and amalgamated products.
\newblock {\em International Journal of Algebra and Computation},
  18(1):111--163, 2008.

\bibitem{LohreyZetzsche2016a}
Markus Lohrey and Georg Zetzsche.
\newblock Knapsack in graph groups, {HNN}-extensions and amalgamated products.
\newblock In {\em Proceedings of the 33rd International Symposium on
  Theoretical Aspects of Computer Science, STACS 2016}, volume~47 of {\em
  Leibniz International Proceedings in Informatics (LIPIcs)}, pages
  50:1--50:14, Dagstuhl, Germany, 2016. Schloss Dagstuhl--Leibniz-Zentrum
  f{\"u}r Informatik.

\bibitem{LohreyZ18}
Markus Lohrey and Georg Zetzsche.
\newblock Knapsack in graph groups.
\newblock {\em Theory of Computing Systems}, 62(1):192--246, 2018.

\bibitem{LohreyZ20}
Markus Lohrey and Georg Zetzsche.
\newblock Knapsack and the power word problem in solvable baumslag-solitar
  groups.
\newblock In {\em Proceedings of the 45th International Symposium on
  Mathematical Foundations of Computer Science, {MFCS} 2020}, volume 170 of
  {\em LIPIcs}, pages 67:1--67:15. Schloss Dagstuhl - Leibniz-Zentrum f{\"{u}}r
  Informatik, 2020.

\bibitem{LySch77}
Roger Lyndon and Paul Schupp.
\newblock {\em Combinatorial Group Theory}.
\newblock Springer, 1977.

\bibitem{MeRa04}
Vasileios Metaftsis and Evagelos Raptis.
\newblock Subgroup separability of graphs of abelian groups.
\newblock {\em Proceedings of the American Mathematical Society},
  132(7):1873--1884, 2004.

\bibitem{MiTr17}
Alexei Mishchenko and Alexander Treier.
\newblock Knapsack problem for nilpotent groups.
\newblock {\em Groups Complexity Cryptology}, 9(1):87--98, 2017.

\bibitem{MyNiUs14}
Alexei Myasnikov, Andrey Nikolaev, and Alexander Ushakov.
\newblock Knapsack problems in groups.
\newblock {\em Mathematics of Computation}, 84:987--1016, 2015.

\bibitem{Stal71}
John~Robert Stallings.
\newblock {\em Group Theory and Three-Dimensional Manifolds}.
\newblock Number~4 in Yale Mathematical Monographs. Yale University Press,
  1971.

\bibitem{To09ipl}
Anthony~Widjaja To.
\newblock Unary finite automata vs. arithmetic progressions.
\newblock {\em Inf. Process. Lett.}, 109(17):1010--1014, 2009.

\end{thebibliography}

\end{document}